\newcommand\reallywidehat[1]{%
\savestack{\tmpbox}{\stretchto{%
  \scaleto{%
    \scalerel*[\widthof{\ensuremath{#1}}]{\kern-.6pt\bigwedge\kern-.6pt}%
    {\rule[-\textheight/2]{1ex}{\textheight}}%WIDTH-LIMITED BIG WEDGE
  }{\textheight}% 
}{0.5ex}}%
\stackon[1pt]{#1}{\tmpbox}%
}
\definecolor{myred}{rgb}{0.75,0,0}
\definecolor{mygreen}{rgb}{0,0.5,0}
\definecolor{myblue}{rgb}{0,0,0.65}
\newtheorem{theorem}{Theorem}[section]
\newtheorem{lemma}[theorem]{Lemma}
\newtheorem{proposition}[theorem]{Proposition}
\newtheorem{conjecture}[theorem]{Conjecture}
\newtheorem{corollary}[theorem]{Corollary}
\newtheorem*{theorem*}{Theorem}
\newtheorem*{proposition*}{Proposition}
\newtheorem*{corollary*}{Corollary}
\theoremstyle{definition}
\newtheorem{defn}[theorem]{Definition}
\newtheorem{assumption}[theorem]{Assumption}
\newtheorem{setup}[theorem]{Setup}
\newtheorem{claim}[theorem]{Claim}
\theoremstyle{remark}
\newtheorem{remark}[theorem]{Remark}
\newcommand\nc{\newcommand}
\nc\on{\operatorname}
\nc\renc{\renewcommand}
\DeclareMathOperator{\acts}{\curvearrowright}
\DeclareMathOperator\GL{GL}
\DeclareMathOperator\End{End}
\DeclareMathOperator\Sh{Sh}
\DeclareMathOperator{\ord}{ord}
\DeclareMathOperator{\Gal}{Gal}
\DeclareMathOperator{\ad}{ad}
\DeclareMathOperator{\der}{der}
\DeclareMathOperator{\uni}{uni}
\DeclareMathOperator{\spl}{split}
\DeclareMathOperator{\imag}{Im}
\DeclareMathOperator{\SC}{sc}
\DeclareMathOperator{\Frob}{Frob}
\DeclareMathOperator{\BB}{BB}
\DeclareMathOperator{\lie}{Lie}
\newcommand{\ShimK}{S_K(G,X)}
\newcommand{\ShimKprime}{S_{K'}(G,X)}
\newcommand{\Shtil}{\widetilde{\Sh}}
\newcommand{\abvar}{\mathcal{A}}
\newcommand{\Fpbar}{\overline{\mathbb{F}}_p}
\newcommand{\Ag}{\mathcal{A}_g}
\newcommand{\Agn}{\Ag[n]}
\newcommand{\Hilbert}{\mathcal{H}}
\newcommand{\Afp}{\mathbb{A}^{f,p}}
\newcommand{\Af}{\mathbb{A}^{f}}
\def\fpbar{{\overline{\mb{F}}_p}}
\def\F{{\mb{F}}}
\newcommand{\defeq}{\vcentcolon=}
\newcommand{\Qbar}{\overline{\mathbb{Q}}}
\newcommand{\Fbar}{\overline{F}}
\newcommand{\Shfbar}{\Sh_{\Fbar}}
\newcommand{\Ql}{\mathbb{Q}_{\ell}}
\DeclareMathOperator{\et}{\textrm{\'et}}
\newcommand{\Q}{\mathbb{Q}}
\newcommand{\C}{\mathbb{C}}
\newcommand{\A}{\mathbb{A}}
\newcommand{\Z}{\mathbb{Z}}
\newcommand{\G}{\mathbb{G}}
\newcommand{\W}{\mathbb{W}}
\newcommand{\cH}{\mathcal{H}}
\title{A characteristic $p$ analogue of the Andr\'e--Pink--Zannier conjecture}
\author{Yeuk Hay Joshua Lam, Ananth N. Shankar}
\date{\today}
\begin{document}
\maketitle

\begin{abstract}
We investigate the analogue of the Andr\'e--Pink--Zannier conjecture in characteristic $p$. Precisely, we prove it for ordinary function field-valued points with big monodromy, in Shimura varieties of Hodge type. We also prove an algebraic characteristic $p$ analogue of Hecke-equidistribution (as formulated by Mazur) for Shimura varieties of Hodge type. We prove our main results by a global and local analysis of prime-to-$p$ Hecke correspondences, and by showing that Weyl special points are abundant in positive characterstic. 

\end{abstract}

\section{Introduction}

The Andr\'e--Pink--Zannier conjecture is an important milestone in the theory of unlikely intersections, and is a special case of the more general Zilber--Pink conjecture. Roughly, it posits that, on a  characteristic zero Shimura variety,  every irreducible component of the Zariski-closure of a subset of a  Hecke orbit is \emph{weakly special}\footnote{See \cite{Moonenlinearity} for the definition of weakly special subvarieties}: these are the totally geodesic subvarieties, and include  Shimura subvarieties. The first version of this conjecture was formulated by Andr\'e in \cite[\S X 4.5, p. 216 (Problem 3)]{Andre}, with other versions formulated and studied by Yafaev, Pink, and Zannier. In a recent work \cite{AndrePinkAbelian}, Rodolphe--Yafaev have proven this conjecture for Shimura varieties of abelian type. 

In this article, we begin the investigation of this conjecture in positive characteristic $p$. As we shall see, some care is needed to even formulate a plausible positive characteristic analogue. Firstly,  \emph{Newton strata} of a Shimura variety are invariant under  Hecke correspondences, but are usually not reductions mod $p$ of weakly special subvarieties. So, we restrict ourselves to the ordinary locus of Shimura varieties. However, we still expect that the naive translation with ordinary finite field-valued points, in place of characteristic zero points, is false. Indeed, we expect that an ordinary irreducible component of the example constructed in \cite[Section 4]{AJunlikely} will intersect the Hecke orbit of any ordinary point in a Zariski-dense subset. We note that the generically ordinary components all have maximal geometric monodromy, and therefore are not weakly special. 

Our main result is in the setting of characteristic $p$ function fields. To that end, let $F = \overline{\F_p(t)}$. Let $\Ag$ be the moduli space of principally polarized abelian varieties of dimension $g$, and let $\ShimK \hookrightarrow \Ag$ be a simple\footnote{By simple, we mean that the Shimura variety is not the  product of two positive-dimensional Shimura varieties. Equivalently, the derived group $G^{\der}$ is a $\Q$-simple group.} Shimura variety of Hodge type that embeds in $\Ag$. Let $E(G,X)$ be the reflex field of $\ShimK$. We will assume throughout the paper that $\ShimK$ (and indeed, every Shimura variety we consider) has hyperspecial level at $p$. Let $\frak{p}$ be a prime of $E(G,X)$ above $p$. Then by work of Kisin \cite{Kisinintegral}, the embedding $\ShimK \hookrightarrow \Ag$ extends to a map of integral models over $\frak{p}$. For brevity, we will let the $\Ag$ and $\ShimK$ denote the mod $\frak{p}$ special fibers of $\Ag$ and $\ShimK$ respectively. 

Recall that, for each prime $\ell\neq p$, the \'etale cohomology of the universal family of abelian varieties furnishes representations of the \'etale fundamental group of $\ShimK$; in particular, we have a representation  $\rho_{\ell}: \pi_1(\ShimK\otimes \fpbar)\rightarrow G^{\der}(\Ql)$ for each $\ell$. Here, $G^{\der}$ denotes  the derived subgroup of $G.$

\begin{defn}
    Let $C\subset \ShimK$ be an irreducible curve defined $\Fpbar$. We say that $C$ has big monodromy if the monodromy representation $\rho_{\ell}|_{\pi_1(C)}$ has Zarisky dense monodromy in $G^{\der}(\Q_\ell)$ for some prime $\ell \neq p$. 
    
    We say that $x\in \ShimK(F)$ has big monodromy if $x\in \ShimK(H)$ for a sub-extension $\Fpbar\subset H\subset F$, with $H$ finitely generated over $\fpbar$, and a prime number $\ell\neq p$, such that the representation $\rho_{\ell}|_{G_H}$ has Zariski dense monodromy in $G^{\der}_{\Ql}$. 
\end{defn}

As we explain in \autoref{section: shim-set-up}, this definition is independent of the choice of $\ell$. 
\begin{theorem}\label{thm: intro function field}
    Suppose that $\ShimK$ intersects the ordinary locus of $\Ag$. Let $x\in \ShimK(\overline{\F_p(t)})$ be an ordinary point with big monodromy. Let $\Sigma$ be an infinite subset of the prime-to-$p$ Hecke orbit of $x$. Then, the Zariski closure of $\Sigma$ is a finite union of connected components of $\ShimK$. 
\end{theorem}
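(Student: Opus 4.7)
The plan is to show that the Zariski closure $Z := \overline{\Sigma} \subset \ShimK$ is a finite union of connected components. By replacing $Z$ with one of its irreducible components that meets $\Sigma$ in an infinite set, I reduce to the case where $Z$ is irreducible; the goal then becomes to show $\dim Z = \dim \ShimK$.

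First I would spread $x$ out to a geometric curve. Since $x \in \ShimK(H)$ with $H$ finitely generated over $\fpbar$, and $F = \overline{\F_p(t)}$ has transcendence degree one over $\fpbar$, the field $H$ is the function field of a smooth irreducible curve $C/\fpbar$, and there is a non-constant morphism $\iota\colon C \to \ShimK$ whose generic point is $x$. By the big monodromy hypothesis, the pullback of $\rho_\ell$ to $\pi_1(C)$ has image Zariski-dense in $G^{\der}(\Ql)$ for some $\ell \neq p$. For any prime-to-$p$ Hecke element $g$, the translate $T_g(\iota(C))$ is again a big-monodromy curve, because prime-to-$p$ Hecke correspondences are finite \'etale and intertwine the $\ell$-adic monodromy representations on $\ShimK$ up to conjugation; in particular, every point of $\Sigma$ lies on a big-monodromy curve of Hecke-translates of $\iota(C)$ which sits in $Z$.

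The main body of the argument, following the abstract's outline, is a combined global and local analysis of Hecke correspondences together with the abundance of Weyl special points. Globally, each point of $\Sigma$ lies on a big-monodromy curve forced to be contained in $Z$, which constrains how small $Z$ can be. Locally, at any ordinary point $z \in Z$, the Serre--Tate theorem presents the formal neighborhood of $z$ in $\ShimK$ as a formal torus, and prime-to-$p$ Hecke correspondences act on this torus through the Tate modules of the fiber. Using the abundance of Weyl special points in positive characteristic proved elsewhere in the paper, I would produce (after applying a prime-to-$p$ Hecke translation if necessary) a Weyl special point $z_0$ in the closure of $\Sigma$; at such a point, the Serre--Tate torus and the prime-to-$p$ Hecke action take an especially rigid form, which is precisely what the abundance result is designed to supply.

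The main obstacle will be converting the global big-monodromy hypothesis into a strong enough local statement at $z_0$ to force the formal completion of $Z$ at $z_0$ to be the entire formal torus, so that $Z$ contains a non-empty open subset of a component and therefore coincides with one. This is delicate because big monodromy is a statement about $\ell$-adic representations along $C$, whereas the formal structure at an ordinary point is governed by $p$-divisible groups; the bridge is provided by the compatibility between prime-to-$p$ Hecke correspondences and Serre--Tate coordinates, which is exactly why Weyl special points---where the Mumford--Tate group of the canonical lift is as large as possible and so the torus action is most rigid---serve as the correct test points for closing the argument.
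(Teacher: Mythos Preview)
Your proposal has a genuine gap: you never establish that $Z$ (or any specialization of it) is \emph{stable under a non-trivial Hecke correspondence}, and without this there is no input for Chai's rigidity theorem at a Weyl special point. That rigidity result requires an automorphism of the Serre--Tate formal torus preserving the formal germ of $Z$, and such an automorphism is produced precisely by a Hecke correspondence fixing $Z$; merely knowing that $Z$ contains many points of the Hecke orbit of $x$ does not supply one.

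Two concrete problems. First, your claim that each $y_i \in \Sigma$ lies on a Hecke-translated curve $\tau_i(\iota(C))$ \emph{contained in $Z$} is incorrect. The closure $Z$ is taken in $\ShimK_{\Fbar}$, where each $y_i$ is a closed point; the curve $(\tau_i(\iota(C)))_{\Fbar}$ passes through $y_i$, but there is no reason the whole curve lies in $Z$. If one instead took the closure of the image of $\Sigma$ in $\ShimK$ over $\fpbar$, each $y_i$ would indeed close up to a curve and the statement would collapse to Theorem~\ref{thm: intro curve}; the present theorem is strictly finer.

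Second, the paper's actual mechanism---absent from your outline---is: (i) show $V_{\Fbar}:=Z$ itself has big monodromy; (ii) prove uniform degree and field-of-definition bounds on the components $V_{i,0,\Fbar} \subset \tau_i^{\vee}(V_{\Fbar})$ containing $x$; (iii) spread everything out over a curve $C/\F_q$ and specialize to a closed point $c \in C(\F_q)$; (iv) use that there are only finitely many subvarieties of bounded degree over a fixed finite field to force $V_{i,0,c}=V_{j,0,c}$ for some $i\neq j$ by pigeonhole, yielding a non-trivial Hecke correspondence fixing $V_c$. Only then do Weyl special points enter (on $V_c$, not on $Z$ directly), and Chai's rigidity together with D'Addezio's parabolicity theorem finish the argument. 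Your spreading-out step is pointed in the right direction, but it must land over a finite field and be followed by the degree-bound and pigeonhole steps; these are the missing ideas.
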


%We will make precise what we mean by the term ``big monodromy'' in Section 2. 
We note that this is a result that parallels the ordinary Hecke orbit conjecture as conjectured by Chai--Oort, proved by Chai in \cite{Chai-inventiones} for $\Ag$, and proved by van Hoften for  general  Hodge type Shimura varieties (see also  \cite{MST} for the case of orthogonal Shimura varieties, and see \cite{MarcoPol} for the non-ordinary setting). Indeed, the ordinary Hecke orbit  conjecture is the statement that the entire $\ell$-Hecke orbit of an ordinary $\Fpbar$-point is dense. %However, all the proofs need at least the entire $\ell$-power Hecke orbit. 
By contrast, Theorem \ref{thm: intro function field} proves the density of an arbitrary infinite subset of the prime-to-$p$ Hecke orbit, with the caveats that the initial point has  big monodromy and that it is a $F$-point. As discussed above, the latter condition is crucial, while we expect that  the former can be removed with extra work.

As a key step towards this result, we prove the following result of a more geometric nature. 

\begin{theorem}\label{thm: intro curve}
Let $\ShimK$ be as above, and let $C\subset \ShimK$ be an irreducible curve with big monodromy that intersects the ordinary locus that is defined over $\F_q$. Let $\tau_i$ be a sequence of non-trivial prime-to-$p$ Hecke correspondences such that $\deg \tau_i \rightarrow \infty$. Let $C_i \subset \tau_i(C)$ be a union of irreducible components. Then, the Zariski closure of $\cup_i C_i$ is a finite union of connected components of $\ShimK$.  

\end{theorem}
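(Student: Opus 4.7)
I would attack Theorem \ref{thm: intro curve} by analyzing the Zariski closure $Z := \overline{\bigcup_i C_i}$ via a global--local strategy: first propagating big monodromy from $C$ to the $C_i$ and thus to $Z$, then studying the formal neighborhood of an ordinary point of $Z$ using Serre--Tate theory, and finally invoking the abundance of ordinary Weyl special points to conclude.

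For the first step, a prime-to-$p$ Hecke correspondence is given by an \'etale diagram of hyperspecial level Shimura varieties under which $\rho_\ell$ pulls back equivariantly. Hence the Zariski closure of $\rho_\ell|_{\pi_1(C_i)}$ equals that of $\rho_\ell|_{\pi_1(C)}$, namely $G^{\der}(\Q_\ell)$. Since the ordinary locus is preserved by such correspondences, each $C_i$ also meets it. Replacing $Z$ by an irreducible component $Z_0$ containing infinitely many $C_i$, one obtains a closed irreducible generically ordinary subvariety with big monodromy, and the problem reduces to showing that any such $Z_0$ is a connected component of $\ShimK$.

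Next, I would pick an ordinary point $y \in Z_0$ and work with the formal completion at $y$. By the Serre--Tate theory for ordinary points of Hodge-type Shimura varieties (as extended by Noot), this completion is canonically a formal torus $\widehat{T}_y$, and prime-to-$p$ Hecke correspondences between ordinary points act through this torus structure. Using Chai's Tate-linearity principle and rigidity of formal subtori under the local Hecke action, I would argue that the formal completion $\widehat{Z}_{0,y}$ is a formal subtorus of $\widehat{T}_y$: the key input is a limiting argument exploiting $\deg \tau_i \to \infty$ to show that the formal germs of the $\tau_i(C)$ passing through $y$ accumulate to a genuine formal subgroup. I would then invoke the paper's abundance result to locate an ordinary Weyl special point $s \in Z_0$; for such $s$, the prime-to-$p$ Hecke orbit is known to be dense in the connected component of $\ShimK$ (the ordinary Hecke orbit conjecture), and combined with the formal subtorus structure at $s$ this forces $Z_0$ to equal a connected component of $\ShimK$.

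The main obstacle is the local analysis: we have only a sparse family of Hecke correspondences (the $\tau_i$) rather than a whole local Hecke algebra, so deducing that $\widehat{Z}_{0,y}$ acquires a full formal subgroup structure requires a delicate limiting argument combining $\deg \tau_i \to \infty$ with Tate linearity. The big monodromy hypothesis is then essential in the final step to exclude the possibility that $\widehat{Z}_{0,y}$ is a \emph{proper} formal subtorus of $\widehat{T}_y$, since proper Tate-linear subvarieties have constrained $\ell$-adic monodromy that cannot exhaust $G^{\der}(\Q_\ell)$.
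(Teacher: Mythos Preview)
Your plan has the right overall shape (propagate monodromy, establish Tate-linearity at an ordinary point, then use big monodromy to conclude), but the middle step---the heart of the argument---has a genuine gap. Chai's rigidity theorem requires a \emph{single} correspondence that stabilizes both the formal neighborhood and the formal subscheme $\widehat{Z}_{0,y}$. Your ``limiting argument exploiting $\deg\tau_i\to\infty$'' does not supply this: the $\tau_i$ are an arbitrary sequence with no coherence, the germs of $\tau_i(C)$ through a given $y$ need not accumulate to anything, and there is no version of Chai's result for a sequence of maps rather than a single one. The paper fills this gap with a global finiteness argument you are missing entirely: one considers the irreducible component $V_{i,0}\subset\tau_i^{\vee}(V)$ containing $C$, shows (using the degree formula for \'etale pullback and the fact that $V_{i,0}$ has maximal monodromy) that $\deg V_{i,0}$ and its field of definition are bounded independently of $i$, and then applies pigeonhole over subvarieties of bounded degree over a fixed finite field to get $V_{i,0}=V_{j,0}$ for $i\neq j$. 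This produces a genuine Hecke correspondence $\tau$ with $\tau(V)=V$.

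Two further issues. First, even once $\tau(V)=V$, you need a \emph{point} $x\in V$ with $x\in\tau(x)$ before $\tau$ acts on any formal neighborhood at all; the paper obtains this via a Galois-theoretic criterion (Frobenius at $x$ conjugate into a maximal torus containing $h$) combined with Chebotarev on $V$. Second, the role of Weyl special points is not what you describe. You do not use them to invoke the ordinary Hecke orbit conjecture (the Hecke orbit of $s$ is not known to lie in $Z_0$, so its density in $\ShimK$ tells you nothing about $Z_0$). Rather, one arranges that the fixed point $x$ above is Weyl-special; then the isogeny $\alpha\in T_x(\Q)$ induced by $\tau$ generates a $\Q$-subtorus whose adjoint image must equal $T_x^{\ad}$ (since a Weyl torus in a $\Q$-simple adjoint group has no proper $\Q$-subtori), and this is exactly what verifies Chai's ``no invariants on the tangent space'' hypothesis. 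The final passage from Tate-linearity plus big monodromy to $Z_0=\ShimK$ uses D'Addezio's parabolicity conjecture for the $F$-isocrystal, not the Hecke orbit conjecture.
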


The characteristic zero analogue of Theorem \ref{thm: intro function field} (i.e. the Andr\'e--Pink--Zannier conjecture in the setting of $\overline{\Q}$-points with big monodromy) follows from the archimedean equidistribution of Hecke orbits \cite{ClozelOhUllmo}, as observed by Pink in \cite[Section 7]{Pink}. A rough sketch of Pink's argument is as follows. Suppose $x\in \ShimK(\Qbar)$ has big monodromy; given a Hecke correspondence $\tau$ on $\ShimK$, the set $\tau(x)$ breaks up into a bounded number of Galois orbits (where the bound is independent of $\tau$). This implies that the Zariski closure of an infinite subset of $\cup_i\tau_i(x)$ must contain a positive proportion of $\tau_i(x)$ for an infinite sequence of $i$. Equidistribution of Hecke orbits now shows that the closure must be a union of connected components of the Shimura variety.

This argument does not go through in positive characteristic, as there is no proven analogue of equidistribution of Hecke correspondences. In fact, the characteristic zero statement is in terms of the hyperbolic metric on $\ShimK$, which, of course, does not have a positive characteristic analogue. Nevertheless, Mazur has formulated the following algebraic notion of equidistribution:

\begin{defn}[Mazur]\label{defn: intro equidistrbution}
    Let $Y\subset \ShimK$ be a subvariety of positive dimension. The Hecke orbit of $Y$ is said to equidistribute if the minimal degree of a hypersurface containing $\tau_i(Y)$ tends to infinity for any sequence of prime-to-$p$ Hecke correspondences $\tau_i$ that satisfy $\deg \tau_i \rightarrow \infty$.
\end{defn}
Given this definition, it seems reasonable to posit the following:
\begin{conjecture}[Algebraic equidistrbution of Hecke orbits]\label{conj: intro equidistribution}
    Let $Y \subset \ShimK$ be a positive-dimensional subvariety that intersects the ordinary locus. Then the Hecke orbit of $Y$ equidistributes.
\end{conjecture}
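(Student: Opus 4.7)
My approach is to reduce the conjecture to \autoref{thm: intro curve} by producing a suitable curve inside $Y$, then upgrade the ``Zariski density'' conclusion there to the stronger ``not contained in any bounded-degree hypersurface'' statement demanded by \autoref{defn: intro equidistrbution}, using a compactness argument on the parameter space of bounded-degree hypersurfaces.

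First I would reduce to the case of a curve with big monodromy. Assume $Y$ is irreducible. In the main case, where $\rho_\ell(\pi_1(Y))$ is Zariski dense in $G^{\der}(\Q_\ell)$ for some $\ell \neq p$, I extract an irreducible curve $C \subset Y$ meeting the ordinary locus and having big monodromy. When $\dim Y = 1$ take $C = Y$; when $\dim Y \geq 2$, iterated hyperplane sections in a projective embedding of $\ShimK$ produce such a $C$, and a Bertini/Lefschetz-type theorem for \'etale fundamental groups guarantees that the monodromy of a sufficiently generic $C$ still surjects onto that of $Y$, hence is Zariski dense in $G^{\der}(\Q_\ell)$. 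After spreading out, one may also assume $C$ is defined over some $\F_q$, so that \autoref{thm: intro curve} applies.

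Next, suppose for contradiction that equidistribution fails: there exist $d$ and an infinite subsequence $\tau_{i_j}$ with $\tau_{i_j}(Y) \subset H_{i_j}$ for hypersurfaces $H_{i_j}$ of degree $\leq d$. The parameter space $\mathcal{P}_d$ of such hypersurfaces is of finite type, so after passing to a further subsequence the Zariski closure $T = \overline{\{H_{i_j}\}} \subset \mathcal{P}_d$ is irreducible of some fixed dimension. Let $\mathcal{U} \subset \mathcal{P}_d \times \ShimK$ be the universal hypersurface, and let $\overline{S}$ be the Zariski closure in $\mathcal{U}|_T$ of the incidence set $\bigcup_j \{H_{i_j}\} \times \tau_{i_j}(C)$. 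By \autoref{thm: intro curve}, the image of $\overline{S}$ under the second projection is a finite union of connected components $\ShimK^\ast$ of $\ShimK$. The subcase $\dim T = 0$ is then immediate: all $H_{i_j}$ coincide with a single hypersurface $H$, whence $\bigcup_j \tau_{i_j}(C) \subset H$ is forced to be Zariski dense in $\ShimK^\ast \not\subset H$, a contradiction.

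The main obstacle is the case $\dim T \geq 1$, where one faces a genuine positive-dimensional family of bounded-degree hypersurfaces absorbing the Hecke translates. To rule this out I would attempt to show that the rigidity of the Hecke action forces the $\tau_{i_j}(C)$ into a common proper subvariety --- for instance by analyzing the base locus of the family parametrized by $T$ and its interaction with the action of $G(\Afp)$ --- again contradicting \autoref{thm: intro curve}. Combined with a complementary argument for subvarieties $Y$ of small monodromy (which must lie in a proper Hodge-type Shimura subvariety, to be handled by induction on the semisimple rank of the ambient group), this should yield the conjecture. In my view, the hardest part is controlling positive-dimensional families of hypersurfaces absorbing Hecke orbits, as this appears to require effective algebraic equidistribution inputs that have no current analogue in positive characteristic and that, in characteristic zero, are supplied by the archimedean equidistribution of \cite{ClozelOhUllmo}.
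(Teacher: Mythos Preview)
This statement is a \emph{conjecture} in the paper, not a theorem: the authors do not prove it in general and explicitly say they ``plan to explore Mazur's notion of algebraic equidistribution beyond the cases discussed above'' in future work. What the paper does prove is the special case \autoref{thm: intro equidistrbution for big monodromy}, where $Y=C$ is already a generically ordinary curve with big monodromy defined over $\Fpbar$. So your proposal is an attempt at something the paper leaves open, and should be read in that light.

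For the part the paper \emph{does} prove, your compactness argument on the parameter space $\mathcal{P}_d$ is needlessly complicated and leads you into the ``$\dim T\geq 1$'' difficulty you flag. The paper's deduction of \autoref{thm: intro equidistrbution for big monodromy} from \autoref{thm: intro curve} avoids this entirely by a Galois-descent trick: since $C$ is defined over $\F_q$, so is each $\tau_i(C)$; hence the linear system of degree-$\leq d$ hypersurfaces through $\tau_i(C)$ is a projective linear subspace defined over $\F_q$, and if nonempty it has an $\F_q$-point. Thus each $H_i$ may be taken over a \emph{fixed} finite field, and since there are only finitely many such hypersurfaces of bounded degree, a subsequence of the $H_i$ is constant. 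One single hypersurface then contains infinitely many $\tau_i(C)$, contradicting \autoref{thm: intro curve}. Your ``$\dim T=0$'' case is exactly this, and the Galois trick shows one can always reduce to it; your positive-dimensional obstacle is a red herring in the curve-with-big-monodromy setting.

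Your Bertini/Lefschetz reduction from a higher-dimensional $Y$ with big monodromy to a curve $C$ is plausible and would extend \autoref{thm: intro equidistrbution for big monodromy} to such $Y$, though the paper does not claim this. However, your handling of the small-monodromy case is too optimistic: a subvariety $Y$ with non-dense monodromy need not lie in a proper sub-Shimura variety of $\ShimK$ (only in a Hecke translate of one, at best, and even that requires care in positive characteristic), and your proposed induction would in any case assume the very conjecture you are trying to prove for the smaller group. This is precisely the territory the paper leaves as an open problem.
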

 Note that the ordinary assumption is necessary, since otherwise we may simply choose $H$ to be the non-ordinary locus, which is stable under all Hecke correspondences. When $Y$ is contained in a lower-dimensional Newton stratum, the formulation of \autoref{conj: intro equidistribution} should be in terms of the smallest subvariety of $\ShimK$ that is a union of central leaves and contains $Y$. Theorem \ref{thm: intro curve} now implies the following theorem.
\begin{theorem}\label{thm: intro equidistrbution for big monodromy}
    Let $C\subset \ShimK$ be a generically ordinary curve defined over $\Fpbar$ having big monodromy. Then the Hecke orbit of $C$ equidistributes.  
\end{theorem}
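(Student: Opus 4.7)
The plan is to deduce Theorem \ref{thm: intro equidistrbution for big monodromy} from Theorem \ref{thm: intro curve} by a contradiction argument grounded in the finite-dimensionality of bounded-degree linear systems. Suppose the Hecke orbit of $C$ fails to equidistribute. Then there exist an integer $d \geq 1$ and an infinite subsequence $\{\tau_{i_j}\}$ of prime-to-$p$ Hecke correspondences with $\deg \tau_{i_j} \to \infty$, such that each $\tau_{i_j}(C)$ is contained in a hypersurface of degree at most $d$ in the ambient projective space, equivalently in an effective divisor $D_j$ in the complete linear system $|L^d|$ on $\ShimK$, where $L$ is the ample polarization giving the embedding.

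The space $|L^d|$ is a finite-dimensional projective space, so after passing to a further subsequence I may assume that the moduli points $\{[D_j]\}$ are Zariski dense in some irreducible closed subvariety $Z \subset |L^d|$. Let $\mathcal{D} \subset Z \times \ShimK$ be the universal divisor and $W := \pi_{\ShimK}(\mathcal{D}) \subset \ShimK$ its image. Every $\tau_{i_j}(C)$ lies in $D_j \subset W$, so by Theorem \ref{thm: intro curve} applied to the subsequence $\{\tau_{i_j}\}$, the Zariski closure of $\bigcup_j \tau_{i_j}(C)$ is a finite union $\ShimK'$ of connected components of $\ShimK$, hence $\ShimK' \subset W$. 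In the case $\dim Z = 0$, $W$ is a single proper effective divisor on $\ShimK$, which contradicts $\ShimK' \subset W$ since $\ShimK'$ has full dimension in $\ShimK$.

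The remaining case $\dim Z \geq 1$ is the main obstacle: a positive-dimensional family of bounded-degree effective divisors can in principle cover $\ShimK$, so the inclusion $\ShimK' \subset W$ is not immediately contradictory. To rule it out, I would exploit the flexibility of Theorem \ref{thm: intro curve}, which allows one to pick any union of components $C_j \subset \tau_{i_j}(C)$ and still conclude Zariski density of $\bigcup_j C_j$. Combining this with the universal family structure $\mathcal{D} \to Z$, one forms the incidence of Hecke components within the family and argues that the resulting assignment from the countable collection of Hecke components into $Z$ must concentrate in a proper subvariety of $Z$, contradicting the Zariski density of $\{[D_j]\}$. Making this rigorous is the technically heaviest step: it requires careful bookkeeping of how the many components of $\tau_{i_j}(C)$ distribute among the divisors in the family, together with a Bezout-type comparison of the growing total degree of $\tau_{i_j}(C)$ against the bounded degree of the $D_j$'s, in order to force the parameter $z_j$ into a proper closed subvariety of $Z$.
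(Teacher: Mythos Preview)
Your reduction to a single fixed hypersurface (the $\dim Z = 0$ case) is correct and is exactly how the paper concludes. The genuine gap is the $\dim Z \geq 1$ case, which you yourself flag as ``the technically heaviest step'' and leave as a sketch. That sketch does not go through: a positive-dimensional linear system of degree-$d$ divisors can certainly cover $\ShimK$, and neither the flexibility of Theorem~\ref{thm: intro curve} in choosing components $C_j$ nor a Bezout comparison forces the parameters $[D_j]$ into a proper closed subset of $Z$. A single degree-$d$ hypersurface can contain arbitrarily many irreducible curves of bounded degree, so counting Hecke components against $d$ yields nothing; and Theorem~\ref{thm: intro curve} only controls the \emph{closure} of $\bigcup_j C_j$, which says nothing about where the individual moduli points $[D_j]$ sit inside $|L^d|$.

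The paper bypasses this case entirely via an arithmetic observation you did not use: $C$ is defined over $\Fpbar$, hence over some fixed $\F_q$, and therefore so is every $\tau_i(C)$. After a routine specialization to hypersurfaces over $\Fpbar$, one observes that the set of degree-$d$ hypersurfaces containing $\tau_i(C)$ is a nonempty $\F_q$-rational linear subspace of $|L^d|$, and hence has an $\F_q$-point; equivalently, one intersects $H_i$ with its $\Gal(\Fpbar/\F_q)$-conjugates. Either way, the $H_i$ may be taken over the \emph{fixed} finite field $\F_q$. Since there are only finitely many subvarieties of bounded degree over a fixed finite field, a subsequence of the $H_i$ is constant, and you are forced into your $\dim Z = 0$ case. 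The missing idea is precisely this Galois/finite-field finiteness trick, which makes the compactness argument in $|L^d|$ unnecessary.
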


Indeed, suppose that $C$ is defined over $\F_q$. We may quickly reduce to the case of hypersurfaces defined over $\Fpbar$ by standard specialization arguments. Now let $H_i/\Fpbar$ be the hypersurface with minimal degree containing $\tau_i(C)$. By intersecting $H_i$ with its Galois conjugates, we may further reduce to the case that the $H_i$ are all defined over a fixed finite field, independent of $i$. If the degrees of $H_i$ were bounded for some subsequence $i$, we may pass to a further subsequence and deduce that the sequence of hypersurfaces $H_i$ is constant, as there are only finitely many subvarieties having bounded degree defined over a fixed finite field. The conclusion now follows from Theorem \ref{thm: intro curve}. 

In future work, we plan to explore Mazur's notion of algebraic equidistribution beyond the cases discussed above. 

\subsection{An application: isogenous Jacobians}

Recall the folklore question posed by Katz--Oort, which asks whether there exist abelian varieties $A$ over $\Qbar$ which are not isogenous to the Jacobian of any curve. More generally (following Poonen), given any subvariety $D\subset \Ag$ and a field $L$, one can ask for the existence of a point $x\in \Ag(L)$ that is not isogenous to any $y\in D(\overline{L})$. The existence of such a point (with $L = \overline{\Q}$) was proved by Chai-Oort and Tsimerman in \cite{ChaiOortJacobian} and \cite{Jacob}, and also independently by \cite{MasserZannier} and \cite{AJ}. Over $\Fpbar$, this expectation depends heavily on the dimension of $D$ (see \cite{AJunlikely}). However, the main theorem of \cite{AJ} proves the existence of such  a function field-valued point. An immediate corollary of our main theorem is the following finiteness result: 
\begin{corollary}\label{cor: intro jacobians}
    Let $D\subsetneq \ShimK$ be any subvariety defined over $F = \overline{\F_p(t)}$. Let $x\in \Sh(F)$ be any ordinary point with big monodromy. Then, there are only finitely many points in the prime-to-$p$ Hecke orbit of $x$ which lie in $D$. In particular, let $A/F$ be any $g$-dimensional abelian variety (with $g\geq 4$) having big monodromy. Then there are only finitely many Jacobians in the prime-to-$p$ isogeny class of $A$. 
\end{corollary}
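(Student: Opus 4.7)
My plan is to deduce both assertions directly from Theorem \ref{thm: intro function field} by a short contradiction argument.

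For the first assertion, set
\[
\Sigma \defeq D(F) \cap \{\text{prime-to-}p \text{ Hecke orbit of } x\},
\]
and suppose for contradiction that $\Sigma$ is infinite. Since $x$ is ordinary with big monodromy and $\Sigma$ is an infinite subset of its prime-to-$p$ Hecke orbit, Theorem \ref{thm: intro function field} applies: the Zariski closure $\overline{\Sigma}$ must be a nonempty finite union of connected components of $\ShimK$, and in particular $\dim \overline{\Sigma} = \dim \ShimK$. On the other hand $\overline{\Sigma} \subseteq D \subsetneq \ShimK$. After restricting $\ShimK$ to the (finite) union of those connected components that are reachable from $x$ under prime-to-$p$ Hecke correspondences (a harmless reduction, since the prime-to-$p$ Hecke orbit of $x$ lives entirely inside this union), the hypothesis $D \subsetneq \ShimK$ forces $D$ not to contain every such component; but $\overline{\Sigma}$ does, a contradiction.

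For the statement on Jacobians I specialize to $\ShimK = \Ag$, which is geometrically connected and so presents no component subtleties. I take $D = \mathcal{T}_g \subset \Ag$ to be the Torelli locus, that is, the closure of the image of the map $\mathcal{M}_g \to \Ag$ sending a smooth projective curve to its Jacobian. For $g \geq 4$ one has $\dim \mathcal{T}_g = 3g-3 < g(g+1)/2 = \dim \Ag$, so $\mathcal{T}_g \subsetneq \Ag$. A $g$-dimensional abelian variety $A/F$ corresponds to a point $x \in \Ag(F)$; the representation $\rho_\ell$ on the universal abelian scheme specializes at $x$ to the Galois action on the $\ell$-adic Tate module of $A$, so big monodromy of $A$ is exactly big monodromy of $x$ in the sense of the definition. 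Moreover, the prime-to-$p$ Hecke orbit of $x$ is precisely the set of $F$-points corresponding to p.p.a.v.'s prime-to-$p$ isogenous to $A$, so Jacobians in the prime-to-$p$ isogeny class of $A$ correspond bijectively to points of this Hecke orbit lying in $\mathcal{T}_g(F)$. The first assertion, applied with $\ShimK = \Ag$ and $D = \mathcal{T}_g$, now yields the desired finiteness.

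The only genuinely subtle point is a hypothesis mismatch: Theorem \ref{thm: intro function field} requires $x$ (equivalently $A$) to be ordinary, whereas the Jacobian clause assumes only big monodromy. I would handle this by arguing that big $\ell$-adic monodromy forces $A$ to be generically ordinary---if the generic Newton polygon of $A$ were non-ordinary, the constraints imposed by the Newton stratum via the Frobenius torus/crystalline realization would confine $\rho_\ell(G_H)$ to a proper algebraic subgroup of $G^{\der}$, contradicting Zariski density. I expect this ordinariness-from-big-monodromy implication to be the only real obstacle; granting it, the corollary is an immediate consequence of Theorem \ref{thm: intro function field}.
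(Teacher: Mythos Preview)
Your approach matches the paper's: the corollary is stated there as ``an immediate corollary of our main theorem'' with no separate proof, so the intended argument is exactly the contradiction you describe via Theorem~\ref{thm: intro function field}. The first part is essentially fine; your worry about connected components is legitimate (strictly, if $D$ happened to contain every component reachable from $x$ while missing others, the hypothesis $D\subsetneq\ShimK$ would not by itself yield a contradiction), but this is an edge case in the \emph{statement} rather than in your argument, and it disappears for $\Ag$, which is geometrically irreducible.

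The real issue is your proposed repair of the missing ordinariness hypothesis in the Jacobian clause. You are right that Theorem~\ref{thm: intro function field} needs $x$ ordinary while the second sentence of the corollary assumes only big monodromy. But your claim that big $\ell$-adic monodromy forces ordinariness is false. The Newton polygon is a $p$-adic invariant and places no constraint on $\rho_\ell$ for $\ell\neq p$: by Chai's results on the monodromy of Hecke-invariant subvarieties, every non-supersingular Newton stratum of $\Ag$ already has $\ell$-adic monodromy Zariski dense in $\mathrm{Sp}_{2g}$, so a general curve inside such a stratum furnishes a non-ordinary abelian variety over $F$ with big monodromy. Your heuristic about the ``Frobenius torus/crystalline realization confining $\rho_\ell$'' conflates $p$-adic constraints with the $\ell$-adic side. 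The honest resolution is that the omission of ``ordinary'' in the Jacobian sentence is a slip in the paper's statement (the first sentence includes it, and the phrase ``in particular'' signals a direct specialization); once that hypothesis is added, your deduction goes through.
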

One could ask whether the strenghtening of this result, in the style of Coleman--Oort, holds true: i.e. whether it is possible to find a Jacobian of dimension $g$ which is not isogenous to any other Jacobian (we thank Olivier de Gaay Fortman for asking us this question); this is a more restrictive version of the question of Katz--Oort. See \cite{Olivier} for results in characteristic-zero which go significantly beyond this specific question. \autoref{cor: intro jacobians} shows that \emph{every} ordinary $g$-dimensional Jacobian over $F$, having big monodromy, will be isogenous to at most finitely many other Jacobians.

\subsection{Outline of proof of Theorem \ref{thm: intro curve}}
Let $V$ be the Zariski closure of the $C_i$'s, where notation is as in Theorem \ref{thm: intro curve}. Our proof of Theorem \ref{thm: intro curve} proceeds in the following steps. 
\begin{enumerate}
    \item  We first reduce to the case that $C$ has maximal prime-to-$p$ monodromy.  Crucial inputs here are the   works of  Cadoret--Hui--Tamagawa and B\"ockle--Gajda--Petersen. 

    \item We then establish degree bounds on Hecke translates of $V$ and use these bounds in conjunction with finiteness arguments to deduce that $V$ is fixed by a Hecke correspondence, $\tau$. This is \autoref{thm: hecke fixed}.

    \item We then prove that $V$ has infinitely many \emph{Weyl special points} (see \autoref{defn: Weyl-special} for this notion, which  was introduced in \cite{ChaiOortJacobian}), including points that are fixed by $\tau$. Our result gives a characteristic $p$ proof of the existence of Weyl special points in Shimura varieties -- %the existence of such points over $\Qbar$
    this was originally proved by Chai--Oort in \cite{ChaiOortJacobian}. 

    \item  Using crucially a rigidity theorem of Chai (\cite{Chairigidity}), we show that any subvariety fixed by $\tau$, and also  containing  a Weyl special point  fixed by $\tau$, must be Tate-linear. 

    \item We now use arguments of Chai, van Hoften, and D'Addezio--van Hoften (which uses the parabolicity conjecture proved by D'Addezio) to immediately conclude that $V$ equals a union of connected components of $\ShimK$.
\end{enumerate}

\subsection{Prior and related work.}
%Cite various people who have worked on this conjecture, and the Hecke orbit conjecture.

The first version of the Andr\'e--Pink--Zannier conjecture was formulated by Andr\'e \cite[\S X 4.5, p. 216 (Problem 3)]{Andre}, in the setting where an algebraic curve meets a Hecke orbit in a Zariski dense set of points. Subsequently, various versions of the conjecture were formulated and studied independently by Pink \cite{Pink}, Yafaev \cite{yafaev2000sous}, and Zannier \cite{Pink}.  Using the Pila--Zannier strategy (and the Pila--Wilkie theorem)  special cases were obtained by Orr \cite{orr} and Richard--Yafaev \cite{richard-yafaev-height}, and  as mentioned above, it was very recently proven by Richard--Yafaev \cite{AndrePinkAbelian} for a vast class of Shimura varieties, namely those of abelian type. For more on the history of this problem, we refer the reader to \cite[\S 1.2]{richard-yafaev-height}.

\subsection{Organization of the paper}
In Section 2, we define and recall various notions and reduce to the case of maximal monodromy. In Section 3, we show that $V$ must be stable by a Hecke correspondence. In Section 4, we prove that $V$ contains Weyl special points, that $V$ is Tate-linear, and thereby conclude Theorem \ref{thm: intro curve}. In Section 5, we then prove Theorem \ref{thm: intro function field}.

\subsection{Acknowledgments}
We are grateful to Greg Baldi, Olivier de Gaay Fortman, Barry Mazur, and Salim Tayou for helpful discussions. Throughout the course of this work, J.L. was supported   by a Dirichlet Fellowship and the DFG Walter Benjamin grant \emph{Dynamics on character varieties and Hodge theory} (project number: 541272769). A.S. is partially supported by the NSF CAREER grant DMS 2338942 and a Sloan Research fellowship.

\section{Setup}
Let $\F$ denote either $\F_q$ or $\Fpbar$. Consider the setting of $S$, a geometrically irreducible variety over $\F$, and an abelian scheme $\mathcal{B}/S$. $\mathcal{B}$ induces $\ell$-adic local systems $T_\ell$ on $S$ for $\ell \neq p$, with associated representations $\rho_{\ell}$ of $\pi_1(S)$, and an $F$-crystal $D$ on $S$. We will use the following terminology.
\begin{enumerate}
    \item For a prime $\ell \neq p$, the term $\ell$-adic monodromy of $S$ will refer to the monodromy of $T_{\ell}$. 
    \item The prime-to-$p$ monodromy of $\mathcal{B}$ or $S$ will refer to the monodromy of $\prod_{\ell \neq p} T_{\ell}$.
    \item The term overconvergent $p$-adic monodromy of $\mathcal{B}$ or $S$ will refer to the monodromy of the isocrystal $D\otimes \Q_p$ in the overconvergent category.
    \item The term $p$-adic monodromy of $\mathcal{B}$ or $S$ will refer to the monodromy of $D\otimes \Q_p$ (without any convergence conditions).
    \item The term geometric ($\ell$-adic or prime-to-$p$ or $p$-adic) monodromy of $\mathcal{B}$ or $S$ will refer to the ($\ell$-adic or prime-to-$p$ or $p$-adic) monodromy of $S_{\Fpbar}$. 
\end{enumerate}

\subsection{Shimura varieties}\label{section: shim-set-up}
Now, suppose that $(G,X)$ is a Shimura datum which admits a map of Shimura data to $(\textrm{GS}_{\textrm{p}},S^{\pm})$, such that the following holds:
\noindent\begin{assumption}\label{assumption: shim}
    \begin{enumerate}
\hfill\item The kernel of the map $G\rightarrow \textrm{GS}{\textrm{p}}$ is central in $G$. 
%\item The derived group $G^{\der}$ is simply connected.
 \item $G^{\der}$ is $\mb{Q}$-simple.
        \item The level structure $K\subset G(\Af)$ is neat and satisfies $K = \prod_\ell K_\ell$.
        \item The level structure at $p$, $K_p$, is a hyperspecial subgroup of $G(\mb{Q}_p)$.
    \end{enumerate}
\end{assumption}

 We choose a Hodge morphism (necessarily a finite map) of $\ShimK$ into $\Ag$ induced by the symplectic $\Q$-representation $W$ of $G$ induced by the map $G\rightarrow \textrm{GS}_{\textrm{p}}$. We let $\W\subset W$ be a self-dual lattice such that $\W \otimes \hat{\Z}$ is stable by $K$.  Let $\abvar^{\uni}/\ShimK$ denote the universal abelian scheme on $\ShimK$ induced by Hodge morphism of $(G,X)$. We will assume without losing generality that there is no subtorus $Z'$ of the center $Z(G)$ such that the conjugacy class of maps from the Deligne Torus factors through the group generated by $G^{\der}$ and $Z'$.

 Let $E = E(G,X)$ be the reflex field. Let $p$ be a prime at which at the level structure is hyperspecial. By work of Kisin, $\ShimK$ has an integral canonical model over $O_{E,v}$, where $v$ is any prime of $E$ dividing $p$. We abuse notation and let $\ShimK$ denote the mod $v$ special fiber of the integral canonical model. At times, it will not be necessary to explicitly keep note of the level structure $K$, in which case we will use the notation $\Sh$ for our mod $v$ Shimura variety.

Following Cadoret \cite{cadoret2017geometric}, we need the following definition in the setting where $G^{\der}$ is not simply connected. 
\begin{defn}
    Let $H$ be a semisimple group. We let $H^{\SC}$ denote its simply connected cover. If $H$ is over $\Q_\ell$, We say that a compact open subgroup of $H(\Q_\ell)$ is \emph{almost hyperspecial} if it is the image of a hyperspecial subgroup of $H^{\SC}(\Q_\ell)$. 

    Given a reductive group $G$, we will let the same symbol $G^{\SC}$ denote $(G^{\der})^{\SC}$.
\end{defn}

\begin{remark}\label{remark: non-sc}
    Shimura varieties of Hodge type (i.e. the map $G \rightarrow \textrm{GS}_{\textrm{p}}$ is injective) where the simple factors of $G$ have type $A,B,C$ always have the property that $G^{\der}$ is simply connected. If $G$ is of type $D$ and the Shimura data is of K3 type (the situation described in \cite[Section 3.4]{Kai-Wen}), then it will also satisfies this property. However, there are Shimura varieties of Hodge type where $G$ has type $D$, but the Shimura datum is not of K3 type (the situation described in \cite[Section 3.3]{Kai-Wen}). In this case, $G^{\der}$ will not be simply connected. For $\ell \gg 1$, the index of an almost hyperspecial subgroup inside a hyperspecial subgroup has order a power of 2, and a set of coset representatives can be chosen to lie in any maximally split maximal torus. We briefly describe how to establish this. Pick a split quadratic form and pick a basis so that the Gram matrix of the form is block anti-diagonal, with the anti-diagonal blocks equaling the identity matrix. Then, a diagonal matrix of the form $(\alpha,1,\hdots 1, \alpha^{-1},1, \hdots 1)$ together with the identity matrix will be a full set of coset-representatives for any choice of non-square unit $\alpha$. Analogous constructions work even for the non-split reductive orthogonal groups over $\Z_\ell$. 
\end{remark}

 Let $T_{\ell}$ denote the family of $\ell$-adic Tate modules on $\Sh$ for $\ell \neq p$ and $T^{(p)} = \prod_{\ell \neq p} T_{\ell}$, and let $D$ denote the relative crystalline cohomology of $\abvar^{\uni}$. Then, the geometric prime-to-$p$ monodromy of $\ShimK$ is the product of the $\ell$-adic monodromy ranging over primes $\ell \neq p$. This follows from Assumption 3 in \ref{assumption: shim}. Further, for $\ell \gg 1$, the geometric $\ell$-adic monodromy is an almost hyperspecial subgroup of $G^{\der}(\Q_\ell)$.

Let $\omega$ denote the Hodge line bundle on $\Sh$. We denote by $\Sh^{\BB}$ the Baily--Borel compactification of $\Sh$. Recall  that $\omega$ extends canonically to a line bundle $\Sh^{\BB}$, which we continue to denote by $\omega$.

Now, let $V$ be a geometrically irreducible subvariety of $\Sh$ defined over $\F_q$, and let $V_0$ denote its smooth locus. By \cite[Theorem 1.2]{Bockle-Gajda-Petersen}, after replacing $V_0$ by a finite cover, we have that the geometric prime-to-$p$ monodromy of $V_0$ equals the product of the geometric $\ell$-adic monodromy where $\ell$ ranges over all primes different from $p$. Suppose that for some prime $\ell \neq p$,  the geometric $\ell$-adic monodromy of $V_0$ has the Zariski closure equal to $G^{\der}_{\Q_{\ell}}$. By compatibility, the same is true for every other prime $\ell' \neq p$.  By \cite[Theorem 1.2]{cadoret2017geometric}, the geometric $\ell$-adic monodromy of $V_0$ equals the geometric $\ell$-adic monodromy of $\Sh$ for all but finitely many primes $\ell$.

\begin{defn}
We say that $V$ as above has \emph{big} geometric monodromy. For a prime $\ell$, if the geometric $\ell$-adic monodromy (and not just its Zariski closure) of $V_0$ equals that of $\Sh$, we say $V$ has \emph{maximal} geometric $\ell$-adic monodromy. If the geometric prime-to-$p$ monodromy of $V_0$ equals the geometric prime-to-$p$ monodromy of $\Sh$, we say that $V$ has \emph{maximal} geometric prime-to-$p$ monodromy.
    
\end{defn}

By replacing the level subgroup $K$ by a finite-index subgroup $K'\subset K$ which we may also assume is neat, we have that the map $V\rightarrow \ShimK$ lifts to a map $V' \rightarrow \ShimKprime$ where $V'\rightarrow V$ is finite \'etale cover, and the image of $V'$ in $\ShimKprime$ has maximal prime-to-$p$ monodromy. 

\subsection{Hecke correspondences}

We now describe the action of prime-to-$p$ Hecke correspondences, with a view towards Galois actions. To that end, let $h \in G(\A^{f,p})$. Let $K' = hKh^{-1} \cap K$. The Shimura variety $\ShimKprime$ admits canonical maps $\pi_1, \pi_2$ to $\ShimK$ and $S_{hKh^{-1}}(G,X)$, induced by the inclusions $K'\subset K$ and $K'\subset hKh^{-1}$ respectively. Moreover, we have a natural isomorphism $\iota: S_{hKh^{-1}}(G,X)\simeq \ShimK$ induced by conjugation by $h^{-1}$; write $\pi_h$ for the composition $\iota \circ \pi_2$. % maps isomorphically to $\ShimK$. By composing this canonical isomorphism with $\pi'$, we obtain another map $\pi_h: \ShimKprime \rightarrow \ShimK$. 
  The Hecke correspondence $\tau_h^K$ is defined as
\[
\begin{tikzcd}[column sep=small]
& S_{K'}(G, X) \arrow[dl, "\pi_1"'] \arrow[dr, "\pi_h= \iota \circ \pi_2"] & \\
  S_K(G, X)  &                         & S_K(G, X)
\end{tikzcd}
\]
When the level is clear, we sometimes omit the superscript $K$ and simply refer to this correspondence as $\tau_h$. We will use the following convention: 

\begin{defn}
    Let $\tau = \tau_h$ be a Hecke correspondence as above. We let $\tau^\vee$ denote the Hecke correspondence $\tau_{h^{-1}}$. 
\end{defn}

We will need the following result, which is well known. 
\begin{proposition}\label{prop: hecke same degree}
Consider the setting above. Then, the degree of the map $\pi_1$ equals the degree of the map $\pi_h$.
\end{proposition}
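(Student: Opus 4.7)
The plan is to compute both degrees as indices of compact open subgroups inside $G(\A^{f,p})$, and then to deduce the equality from the fact that the adelic group is unimodular (so Haar measure is invariant under inner automorphisms, in particular under conjugation by $h$).

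First I would unwind the Shimura variety description $S_K(G,X) = G(\Q)\backslash X \times G(\A^f)/K$ (together with the analogous expressions with $K'$ and $hKh^{-1}$). Because $K$ is assumed neat, there is no fixed-point obstruction, and the map $\pi_1$ induced by the inclusion $K' \subset K$ is simply a finite étale cover of degree equal to the index $[K:K']$. The analogous analysis for $\pi_2$ shows it is finite étale of degree $[hKh^{-1}:K']$. Since $\iota$ is an isomorphism of Shimura varieties (coming from right multiplication by $h^{-1}$ on the adelic factor), $\deg \pi_h = \deg \pi_2 = [hKh^{-1}:K']$. So the proposition reduces to the identity
\[
[K:K\cap hKh^{-1}] \;=\; [hKh^{-1}:K\cap hKh^{-1}].
\]

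Next I would establish this purely group-theoretic identity. Fix a Haar measure $\mu$ on $G(\A^{f,p})$. The group $G$ is reductive over $\Q$, so the restricted product $G(\A^{f,p})$ is unimodular, and consequently $\mu$ is invariant under the inner automorphism $g \mapsto h g h^{-1}$. In particular $\mu(hKh^{-1}) = \mu(K)$. Dividing each side by $\mu(K')$, where $K' = K \cap hKh^{-1}$, gives exactly the desired equality of indices. This is the only step where the structure of $G$ is used, and it is the essential (if elementary) point; everything else is formal.

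I do not expect a serious obstacle here. The one small subtlety to keep in mind is the possible contribution of the center to the degree of a map between Shimura varieties at finite level, but under the neatness hypothesis in Assumption \ref{assumption: shim}(3) no such contribution appears, and $\deg \pi_1$ is exactly $[K:K']$ on the nose. With those observations in place the proof is a few lines.
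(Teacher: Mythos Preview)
Your proposal is correct and follows essentially the same argument as the paper: both reduce the statement to the index identity $[K:K\cap hKh^{-1}]=[hKh^{-1}:K\cap hKh^{-1}]$ and deduce it from unimodularity of the reductive group via Haar measure. The only cosmetic difference is that the paper passes to a single prime $\ell$ and works on $G(\Q_\ell)$, whereas you argue directly on $G(\A^{f,p})$; your extra remarks about neatness are more careful than strictly necessary but harmless.
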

\begin{proof}
    We include an argument for completeness. We must prove that the index of $K' = K \cap hKh^{-1}$ in $K$ equals its index in $hKh^{-1}$. We may work prime-by-prime, so suppose that $h \in G(\Q_\ell)$ for some prime $\ell$. 
    
    As $G$ is reductive, we have that $G(\Q_\ell)$ is a unimodular group, and so a left-invariant measure will also be right-invariant. Pick such a Haar measure. Then, the index of $K'$ in $K$ is simply the ratio of their volumes. Similarly, the index of $K'$ in $hKh^{-1}$ is again the ratio of their volumes. The volume of $K$ equals that of $hKh^{-1}$ as the measure is a Haar measure, and the result holds. 
\end{proof}

The following is now well defined: 

\begin{defn}\label{def: degree of hecke}
    The degree of a Hecke correspondence $\tau_h$ is defined to be the degree of the maps $\pi_1, \pi_h$. 
\end{defn}

Let $h\in G(\Afp)$. As $\tau_h$ = $\tau_{hk}$ for $k\in K$, we may assume that $h$ is semisimple. Not all elements $h$ induce ``non-trivial'' correspondences. For instance, if $h\in K$, then $\tau_h$ is just the identity. Similarly, if $K \subset K''$ is a normal subgroup where $K''$ is also a compact open subgroup and if $h\in K''$, then $\tau_h$ will be single-valued and will be a Deck transformation for the Galois \'etale cover $\ShimK \rightarrow S_{K''}(G,X)$, and will therefore be a periodic function. In order to ensure that the Hecke correspondences we work with are not trivial in this sense, we make the following definition. 

\begin{defn}\label{def: non-trivial Hecke}
     Let $h_{\ell}\in G(\Q_\ell)$ be a semisimple element. We say that $\tau_{h_\ell}$ is a non-trivial Hecke correspondence if $h^{\ad}_{\ell} \in T(\Q_{\ell}) \setminus K_T$, where $h^{\ad}_\ell$ is the image of $h$ in $G^{\ad}$, $T\subset G^{\ad}(\Q_\ell)$ is a maximal torus containing $h^{\ad}_{\ell}$, and $K_T$ is the maximal compact subgroup of $T$. 

    Let $h = (h_{\ell})_{\ell} \in G(\Afp)$. We say that $\tau_h$ is a non-trivial Hecke correspondence if $\tau_{h_\ell}$ is non-trivial for at least one prime $\ell$.
\end{defn}

For the rest of this paper, we will work exclusively with non-trivial Hecke correspondences. Note that for a non-trivial Hecke correspondence $\tau_h$, the degree of $\tau_{h^n}$ goes to infinity with $n$.

\subsubsection{Connected components}
We work over an algebraically closed field of characteristic $p$. We remark that everything in this paragraph is well known to the experts. We introduce the following notation. 

\begin{defn}
    For $K \subset G(\Af)$, set $K^{\der} := K \cap G^{\der}(\Af)$, $K^Z := K\cap Z(G)(\A^f)$, and $K^{\SC} : = K \cap \textrm{Im}(G^{\SC}(\Af))$.
    For $K_\ell \subset G(\Q_\ell)$, set $K^{\der}_\ell := K_\ell \cap G^{\der}(\Q_\ell)$, $K^Z_\ell = K \cap Z(G)(\Q_\ell)$ and $K^{\SC}_\ell := K_\ell \cap \textrm{Im}(G^{\SC}(\Q_\ell))$. 
\end{defn}

We start with the following proposition. We will not suppress level structure in our notation. 

\begin{proposition}\label{prop: geometric connected components}
Suppose $K = \prod K_\ell \subset G(\Af)$, hyperspecial at $p$, and for every $\ell$ we either have $K_\ell = K_\ell^{\SC} \cdot K_\ell^Z$ or $K_{\ell}$ is hyperspecial. Then, for any $h\in G(\Afp)$, $\ShimKprime \xrightarrow{\pi} \ShimK$ restricted to a geometric connected component of $\ShimK$ is an irreducible finite \'etale cover. Here, $K' = K \cap hK h^{-1}$.  
\end{proposition}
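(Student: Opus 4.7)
The plan is to identify the geometric connected components adelically via Deligne's formula, reduce the irreducibility question to an equality of images in the abelianization $G^{\mathrm{ab}} := G/G^{\der}$ using strong approximation for $G^{\SC}$, and verify this locally using the two cases allowed by the hypothesis on $K_\ell$.

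\textbf{Step 1: reduction to $\pi_0$.} Since $\pi$ is finite \'etale, its restriction to the preimage of any geometric connected component $Z \subset \ShimK$ is finite \'etale and surjective (the image is open and closed in $Z$). Irreducibility of this restriction for every $Z$ is therefore equivalent to the induced map $\pi_0(\ShimKprime) \to \pi_0(\ShimK)$ being a bijection, and since surjectivity is automatic, it suffices to show injectivity.

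\textbf{Step 2: adelic reduction.} By Deligne's classical formula, pulled back to $\Fpbar$ through Kisin's integral canonical model, the geometric connected components of $\ShimK$ are in bijection with $G(\Q)_+ \backslash G(\Af)/K$, where $G(\Q)_+$ is the preimage in $G(\Q)$ of the identity component of $G^{\ad}(\R)$; similarly for $K'$. Strong approximation for $G^{\SC}$---applicable since $G^{\der}$ is $\Q$-simple and $G^{\der}(\R)$ is non-compact as part of a Shimura datum---identifies this with
$$G(\Q)_+ \backslash G(\Af)/K \;\cong\; \overline{\nu(G(\Q)_+)} \backslash G^{\mathrm{ab}}(\Af)/\nu(K),$$
where $\nu : G \to G^{\mathrm{ab}}$ is the abelianization, and the analogue holds for $K'$. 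Therefore injectivity on $\pi_0$ reduces to the equality $\nu(K) = \nu(K')$ as subgroups of $G^{\mathrm{ab}}(\Af)$.

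\textbf{Step 3: local verification.} The equality is prime-by-prime. At $\ell = p$ we have $h_p = 1$, so $K'_p = K_p$ trivially. At $\ell \neq p$ under the hypothesis $K_\ell = K_\ell^{\SC} K_\ell^Z$: the central factor $K_\ell^Z$ commutes with $h_\ell$, hence $K_\ell^Z \subset K'_\ell$; and since $K_\ell^{\SC} \subset G^{\der}(\Q_\ell) = \ker \nu$, we conclude $\nu(K_\ell) = \nu(K_\ell^Z) \subset \nu(K'_\ell) \subset \nu(K_\ell)$. At $\ell \neq p$ under the hyperspecial hypothesis: $K_\ell = \mathcal{G}(\Z_\ell)$ for a reductive $\Z_\ell$-model $\mathcal{G}$, so both $K_\ell$ and $h_\ell K_\ell h_\ell^{-1}$ surject onto the maximal compact $\mathcal{G}^{\mathrm{ab}}(\Z_\ell) = \nu(K_\ell)$ of $G^{\mathrm{ab}}(\Q_\ell)$. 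By Bruhat--Tits theory---choosing a common apartment containing the two hyperspecial vertices---$K'_\ell$ contains a parahoric subgroup of $G(\Q_\ell)$; since the Kottwitz homomorphism vanishes on parahorics, $\nu(K'_\ell)$ also equals this maximal compact.

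The main obstacle is the hyperspecial case in Step 3: while $\nu(K_\ell) = \nu(h_\ell K_\ell h_\ell^{-1})$ is tautological, verifying that the (in general non-parahoric) intersection $K'_\ell$ retains the full image under $\nu$ requires invoking Bruhat--Tits structure theory and the Kottwitz map. The alternative hypothesis $K_\ell = K_\ell^{\SC} K_\ell^Z$ is precisely the clean substitute that handles this through the central subgroup directly, and together the two cases cover all primes at which $K_\ell$ appears.
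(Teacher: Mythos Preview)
Your overall strategy—reducing to a bijection on $\pi_0$ and verifying prime-by-prime—is reasonable, but there are two genuine gaps.

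\textbf{Step 2 fails when $G^{\der}$ is not simply connected.} The identification
\[
G(\Q)_+ \backslash G(\Af)/K \;\cong\; \overline{\nu(G(\Q)_+)} \backslash G^{\mathrm{ab}}(\Af)/\nu(K)
\]
that you quote requires $G^{\der}=G^{\SC}$ (see e.g.\ Milne, \emph{Introduction to Shimura Varieties}, Theorem 5.17). In the non-simply connected case—which the paper explicitly allows, cf.\ Remark~\ref{remark: non-sc} (type $D$, non-K3)—strong approximation only lets you quotient by $\rho(G^{\SC}(\Af))$, not by all of $G^{\der}(\Af)=\ker\nu$. The correct reduction is therefore not to $\nu(K)=\nu(K')$ but to $K = K^{\SC}\cdot K'$, i.e.\ transitivity of $K^{\SC}$ on $K/K'$. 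This is exactly the statement the paper proves directly, by identifying the geometric monodromy action on the fibers of $\pi$ with the action of $K^{\SC}$ on $K/K'$.

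\textbf{Step 3, hyperspecial case, is also incorrect as written.} The intersection $K'_\ell = K_\ell \cap h_\ell K_\ell h_\ell^{-1}$ does \emph{not} in general contain a parahoric: already for $G=\SL_2$, $K_\ell=\SL_2(\Z_\ell)$, $h_\ell=\mathrm{diag}(\ell,\ell^{-1})$, the group $K'_\ell = \{g\in\SL_2(\Z_\ell): \ell^2\mid g_{12}\}$ fixes two vertices at distance $2$ in the tree and hence contains no Iwahori. What $K'_\ell$ \emph{does} contain is $T(\Z_\ell)$ for the torus $T$ corresponding to a common apartment; this would give $\nu(K'_\ell)=\nu(K_\ell)$, but as just explained that is not enough when $G^{\der}\neq G^{\SC}$. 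The paper's argument is sharper: it uses the Cartan decomposition to place $h_\ell$ in a fixed maximally split maximal torus $T$, and then shows that a full set of coset representatives for $K_\ell^{\SC}$ in $K_\ell$ can be chosen inside $T(\Z_\ell)$ (using Remark~\ref{remark: non-sc} for the extra $K_\ell^{\der}/K_\ell^{\SC}$ piece in type $D$). Since these representatives commute with $h_\ell$, they lie in $K'_\ell$, giving $K_\ell = K_\ell^{\SC}\cdot K'_\ell$ directly.

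In the simply connected case your route would work once the parahoric claim is replaced by the torus argument; but to cover the general setting you need the finer transitivity statement the paper establishes.
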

\begin{proof}
Let $S_0$ be a geometric connected component of $\ShimK$. The action of the geometric fundamental group of $S_0$ on the fibers of $\pi$ restricted to $S_0$ is naturally identified with the action of $K^{\SC}$ on the coset space $K/K'$. Therefore, it suffices to prove that this action is transitive. The fact that $K = \prod K_\ell$ implies that $hKh^{-1} = \prod_\ell h K_\ell h^{-1}$, and therefore that $K'$ also satisfies $K' = \prod_\ell K'_\ell$. So, it suffices to prove the claim prime-by-prime. 
\begin{itemize}
    \item Suppose  $K_\ell = K_\ell^{\SC} \cdot K_\ell^Z$. Let $\alpha \in K_\ell$. We have that $\alpha = \beta \cdot z$ with $\beta \in K^{\SC}_\ell$ and $z \in K^Z_\ell$. It suffices to prove  the equality of cosets $\beta K' = \alpha K'$, i.e. that $z = \beta^{-1}\alpha \in K'$, i.e. it suffices to prove that $z\in hKh^{-1}$. This follows from the fact that $z$ is central. 

    \item Suppose that $K_\ell$ is hyperspecial, and $K^{\SC}_\ell = K^{\der}_\ell$. We will abuse notation and let $G$ also denote a reductive model over $\Z_\ell$. Given a maximally  split maximal torus $T\subset G$ defined over $\Z_\ell$, we have that the map $T(\Z_\ell) \rightarrow (G/G^{\der})(\Z_\ell)$ is surjective. 
    %To justify, it suffices to show that T\cap G^der is connected. True for GL/SL, GSp/Sp. I guess this is true in general, but I haven't yet found a reference
    It follows that there are finitely many elements $ t_1, \hdots, t_n \in T(\Z_\ell)$ such that $K_\ell = \bigcup_i  K^{\der}_\ell K^Z_\ell t_i $. Note that, as the level structure at $\ell$ is hyperspecial, by the refined Cartan decomposition (see e.g. \cite[p.51, \S 3.3.3]{tits-reductive-groups}) we may pick $h \in T(\Q_\ell)$. Now, consider the coset $\alpha K'$, where $\alpha \in K$. We may write $\alpha = \beta \cdot z \cdot t_i$. As in the first case, it suffices to show the equality of cosets $\beta K' = \alpha K'$, i.e. that $z\cdot t_i \in hKh^{-1}$. But this follows because $t_i$ and $h$ commute as they are both elements of $T$. 

    \item Suppose that $K_\ell$ is hyperspecial, and $K^{\SC}_\ell \neq K^{\der}_\ell$ is an almost hyperspecial (but not hyperspecial) subgroup of $G^{\der}(\Q_\ell)$. Then, we have that $G$ must be an even orthogonal group. An argument identical to the one above would go through if a set of coset representatives of $K^{\SC}_\ell$ in $K^{\der}_{\ell}$ could be found in $T$, where $T$ is as above. But this is true, as mentioned in Remark \ref{remark: non-sc}.
\end{itemize}

\end{proof}

We have the following consequence. 

\begin{corollary}\label{cor: can reduce level structure to get hecke irreducible}
Consider the Shimura variety $\ShimK$. At the cost of replacing $K$ by a finite index subgroup, we have that the correspondence variety associated to any Hecke correspondence restricted to a geometrically connected component of $\ShimK$ is irreducible. 

Further, let $Y\subset \ShimK$ be a geometrically irreducible subvavriety having maximal geometric prime-to-$p$ monodromy, and let $h\in G(\Afp)$. Then $\tau^K_h(Y)$ is geometrically irreducible. 
\end{corollary}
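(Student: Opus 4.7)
The plan is to reduce both assertions to Proposition \ref{prop: geometric connected components}.

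For the first assertion, I would shrink $K$ prime-by-prime so that the hypotheses of Proposition \ref{prop: geometric connected components} are satisfied. Since all but finitely many of the factors $K_\ell$ are already hyperspecial (and are thus covered by the second or third case of that proposition), only finitely many primes require attention. At each remaining prime $\ell$ I would replace $K_\ell$ by the open compact subgroup $K_\ell^{\SC} \cdot K_\ell^Z$; this is still a subgroup (since $K_\ell^Z$ is central in $G(\Q_\ell)$), and by construction the new $K_\ell$ satisfies $K_\ell = K_\ell^{\SC} \cdot K_\ell^Z$. The new $K$ remains of product form, is still hyperspecial at $p$, and neatness is preserved under passage to subgroups. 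With this $K$ in hand, for every $h \in G(\Afp)$ the hypothesis of Proposition \ref{prop: geometric connected components} holds, yielding the first assertion.

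For the second assertion, let $Y_0 \subset Y$ denote the smooth locus and let $S_0 \subset \ShimK_{\Fpbar}$ be the unique geometric connected component containing $Y$. The restriction of $\pi_1: \ShimKprime \to \ShimK$ above $Y_0$ is a finite \'etale cover whose geometric fibers are canonically identified with the coset space $K/K'$, where $K' = K \cap hKh^{-1}$. Since $K$ and $K'$ agree at $p$, this coset space is prime-to-$p$, so the monodromy action of $\pi_1^{\et}(S_0)$ on $K/K'$ factors through the maximal pro-prime-to-$p$ quotient. By Proposition \ref{prop: geometric connected components} this action is transitive. Because $Y$ has maximal geometric prime-to-$p$ monodromy, the image of $\pi_1^{\et}(Y_0)$ in the prime-to-$p$ quotient of $\pi_1^{\et}(S_0)$ equals the image of $\pi_1^{\et}(S_0)$ itself, and therefore $\pi_1^{\et}(Y_0)$ also acts transitively on $K/K'$. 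Hence $\pi_1^{-1}(Y_0)$ is connected, so $\pi_1^{-1}(Y)$ is irreducible, and consequently $\tau^K_h(Y) = \pi_h\bigl(\pi_1^{-1}(Y)\bigr)$ is irreducible as the image of an irreducible scheme under a proper map.

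The main subtlety lies in the first step: one must check that the replacement $K_\ell \leadsto K_\ell^{\SC} \cdot K_\ell^Z$ at the finitely many bad primes produces an honest open compact subgroup and does not interfere with the other conditions of Assumption \ref{assumption: shim}. These verifications are routine but must be carried out carefully, because subsequent arguments in the paper rely on the level structure continuing to satisfy Assumption \ref{assumption: shim}.
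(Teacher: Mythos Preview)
Your proposal is correct and follows the same strategy as the paper. For the first assertion you shrink $K$ at the finitely many non-hyperspecial primes so that Proposition~\ref{prop: geometric connected components} applies; the paper does the same, writing the replacement as $K_\ell^{\der}\cdot K_\ell^Z$ rather than your $K_\ell^{\SC}\cdot K_\ell^Z$, but your choice is arguably cleaner since it matches the hypothesis of Proposition~\ref{prop: geometric connected components} verbatim. For the second assertion the paper simply invokes \cite[Proposition~5.2]{AJ}, whereas you spell out the underlying monodromy argument directly: the cover $\pi_1^{-1}(Y_0)\to Y_0$ is governed by the action on $K/K'$, which depends only on the prime-to-$p$ monodromy, and maximality of the latter for $Y$ forces this action to coincide with that of the ambient connected component. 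One small wording issue: saying the action ``factors through the maximal pro-prime-to-$p$ quotient'' of $\pi_1$ is not quite the right formulation; what you actually use (and what is true) is that the action is through the prime-to-$p$ monodromy representation $\prod_{\ell\neq p}\rho_\ell$, whose image for $Y_0$ equals that for $S_0$ by hypothesis. With that adjustment the argument is complete.
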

\begin{proof}
    For the first claim, it suffices to show that every $K$ contains a compact open subgroup satisfying the hypotheses of the above proposition. At the cost of replacing $K$ by a finite index subgroup, we may assume that $K = \prod_\ell K_\ell$. Further, $K_\ell$ will be hyperspecial for $\ell \gg 1$. For the remaining $\ell$, we may replace $K_\ell$ by $K_\ell^{\der} \cdot K_\ell^Z$.

The second claim follows using the first claim in conjunction with the argument in \cite[Proposition 5.2]{AJ}.
\end{proof}

%We will need the following lemma for future use. The proof follows directly from Proposition \ref{prop: geometric connected components} \cite[Proposition 5.2]{AJ}.
%\begin{lemma}\label{lemma: hecke irred}
 %   Suppose that $K$ satisfies the assumptions of Proposition \ref{prop: geometric connected components}. Let $Y\subset \ShimK$ be a geometrically irreducible subvariety having maximal geometric prime-to-$p$ monodromy, and let $h\in G(\A^{f,p})$. Then, $\tau^K_h(Y)$ is geometrically irreducible. 
%\end{lemma}

We now turn to the question of connected components of $\ShimK$. Let $T = G/G^{\der}$. The set of connected components of $\ShimK$ are  in natural bijection with the points  of the zero-dimensional Shimura variety associated with $T$ (with level structure induced by $K$) (see \cite[Section 5]{Milneintro}), which we denote by $\Sh_T$. Let $h\in G(\Afp)$ be some element, and let $h_T$ be its image in $T(\Afp)$. The Hecke correspondence $\tau_h$ induces a correspondence on the set of geometric connected components of $\ShimK$. This correspondence is the same as the correspondence induced on $\Sh_T$ by $\tau_{h_T}$. However, as $T$ is abelian, $h_T$ is a morphism. Therefore, we have the following proposition. 

\begin{proposition}\label{prop: heck connected}
    Let $S_0 \subset \ShimK$ be a connected component over an algebraically closed field, and let $h\in G(\Afp)$. Then, there is a unique connected component $S_1$ of $\ShimK$ such that $\tau_h(S_0) = S_1$. Further, if $h \in G^{\der}$, then $S_1 = S_0$. 
\end{proposition}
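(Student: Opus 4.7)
The plan is to reduce the statement to the zero-dimensional Shimura variety $\Sh_T$ for $T = G/G^{\der}$, where the Hecke action degenerates to a genuine morphism because $T$ is commutative.

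First I would record the identification, recalled in the paragraph immediately preceding the statement, of the set of geometric connected components of $\ShimK$ with $\Sh_T$ (with level structure $K_T$ induced by the image of $K$ under $G \to T$). This identification is equivariant for the Hecke action: the correspondence $\tau_h$ on $\ShimK$ induces, on the set of geometric connected components, exactly the correspondence $\tau_{h_T}$ on $\Sh_T$, where $h_T$ denotes the image of $h$ in $T(\Afp)$. Thus it suffices to analyze $\tau_{h_T}$ on $\Sh_T$.

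Next I would use that $T$ is commutative to show $\tau_{h_T}$ is single-valued. Indeed, for $K_T' \defeq K_T \cap h_T K_T h_T^{-1}$, commutativity of $T(\Afp)$ yields $h_T K_T h_T^{-1} = K_T$, so $K_T' = K_T$. Hence in the correspondence diagram
\[
\begin{tikzcd}[column sep=small]
& S_{K_T'}(T,\{*\}) \arrow[dl, "\pi_1"'] \arrow[dr, "\pi_{h_T}"] & \\
  \Sh_T  &                         & \Sh_T
\end{tikzcd}
\]
both maps $\pi_1$ and $\pi_{h_T}$ are isomorphisms, so $\tau_{h_T}$ is a well-defined morphism $\Sh_T \to \Sh_T$. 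It therefore sends the connected component corresponding to $S_0$ to a single connected component, which defines the unique $S_1$ with $\tau_h(S_0) = S_1$.

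Finally, for the second assertion, I would observe that if $h \in G^{\der}(\Afp)$, then $h_T$ is trivial in $T(\Afp)$, so $\tau_{h_T}$ is the identity morphism on $\Sh_T$. Consequently it fixes every connected component, giving $S_1 = S_0$. There is no hard step here; the only thing to be careful about is keeping straight that the reduction to $\Sh_T$ is compatible with the Hecke action and that commutativity of $T$ collapses the correspondence into a morphism.
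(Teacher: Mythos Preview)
Your proposal is correct and follows essentially the same approach as the paper: reduce to the zero-dimensional Shimura variety $\Sh_T$ for $T=G/G^{\der}$, use commutativity of $T$ to see that $\tau_{h_T}$ is a genuine morphism rather than a multi-valued correspondence, and note that $h\in G^{\der}$ forces $h_T$ to be trivial. Your explicit verification that $K_T'=K_T$ is a helpful elaboration of the paper's terse remark that ``as $T$ is abelian, $\tau_{h_T}$ is a morphism.''
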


For the purposes of the main theorem, it suffices to work with Hecke correspondences that preserve connected components, and so we will assume implicitly  that this is the case for the rest of the paper.

%We set up some notation. Let $g\in G(\Q_\ell)$ be some element and let $\tau_g$ denote the Hecke correspondence induced by $g$. We set $\Shtil$ to be the correspondence variety, with maps $\pi_i: \Shtil \rightarrow \Sh$, where $\tau_g(x) := \pi_2(\pi_1^{-1})(x)$. Note that $\Shtil$ is a level cover of $\Sh$. We have the following duality result about Hecke correspondences. 

%\begin{proposition}
% We have that the correspondence defined by $y \mapsto \pi_1(\pi_2^{-1})(y) $ is the Hecke correspondence $\tau_{g^{-1}}$. Now suppose that the level structure at $\ell$ is hyperspecial. Then, we have $\deg \pi_1 = \deg \pi_2$. 
%\end{proposition}
%\begin{proof}
%The first claim follows by unwinding the definitions of Hecke correspondences. For the second claim, it suffices to prove that the index $[K_\ell:K_\ell \cap gK_\ell g^{-1}]$ equals the index $[K_{\ell}: K_{\ell} \cap g^{-1} K_\ell g]$. As $K_\ell$ is hyperspecial we may assume that $g \in T(\Q_\ell)$ where $T$ is a maximally split maximal Torus in $G_{\Z_\ell}$. It suffices to prove that there is an involution of $G_{\Z_{\ell}}$ that induces the inverse map on $T$. If $G$ is the restriction of scalars of a simple group $H$ over an unramified extension of $\Q_\ell$, the result for $G$ follows from the result for $H$. Therefore, we reduce to the case that $G$ is geometrically simple and split, or geometrically simple and an outer form of its split form. In either case, this follows by a case-by-case analysis over $\F_{\ell}$.

%\end{proof}

%It suffices to prove this for a simple group. As $G_{\Z_\ell}$ is reductive, it must b
\subsubsection{Reduction to the case of maximal monodromy}
We continue to  work over an  algebraically closed field of characteristic $p$. We will now reduce Theorem \ref{thm: intro curve} to proving the following statement.
\begin{theorem}\label{thm:maximmonodromy}
Let $C \subset \ShimK$ denote a generically ordinary curve with maximal geometric monodromy. Let $\tau_i$ denote a sequence of prime-to-$p$ Hecke correspondences with increasing degrees, and let $C_i = \tau_i(C)$. Then the Zariski closure of the set $\bigcup_i C_i$ is a union of connected components of $\ShimK$.
\end{theorem}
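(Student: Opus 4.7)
My approach follows the five-step outline given in the introduction, now simplified by the maximal-monodromy hypothesis. Let $V$ denote the Zariski closure of $\bigcup_i C_i$. After replacing $\F_q$ by a finite extension and $K$ by a finite-index subgroup as in Corollary \ref{cor: can reduce level structure to get hecke irreducible}, I may assume that $V$, $C$, and all $\tau_i$ are defined over $\F_q$, that each $\tau_i(C)$ is geometrically irreducible, and that $\tau_i^\vee(C_i) \supseteq C$ for every $i$. Since $\deg \tau_i \to \infty$ the curves $\tau_i(C)$ are distinct along an infinite subsequence, forcing $\dim V \geq 2$; after passing to an irreducible component of $V$ containing infinitely many $C_i$, I assume $V$ is irreducible.

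\textbf{Step 1: Hecke-stability of $V$.} The subvarieties $\tau_i^\vee(V)$ all contain the fixed curve $C$. Using the basic identity $\pi_1^*\omega \cong \pi_h^*\omega$ on the correspondence scheme $\ShimKprime$, together with the ampleness of $\omega$ on the Baily--Borel compactification, I would establish a bound on $\deg_\omega \tau_i^\vee(V)$ in terms of $\deg \tau_i$ and $\deg_\omega V$. Intersecting with Galois conjugates controls the field of definition up to a bounded extension of $\F_q$; combined with the finiteness of subvarieties of bounded degree over a fixed finite field, this forces the set $\{\tau_i^\vee(V)\}$ to contain only finitely many distinct elements. Consequently some non-trivial composition of the $\tau_i$'s must stabilise $V$, yielding a non-trivial Hecke correspondence $\tau$ with $\tau(V)=V$. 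This is what I would record as \autoref{thm: hecke fixed}.

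\textbf{Step 2: Weyl special points.} I next prove that the ordinary locus of $V$ contains infinitely many Weyl special points (in the sense of \autoref{defn: Weyl-special}), and moreover that infinitely many of these points are fixed by $\tau$. In characteristic zero the existence of Weyl special points is essentially an archimedean equidistribution statement, as in Chai--Oort \cite{ChaiOortJacobian}; here a genuinely characteristic-$p$ argument is required, presumably combining a counting/reduction argument on ordinary CM lifts of points of $V$ with the observation that the $\tau$-fixed locus inside $V$ has positive dimension and meets the Weyl-ordinary stratum. I expect this step to be the \emph{main obstacle} of the proof: it is where the new positive-characteristic input lies, and it substitutes for the equidistribution ingredient that is no longer available.

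\textbf{Step 3: Tate-linearity and conclusion.} Given a $\tau$-fixed Weyl special ordinary point $x \in V$, the Serre--Tate coordinates identify the formal neighbourhood of $x$ in $\ShimK$ with a formal torus on which $\tau$ acts by a linear map whose eigenvalues are generic precisely because $x$ is Weyl special. Chai's rigidity theorem \cite{Chairigidity} then applies to the formal completion $V^{\wedge}_x$ and forces it to be a formal subtorus; equivalently, $V$ is Tate-linear at $x$. Finally, the arguments of Chai, van Hoften, and D'Addezio--van Hoften, invoking in a crucial way the parabolicity conjecture proved by D'Addezio, show that a generically ordinary, Tate-linear subvariety of $\ShimK$ with maximal geometric monodromy must coincide with a union of connected components of $\ShimK$. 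This completes the proof.
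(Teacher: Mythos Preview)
Your outline matches the paper's architecture, and Steps~1 and~3 are broadly on target, but Step~2 is a genuine gap, and your sketch there does not reflect how the argument actually goes.

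On Step~1: the phrase ``a bound on $\deg_\omega \tau_i^\vee(V)$ in terms of $\deg \tau_i$ and $\deg_\omega V$'' is dangerous, since a bound depending on $\deg\tau_i$ is useless. What is needed is that the irreducible component $V_{i,0}\subset\tau_i^\vee(V)$ containing $C$ has degree bounded \emph{independently} of $i$. The mechanism is not just $\pi_1^*\omega\cong\pi_h^*\omega$: on the correspondence variety one has $n_1\deg V=\deg W_{i,0}=n_2\deg V_{i,0}$ for the two projection degrees $n_1,n_2$ of the component $W_{i,0}$; trivially $n_1\le\deg\tau_i$, but the crucial input is $n_2\ge\kappa\,\deg\tau_i$, which comes from Lemma~\ref{lemma:degree_cover} because $V_{i,0}\supset C$ has maximal monodromy. (In the fully maximal case $\kappa=1$ and in fact $\deg V_{i,0}=\deg V$.) The field-of-definition bound likewise comes from the bounded number of components, not from intersecting with Galois conjugates.

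On Step~2: the paper does \emph{not} argue via CM lifts or by showing that a $\tau$-fixed locus is positive-dimensional and meets some ``Weyl-ordinary stratum''. The existence of Weyl special points on $V$ is a Chebotarev argument. For each Weyl element $w$ one chooses a prime $\ell_w$ at which $G$ is split and, via Lang's theorem, a maximal torus $T_w\subset G_{\F_{\ell_w}}$ on whose cocharacter lattice Frobenius acts through $w$; pick a regular element $y_w\in T_w(\F_{\ell_w})$. The independence results of B\"ockle--Gajda--Petersen and Cadoret--Hui--Tamagawa, combined with Chebotarev, produce closed points $x\in V$ whose Frobenius conjugacy class in $G(\Z/\ell_w^2)$ hits a lift of $y_w$ for every $w$ simultaneously. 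A direct criterion (Proposition~\ref{prop: Weyl desiderata}) then forces the torus $T_x$ attached to the canonical lift of $x$ to be a Weyl subtorus of $G$.

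The Hecke-fixing of such an $x$ is also not arranged as you suggest. Theorem~\ref{thm: Heckefixed criterion} exhibits an explicit open, $K$-conjugation-invariant set $U\subset K_\ell$ (built from regular elements of a maximal torus containing $h$) such that if $\phi_x\in U$ then $\tau_{h^n}(x)$ contains an $\F_q$-rational point for \emph{every} $n$. One simply adds ``$\phi_x\in U$'' to the list of Chebotarev conditions above. Pigeonhole among these rational images then yields a non-trivial $\tau'$ with $x\in\tau'(x)$ and $V^{/x}\subset\tau'(V^{/x})$ (Proposition~\ref{Heckefixedpoint}). After that, your Step~3 is exactly what the paper does: Chai's rigidity, using that the Weyl torus acts with no invariants on the tangent space, gives Tate-linearity, and D'Addezio's parabolicity plus the Chai/van Hoften argument finishes.
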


The proposition below follows directly from the definitions.
\begin{proposition}\label{prop: break-up-hecke}
Let $Y\subset \ShimK$ be a geometrically irreducible subvariety with big $\ell$-adic monodromy $K'\subset K_{\ell}$. Let $Y' \subset \ShimKprime$ be an irreducible component of the fiber product $Y\times_{\ShimK} \ShimKprime$. 
Let $h\in G(\Q_\ell)$, with $\tau^K_h$ the Hecke correspondence that it induces on $\ShimK$, and let $Z \subset \tau_h(Y)$. Then, there exist finitely many $h_i \in G(\Q_{\ell})$ such that $Z' = \bigsqcup_i \tau^{K^{\prime}}_{h_i}(Y')$ maps to $Z$.
\end{proposition}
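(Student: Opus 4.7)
The proposition is essentially a statement about how a Hecke correspondence on $\ShimK$ decomposes upon pullback to a finer level structure. Write $L$ for the global level obtained from $K = \prod_{\ell'} K_{\ell'}$ by replacing $K_\ell$ with $K'$, so that in the notation of the statement $\ShimKprime = S_L$, and let $\pi \colon \ShimKprime \to \ShimK$ denote the resulting finite étale cover. The degree of this cover is $[K_\ell : K']$, and the deck group acts as a quotient of $K_\ell/K'$. Since $Y$ has $\ell$-adic monodromy image exactly $K'$, the cover $Y' \to Y$ is connected, and hence the full preimage $\pi^{-1}(Y)$ is the union of the finitely many deck-translates of $Y'$, each of which is again of the form considered in the conclusion.

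The core algebraic input is the double coset decomposition
\[
K_\ell\, h\, K_\ell \;=\; \bigsqcup_{i} K'\, h_i\, K',
\]
which is a finite disjoint union as $h_i$ ranges over representatives of $K' \backslash K_\ell h K_\ell / K'$ (compactness of $K_\ell$ ensures finiteness). I would translate this into geometry by viewing $\tau_h^K$ as a subvariety of $\ShimK \times \ShimK$ cut out adelically by the double coset $K_\ell h K_\ell$ on the $\ell$-factor. A direct check at the level of double cosets $G(\mathbb{Q}) \backslash X \times G(\mathbb{A}_f)/L$ then shows that the preimage of $\tau_h^K$ under the product cover $\ShimKprime \times \ShimKprime \to \ShimK \times \ShimK$ equals the disjoint union $\bigsqcup_i \tau_{h_i}^{K'}$ of Hecke correspondences at level $K'$, one for each double coset $K' h_i K'$.

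Applying this to $Y$ and using that $\pi^{-1}(Y)$ is the union of deck-translates of $Y'$ (and that the deck group action simply permutes the chosen $\{h_i\}$), one concludes
\[
\pi^{-1}\bigl(\tau_h^K(Y)\bigr) \;=\; \bigcup_{i} \tau_{h_i}^{K'}(Y').
\]
In particular, setting $Z' \defeq \bigsqcup_i \tau_{h_i}^{K'}(Y')$ produces a subvariety of $\ShimKprime$ whose image under $\pi$ is $\tau_h^K(Y)$, hence contains $Z$; this gives the required lift of $Z$ to level $K'$.

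The only nontrivial obstacle is the geometric incarnation of the double coset decomposition: one must verify at the adelic level that the fiber product $S_{K \cap hKh^{-1}} \times_{\ShimK} \ShimKprime$ (formed via $\pi_1$) decomposes as a disjoint union of correspondence varieties $S_{K' \cap h_i K' h_i^{-1}}$, with both projections matching up to $\tau_{h_i}^{K'}$. This is a standard adelic book-keeping exercise, and together with the monodromy input controlling $\pi^{-1}(Y)$ it yields the proposition.
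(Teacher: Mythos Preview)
The paper gives no argument beyond ``follows directly from the definitions,'' and your double-coset unpacking is exactly the intended content: decompose $K_\ell h K_\ell$ into finitely many $K' h_i K'$, and observe that the pullback of $\tau_h^K$ along $\pi\times\pi$ is the union of the $\tau_{h_i}^{K'}$. So the approach is correct and matches what the paper has in mind.

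Two small points of imprecision are worth tightening. First, your deck-translate step (``$\pi^{-1}(Y)$ is the union of deck-translates of $Y'$'') presupposes that $K'$ is normal in $K_\ell$, which is not assumed; in general the deck group is $N_{K_\ell}(K')/K'$ and need not act transitively on the components of $\pi^{-1}(Y)$. This does not actually matter: once you know $\pi^{-1}(\tau_h^K(Y)) = \bigcup_i \tau_{h_i}^{K'}(\pi^{-1}(Y))$, each component of $\pi^{-1}(Y)$ is the image of $Y'$ under some $\tau_{g}^{K'}$ with $g\in K_\ell$ (just from the adelic description of points), so the union can be rewritten over $Y'$ alone after enlarging the index set. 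Second, your $Z'$ surjects onto all of $\tau_h^K(Y)$, not onto $Z$; to get the statement as written you must select those $h_i$ for which $\pi\bigl(\tau_{h_i}^{K'}(Y')\bigr)$ lies in $Z$. This is immediate once one notes (as in the application to $C_i\subset\tau_i(C)$) that $Z$ is a union of irreducible components of $\tau_h(Y)$, so each such component is hit by at least one $\tau_{h_i}^{K'}(Y')$.
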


We now deduce Theorem \ref{thm: intro curve} from Theorem \ref{thm:maximmonodromy}.
\begin{proof}
There exists a level $K'\subset K$ with associated covering map  $\pi: \ShimKprime \rightarrow \ShimK$ and a map $h: C\rightarrow \ShimKprime$ such that the composition $\pi \circ h$ recovers the original $C$, and such that the image $\tilde{C}=h(C)$  has maximal prime-to-$p$ monodromy: indeed, for each $\ell$, we may simply take $K'_{\ell}$ to be the $\ell$-adic monodromy of $h$. Let $C_i \subset \tau_i(C)$ be some subvariety (it will be a union of irreducible curves). Now each Hecke correspondence $\tau_i$ on $\ShimK$ breaks up as a union over $j$ of $\tau'_{i,j}$ on $\ShimKprime$. By \autoref{prop: break-up-hecke}, we have that each irreducible component of $C'$ is the image of some  $\tau'_{i,j}(\tilde{C})$. As the map $\ShimKprime \rightarrow \ShimK$ is finite \'etale, to show that the $C_i$ are Zariski dense, it suffices to show that the Zariski closure of $\{\tau'_{i,j}(\tilde{C})\}$ is a union of connected components of $\ShimK'$. But this follows from Theorem \ref{thm:maximmonodromy} for the Shimura variety $\ShimKprime$.  
\end{proof}

\subsubsection{Galois-theoretic properties}
We will end this section  with a description of Galois-theoretic properties of the maps and correspondences in play; we keep the notation as in \autoref{section: shim-set-up}. To analyze the Galois action on points, we drop the assumption that we are working over an algebraically closed field. To that end, let $R$ be a characteristic $p$ field. Recall that we have fixed an element $h \in G(\Afp)$, and that $K' = K \cap hKh^{-1}$. Every point $x\in \ShimK(R)$ is equipped with a $K_{\ell}$-equivalence class of isomorphisms $t_x: \W_\ell \rightarrow T_\ell(\abvar_x)$ where $K_\ell$ acts by pre-composition. Let $g_1 = 1, g_2, \hdots, g_n$ be a set of left coset representatives for $K'_\ell$ in $K_\ell$. Then, the points in the inverse image $\pi^{-1}(x)$ are in bijection with the $K'$-equivalence classes $t_i = t_x \circ g_i: \W_\ell \rightarrow T_\ell(\abvar_x)$. The action of $\sigma \in \pi_{1,et}(R)$ on $\pi^{-1}(x)$ is by post-composing $t_i$ with $\sigma$. The element $x_i \in \pi^{-1}(x)$ corresponding to $t_i: \W_\ell \rightarrow T_\ell(\abvar_x)$ is defined over $R$ if and only if $t_i^{-1}\circ \sigma \circ t_i \in K'$ for every $\sigma \in \pi_{1,et}(R)$. As $x \in \ShimK(R)$, we already have $t_i^{-1}\circ \sigma \circ t_i \in K$. Therefore, the condition reduces to $t_i^{-1}\circ \sigma \circ t_i \in hKh^{-1}$, i.e. $h^{-1}t_i^{-1}\circ \sigma \circ t_i h \in K$. This leads to the following corollary. 

\begin{corollary}\label{cor: heckedescription}
    Let the notation be as above. The set $\tau_h(x)$ has an $R$-rational point if and only if for each $\sigma \in \pi_{1,et}(R)$, there is an element $g\in K$ such that $h^{-1} g^{-1} t^{-1} \sigma t g h \in K$.
\end{corollary}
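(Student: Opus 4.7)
The corollary is essentially a direct packaging of the trivialization calculus carried out in the paragraph immediately preceding it, so my plan is to simply carry the bookkeeping one extra step past $\ShimKprime$ to reach $\ShimK$ through $\pi_h$. First, I would recall that by definition $\tau_h(x) = \pi_h(\pi_1^{-1}(x))$, and that the preceding discussion already parametrizes $\pi_1^{-1}(x)$: each $x_g \in \pi_1^{-1}(x)$ corresponds to a coset $gK' \in K/K'$ via the $K'$-equivalence class of the trivialization $t \circ g : \W_\ell \to T_\ell(\abvar_x)$.

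The only genuinely new input needed is the effect of $\pi_h$ on trivializations. I would unwind $\pi_h = \iota \circ \pi_2$: the map $\pi_2$ is the forgetful map from $K'$-level to $hKh^{-1}$-level (so on trivializations it just weakens the equivalence class), while $\iota : S_{hKh^{-1}}(G,X) \xrightarrow{\sim} \ShimK$ is induced by conjugation by $h^{-1}$, which on the moduli interpretation amounts to right-composing the trivialization with $h$. Consequently, $\pi_h(x_g) \in \ShimK$ is the point whose associated $K$-equivalence class of trivializations is represented by $t \circ g \cdot h$.

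With this identification in hand, a point $y = \pi_h(x_g) \in \tau_h(x)$ is $R$-rational iff the $K$-orbit $\{t g h \cdot k : k \in K\}$ is preserved by every $\sigma \in \pi_{1,\mathrm{et}}(R)$, which is the condition
\[
(t g h)^{-1} \circ \sigma \circ (t g h) \in K, \quad \text{i.e.,} \quad h^{-1} g^{-1} t^{-1} \sigma t g h \in K.
\]
Hence $\tau_h(x)$ contains an $R$-rational point precisely when some $g \in K$ makes this condition hold for every $\sigma$, which is the assertion of the corollary. As a consistency check that completes the argument, I would verify that this condition is genuinely a condition on the coset $gK'$ rather than on the representative $g$: for $k' \in K'$ we have $h^{-1} k' h \in h^{-1} K' h \subset h^{-1}(hKh^{-1})h \cap h^{-1}Kh = K$, and so conjugating $h^{-1}g^{-1} t^{-1}\sigma t g h$ by $h^{-1}k'h \in K$ shows the truth of the condition is invariant under $g \mapsto gk'$.

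There is no real obstacle here beyond avoiding inversion errors in the composition order; the only subtle point is correctly identifying how $\iota$ translates into right-multiplication by $h$ on trivializations, which I would justify by appealing to the defining moduli interpretation of $\ShimK$ and $S_{hKh^{-1}}(G,X)$ (the level structures differ by the conjugation isomorphism sending a $hKh^{-1}$-trivialization $s$ to the $K$-trivialization $s \cdot h$).
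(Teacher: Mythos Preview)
Your proposal is correct and follows essentially the same route as the paper. The paper argues at the level of $\ShimKprime$, using the preceding paragraph's criterion that $x_i$ is $R$-rational iff $t_i^{-1}\sigma t_i\in hKh^{-1}$, and for the reverse implication writes $g=g_ik$ with $k\in K'\subset hKh^{-1}$ to reduce to that criterion---this is exactly your coset well-definedness check. Your version is marginally more direct in that you push the trivialization through $\pi_h$ and read off the $R$-rationality of $y_g$ in $\ShimK$ immediately; the paper instead implicitly uses that, once $x$ is $R$-rational, the conditions for $x_i\in\ShimKprime$ and $y_i=\pi_h(x_i)\in\ShimK$ to be $R$-rational coincide.
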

\begin{proof}
    The forward implication follows directly from the above discussion. Indeed, if $y_i = \pi_h(x_i)$ is the fixed point, then we pick $g = g_i$. 

    For the reverse implication, suppose that $g$ is in the coset $g_i K'$. By the discussion above, it suffices to show that $t_i^{-1} \sigma t_i \in hKh^{-1}$, where $t_i = t\circ g_i$. We have that $k^{-1}t_i^{-1}\sigma t_i k \in hKh^{-1}$, where $g = g_i k$. The result now follows, as $k \in K' \subset  hKh^{-1}$. 
\end{proof}

We now specialize to the case when $R = \F_q$. We introduce the following definition.

\begin{defn}
Given $x \in \ShimK(\F_q)$, we let $\Frob$ denote Frobenius element in $\pi_{1,\et}(x)$. Note that $\Frob$ acts on $T_\ell(\abvar_x)$, and therefore the level structure gives an element $t^{-1} \circ \Frob\circ  t$ in $K_{\ell}$ well-defined upto conjugation. We let $\phi_x$ denote such an element.
\end{defn}
 We have the following key result. 

\begin{theorem}\label{thm: Heckefixed criterion}
    Let $h \in G(\Q_{\ell})$. Then there exists an open subset $U \subset K$ stable under $K$-conjugation, such that if $x\in \ShimK(\F_q)$ satisfies $\phi_x \in U$, then $\tau_{h^n}(x)$ contains an $\F_q$-rational point for every $n$. 
\end{theorem}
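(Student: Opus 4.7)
The plan is to apply Corollary~\ref{cor: heckedescription} to $R = \F_q$ and to $h^n$ in place of $h$. Since $\pi_{1,\et}(\F_q) \cong \widehat{\Z}$ is topologically generated by Frobenius, and since $\phi_x, g \in K$ automatically place $g^{-1} \phi_x g$ in $K$, the existence of an $\F_q$-rational point in $\tau_{h^n}(x)$ is equivalent to the existence of some $g_n \in K$ with
\[ g_n^{-1} \phi_x g_n \in K \cap h^n K h^{-n}. \]
Thus I need an open $K$-conjugation-stable $U \subset K$ such that every $\phi \in U$ admits such a $g_n$ for every $n \geq 1$.

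The key observation is that a \emph{single} $g$ works for all $n$ if $\phi$ is $K$-conjugate to an element commuting with $h$. Indeed, since $\tau_h$ depends only on the double coset $K h K$, I would use the Cartan decomposition for hyperspecial $K$ to replace $h$ by an element of a maximal $\Q_\ell$-torus $T$ of $G$ with $T(\Z_\ell) \subset K$. Setting $L := Z_G(h)$ and $Z_0 := L(\Q_\ell) \cap K$, any $z \in Z_0$ commutes with $h^n$, so $z \in K \cap h^n K h^{-n}$ for every $n$. Therefore if $\phi = g z g^{-1}$ with $z \in Z_0$ and $g \in K$, then the choice $g_n := g$ works uniformly in $n$. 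The task thus reduces to exhibiting an open $K$-conjugation-stable subset of $W := \bigcup_{g \in K} g Z_0 g^{-1}$.

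To produce such an open subset I would apply the $\Q_\ell$-analytic implicit function theorem to the conjugation map $\psi : K \times Z_0 \to K$ sending $(g, z) \mapsto g z g^{-1}$, whose image is $W$. I would choose a regular semisimple element $z_0 \in T(\Z_\ell) \subset Z_0$; such elements exist since regular semisimplicity is an open condition on $T$ and $T(\Z_\ell)$ is $\ell$-adically dense in $T$. A direct computation identifies the differential $d\psi_{(1,z_0)}$, in natural trivializations, with $(X, Y) \mapsto [X, z_0] + Y$ for $X \in \mathfrak{g}(\Q_\ell)$ and $Y \in \mathfrak{l}(\Q_\ell)$. Since $z_0$ is $G$-regular, $\ker(\operatorname{ad} z_0) = \mathfrak{t}$, so $[\mathfrak{g}, z_0]$ has codimension $\dim \mathfrak{t}$ in $\mathfrak{g}(\Q_\ell)$; adding $\mathfrak{l} \supset \mathfrak{t}$ recovers all of $\mathfrak{g}(\Q_\ell)$. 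Hence $\psi$ is submersive at $(1, z_0)$, its image contains an open neighborhood $V \subset K$ of $z_0$, and $U := \bigcup_{g \in K} g V g^{-1}$ is the desired open $K$-conjugation-stable subset of $W$.

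The main technical obstacle I foresee is not the submersion argument itself, but ensuring the preparatory reductions are clean: in particular, verifying that a suitable Cartan decomposition is available at the given hyperspecial (or almost-hyperspecial) level, including in the type $D$ situation described in Remark~\ref{remark: non-sc}; and, when the torus $T$ containing $h$ is non-split over $\Q_\ell$, producing a regular semisimple $z_0 \in T(\Z_\ell)$. The latter follows from the openness of the regular locus together with $\ell$-adic density of $T(\Z_\ell)$ in $T$, but will require unpacking in the non-split case. Neither issue seems to demand new ideas beyond careful bookkeeping.
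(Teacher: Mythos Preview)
Your approach is correct and essentially identical to the paper's: both reduce via Corollary~\ref{cor: heckedescription} to showing that the $K$-conjugates of regular elements in a torus containing $h$ form an open set, and both prove openness by computing the differential of the conjugation map and checking surjectivity at regular points. The paper simply picks $h$ semisimple in its $K$-coset and lets $Z$ be \emph{any} $\Q_\ell$-maximal torus containing $h$, then works with $Z_0 = Z(\Q_\ell)\cap K_\ell$ and its regular locus $Z_0^r$ directly; this sidesteps your Cartan-decomposition step and the attendant worries about type $D$ and non-split tori, since one never needs $Z(\Z_\ell)\subset K$ but only that $Z_0$ is open in $Z(\Q_\ell)$ and hence meets the regular locus. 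Your notation $[X,z_0]$ for the derivative in the $g$-direction should really be $(1-\Ad_{z_0})(X)$ (as $z_0$ is a group element), but the conclusion that its kernel is $\mathfrak t$ is the same, so this is cosmetic.
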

\begin{proof}
    Firstly, it suffices to work with $h$ semisimple. Let $Z\subset G$ denote a $\Q_\ell$-maximal torus of $G$ containing $h$, and let $Z_0 = Z(\Ql) \cap K_\ell$. Let $Z_0^r$ denote the subset of regular elements -- we note that $Z_0^r$ is open in $Z_0$. We make the following claim: 

    \begin{claim}
        The set $U = \{ \alpha Z_0^{r} \alpha^{-1}: \alpha \in K_\ell\}$ is an open subset of $K_\ell$.
    \end{claim}
    \begin{proof}[Proof of Claim]

        It suffices to prove that the set $\{\alpha Z_0^r \alpha^{-1}: \alpha \in K'_\ell\}$ is open where $K'_\ell$ is a suitable open subgroup of $K_\ell$. The claim essentially follows from the fact that the fibers of the  map $G/Z \times Z^{r} \rightarrow G$ are finite, but we give a sketch of the argument for completeness.
        %-- indeed, this set being open would imply the openness of the set with \alpha in a coset of K' in place of K', as conjugating Z_0^r by an element gives you an open subset of a maximal torus that consists only of regular elements. 
        Let $f: G\times Z^r \rightarrow G$ be the map $f(g,z) = gzg^{-1}$. It suffices to prove that $d_f|_{(1,z)}:  \lie G \oplus T_z Z \rightarrow T_z G$ is surjective at every point of the form $(1,z)$. %this would show that the derivative is surjective at points $(g,z)$ for $g$ sufficiently close to 1, which would prove the claim upto shrinking $K_\ell$.
        The map $d_f|_{(1,z)}$ restricted to ${T_z Z}$ is evidently just the natural inclusion. Therefore, it suffices to show that $d_f(\textrm{Lie }G) \cap T_z Z = \{0\}$ and that the kernel of $d_f$ restricted to $\textrm{Lie}G$ equals $\mathrm{Lie} Z$. Let $g$ be the map obtained by composing $f$ with translation by $z^{-1}$ on the right. Proving the above statement for $d_f$ reduces to proving that $d_g|_{\mathrm{Lie} G}: \textrm{Lie} G \rightarrow \textrm{Lie} G$ satisfies $\ker d_g$ is $\textrm{Lie} Z$ and $\textrm{Im} d_g \cap \textrm{Lie} Z$ is trivial. For $X\in \textrm{Lie} G$, we have that $d_g(X) = X - \textrm{Ad}_z(X)$. Write $X= X_Z + \sum_r X_r$, where $X_Z \in \textrm{Lie} Z$ and each $X_r$ lies in a   root-space of $Z$ with associated co-character $\lambda_r$. Then, $\textrm{Ad}_z(X) =  X_Z + \sum_r \lambda_r(z) X_r$. As $z$ is regular, we have that $\lambda_r(z) \neq 1$ for every $r$, and the claim now follows.

    \end{proof}

The set $U$ does what is required, as we now demonstrate. Indeed, suppose $x$ is such that $\phi_x = t^{-1} \Frob t \in U$, where $t$ is a representative of the $K_\ell$-equivalence class associated to $x$. Therefore, there is an element $g\in K$ such that $g^{-1}t^{-1}\Frob t g \in Z_0^r$. But $h$, and therefore every power of $h$, commutes with $Z_0$. The result now follows by applying  \autoref{cor: heckedescription}.
    
\end{proof}

\section{Stability under Hecke}
\begin{setup}\label{setup: curve-hecke-closure} We use the notation of \autoref{section: shim-set-up}, so that $(G, X)$ is a Shimura datum satisfying \autoref{assumption: shim}.
 Let $K\subset G(\Af)$ be a compact open satisfying the assumptions of Proposition \ref{prop: heck connected}, with associated Shimura variety $\ShimK$ over $\mb{F}_q$. Let $C \subset \ShimK$ be a smooth curve over $\F_q$ with maximal geometric monodromy. Let $\tau_i$ be a sequence of prime-to-$p$ Hecke correspondences whose degrees go to infinity. Let $V$ be an irreducible component defined over $\Fpbar$ of the Zariski closure of $\{ C_i :=\tau_i(C)\}_{i\geq 0}$. By re-indexing if necessary, we assume $V$ contains $C_i$ for every $i\geq 1$. We let $V_{i,0} \subset \tau^\vee_i(V)$ denote the irreducible component that contains $C$.  % where $P_i\in \tau_i(P)$, for $i\geq 1$.
\end{setup}

%The Hecke correspondences $\tau_i$ might no longer remain irreducible when restricted to $\ShimKprime$. However, each correspondence $\tau$ will decompose into a finite union of irreducible Hecke correspondences $\tau'_j$ on $\ShimKprime$

The main results for the section are the following. 
\begin{theorem}\label{thm: hecke fixed}
Let $V$ be as in the above setup. Then there exists a sequence of distinct prime-to-$p$ Hecke correspondences $\tau'_i$ that satisfy $V\subset \tau'_i(V)$.
\end{theorem}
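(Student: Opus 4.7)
The plan is to exploit the uniform degree bound on the $V_{i,0}$ coming from the maximal-monodromy hypothesis, and then to apply a pigeonhole argument in order to extract Hecke correspondences fixing $V$.

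I will first show that $\tau_i^\vee(V)$ is irreducible. Since $V\supset C$ and $C$ has maximal geometric prime-to-$p$ monodromy, $V$ too has maximal geometric monodromy; in particular, the $\ell$-adic monodromy of $V$ equals that of $\ShimK$. By the transitivity statement underlying \autoref{prop: geometric connected components}, this monodromy acts transitively on the fibers of $\pi_h\colon \ShimKprime \to \ShimK$, so $\pi_h^{-1}(V)$ is irreducible, and hence so is $\tau_i^\vee(V)=\pi_1(\pi_h^{-1}(V))=V_{i,0}$. Applying the same reasoning to $V_{i,0}$ (which also contains $C$) shows that $\pi_1^{-1}(V_{i,0})$ is irreducible; since it contains $\pi_h^{-1}(V)$ of equal dimension, the two coincide. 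Calling this common preimage $W$, both $\pi_h|_W$ and $\pi_1|_W$ are finite \'etale of degree $\deg\tau_i$.

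The next step is the degree bound. Because the two induced universal abelian schemes on $\ShimKprime$ are isogenous, the Hodge line bundle satisfies $\pi_1^*\omega=\pi_h^*\omega$. The projection formula applied in both directions then yields
$$
\deg_\omega V_{i,0}\cdot \deg\tau_i \;=\; \deg_\omega W \;=\; \deg_\omega V\cdot \deg\tau_i,
$$
so $\deg_\omega V_{i,0}=\deg_\omega V$, uniformly in $i$. After enlarging $\F_q$ if necessary, $V$ and each $V_{i,0}=\tau_i^\vee(V)$ are all defined over $\F_q$. Since the set of irreducible subvarieties of $\ShimK$ defined over $\F_q$ of $\omega$-degree at most $\deg_\omega V$ is finite, by pigeonhole there exist an infinite subset $I$ of indices and a fixed subvariety $V^*$ with $V_{i,0}=V^*$ for all $i\in I$.

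Fixing $j_0\in I$, the equality $\tau_i^\vee(V)=\tau_{j_0}^\vee(V)$ for $i\in I$ yields
$$
V \subset \tau_i\tau_i^\vee(V) = \tau_i\tau_{j_0}^\vee(V).
$$
The composite $\tau_i\tau_{j_0}^\vee$ decomposes as a finite union of Hecke correspondences $\bigcup_g \tau_g$, so by irreducibility of $V$ one has $V\subset \tau_{g_i}(V)$ for some $g_i$. The main obstacle is producing a non-trivial such $\tau_{g_i}$: passing to a subsequence so that the $\tau_i$ for $i\in I$ are pairwise distinct as Hecke correspondences, the identity does not appear in the decomposition of $\tau_i\tau_{j_0}^\vee$ for $i\neq j_0$, which forces $\tau_{g_i}$ to be non-trivial. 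Iterating such a non-trivial $\tau_{g_i}$ then produces the required sequence of distinct prime-to-$p$ Hecke correspondences $\tau'_n\defeq \tau_{g_i}^n$ satisfying $V\subset \tau'_n(V)$.
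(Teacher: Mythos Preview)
Your overall strategy---bound $\deg V_{i,0}$ uniformly, apply pigeonhole over a fixed finite field, and then combine two of the resulting equalities---is exactly the paper's. The degree bound and pigeonhole steps are fine (your irreducibility claim for $\tau_i^\vee(V)$ tacitly uses that the level $K$ satisfies the hypotheses of \autoref{prop: geometric connected components}, but this is harmless).

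The genuine gap is the last sentence. You have produced, for each $i\in I$ with $i\neq j_0$, an irreducible component $\tau_{g_i}$ of $\tau_i\tau_{j_0}^\vee$ with $V\subset\tau_{g_i}(V)$ and $\tau_{g_i}\neq\mathrm{id}$. You then fix one such $g_i$ and set $\tau'_n:=\tau_{g_i}^n$. This does not work, for two reasons. First, the $n$-fold composite $\tau_{g_i}\circ\cdots\circ\tau_{g_i}$ is not an irreducible Hecke correspondence; it decomposes as a union of various $\tau_h$. Second, even if you extract from each composite a component containing $V$, there is no reason these are distinct: for instance, already in $\mathrm{PGL}_2$, one has $T_\ell^2 = T_{\ell^2}+(\ell{+}1)\cdot\mathrm{id}$, so the identity reappears, and more generally a non-identity $g$ lying in a maximal compact other than $K_\ell$ has all its powers confined to finitely many $K$-double cosets. ``Non-identity'' is strictly weaker than ``non-trivial'' in the sense of \autoref{def: non-trivial Hecke}, and only the latter guarantees that powers escape to infinity.

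The fix is not to iterate a single $g_i$ but to let $i$ vary and control the \emph{degree} of $\tau_{g_i}$ from below. This is exactly the content of \autoref{lem:productoftaunoidentity}: every irreducible constituent of $\tau_i\tau_{j_0}^\vee$ has degree at least $\deg\tau_i/\deg\tau_{j_0}$, which tends to infinity with $i$. Hence the $\tau_{g_i}$ themselves (for $i\in I$) already furnish infinitely many distinct Hecke correspondences with $V\subset\tau_{g_i}(V)$, and no iteration is needed.
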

\begin{corollary}\label{cor: heckefixedsubvariety}
    There is some $i$, and a sequence of distinct prime-to-$p$ Hecke correspondences $\tau'_j$ such that $\tau'_j(V_{i,0}) = V_{i,0}$.
\end{corollary}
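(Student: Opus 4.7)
The plan is to upgrade the containment produced by Theorem~\ref{thm: hecke fixed} to an equality using the irreducibility of Hecke translates under maximal monodromy (Corollary~\ref{cor: can reduce level structure to get hecke irreducible}), and then to establish distinctness via a double-coset counting argument in the Hecke algebra.

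First I would observe that every $V_{i,0}$ inherits maximal geometric prime-to-$p$ monodromy: since $C \subset V_{i,0}$, the monodromy of $V_{i,0}$ dominates the (maximal) monodromy of $C$. Consequently, Corollary~\ref{cor: can reduce level structure to get hecke irreducible} applies and yields that, for every prime-to-$p$ Hecke correspondence $\rho$, the image $\rho(V_{i,0})$ is geometrically irreducible of dimension $\dim V_{i,0}$.

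Next, by Theorem~\ref{thm: hecke fixed}, choose a sequence of distinct prime-to-$p$ Hecke correspondences $\sigma_k$ with $V \subset \sigma_k(V)$, and fix any $i$ (for definiteness $i = 1$). The correspondence $\tau_i$ restricts to a correspondence between $V_{i,0}$ and $V$ whose two projections are both finite and surjective (by dimension considerations, applied to the component $W_{i,0}$ of the correspondence variety mapping onto $V_{i,0}$), so $V \subset \tau_i(V_{i,0})$. Chaining inclusions,
\[
V_{i,0} \;\subset\; \tau_i^\vee(V) \;\subset\; \tau_i^\vee(\sigma_k(V)) \;\subset\; \tau_i^\vee\bigl(\sigma_k(\tau_i(V_{i,0}))\bigr) \;=\; (\tau_i^\vee \sigma_k \tau_i)(V_{i,0}).
\]
Decomposing the composed correspondence as a finite disjoint union of elementary prime-to-$p$ Hecke correspondences $\tau_i^\vee \sigma_k \tau_i = \bigsqcup_l \rho_{k,l}$ gives $V_{i,0} \subset \bigcup_l \rho_{k,l}(V_{i,0})$. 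Each summand $\rho_{k,l}(V_{i,0})$ is irreducible of dimension $\dim V_{i,0}$, and $V_{i,0}$ is itself irreducible, so $V_{i,0}$ coincides with one summand, producing the equality $\rho_{k,l_k}(V_{i,0}) = V_{i,0}$.

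Finally I would verify that $\{\rho_{k,l_k}\}_k$ takes infinitely many distinct values. Suppose otherwise: some $\rho^* = \tau_c$ equals $\rho_{k,l_k}$ for infinitely many $k$. Writing $\tau_i = \tau_{h_i}$ and $\sigma_k = \tau_{h_k}$, the condition that $\tau_c$ appears as a summand of $\tau_{h_i^{-1}} \tau_{h_k} \tau_{h_i}$ forces $c = k_1 h_i^{-1} k_2 h_k k_3 h_i k_4$ for some $k_j \in K$; rearranging gives $h_k \in K h_i K \cdot K c K \cdot K h_i^{-1} K$, which is a finite union of double cosets. Thus $\sigma_k$ would range over only finitely many Hecke correspondences, contradicting the distinctness of the $\sigma_k$. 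Extracting a subsequence yields the desired sequence of distinct prime-to-$p$ Hecke correspondences $\tau'_j := \rho_{k_j, l_{k_j}}$ with $\tau'_j(V_{i,0}) = V_{i,0}$. The most delicate step is this distinctness argument; the containment-to-equality upgrade itself is automatic once maximal monodromy has been transferred to $V_{i,0}$.
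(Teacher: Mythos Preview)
Your argument is correct. Both your proof and the paper's hinge on the same observation---that $V_{i,0}$ has maximal geometric monodromy and hence every Hecke translate $\rho(V_{i,0})$ is irreducible---but the routes differ. The paper does \emph{not} invoke the conclusion of Theorem~\ref{thm: hecke fixed} at all: it argues directly that, by maximal monodromy of $V_{i,0}$, the component $W_{i,0}$ of the correspondence variety is the full preimage of $V_{i,0}$, which forces the \emph{equality} $\tau_i^\vee(V_{i,0}) = V$. Then it reuses the pigeonhole step from the proof of Theorem~\ref{thm: hecke fixed} to make $V_{i,0}$ independent of $i$, and finally composes $\tau_j$ with $\tau_1^\vee$ (a double product) and applies Lemma~\ref{lem:productoftaunoidentity} for distinctness. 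Your approach instead treats Theorem~\ref{thm: hecke fixed} as a black box producing $V\subset\sigma_k(V)$, and then ``conjugates'' this inclusion by $\tau_i^\vee(\cdot)\tau_i$ to transfer it to $V_{i,0}$; your distinctness argument via triple double-coset products is the natural analogue of Lemma~\ref{lem:productoftaunoidentity} in that setting. The paper's path is slightly leaner (double rather than triple products, and no need to quote Theorem~\ref{thm: hecke fixed}), while yours has the virtue of being entirely modular once Theorem~\ref{thm: hecke fixed} is in hand. One cosmetic point: your inclusion $V\subset\tau_i(V_{i,0})$ could in fact be upgraded to equality by the same irreducibility argument, which is exactly how the paper shortcuts the whole chain.
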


\subsection{Degrees of subvarieties}
In this section, we setup notation regarding degrees of subvarieties, and collect together some well known results. 

\begin{defn}
Let $\omega$ denote the determinant of the Hodge bundle on $\Sh$.    We denote by $\Sh^{\BB}$ the  Baily--Borel compactification (often also referred to as a minimal compactification) of $\Sh$: this is the projective variety obtained by taking closure of $\Sh$ under the embedding into projective space given $\omega$
    
    We refer the reader to \cite[\S 5.2]{keerthi} for more details. 
\end{defn}

\begin{proposition}\label{prop: pullbackhodge}
For level subgroups $K'\subset K$ which are both hyperspecial at $p$, with corresponding map $\pi: \Shtil\rightarrow \Sh$, there is a unique finite map  $\pi^{\BB}: \Shtil^{\BB}\rightarrow \Sh^{\BB}$ extending $\pi$. Moreover, ${\pi^{BB}}^*\omega \simeq \omega$. 

\begin{comment}
    Any prime-to-$p$ Hecke operator $\tau$ extends uniquely to a finite correspondence on $\Sh^{\BB}$; more precisely, there exists a diagram
     \begin{equation}\begin{tikzcd}[column sep=small]
& \widetilde{\Sh}^{\BB} \arrow[dl, "\pi_1"] \arrow[dr, "\pi_2"] & \\
\Sh^{\BB}  & & \Sh^{\BB}
\end{tikzcd}
\end{equation}
extending the same diagram for $\tau$. Moreover, $\pi_1^*\omega \simeq \pi_2^*\omega$.
\end{comment}
\end{proposition}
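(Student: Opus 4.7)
The plan has three ingredients: uniqueness of the extension, its existence, and the identification ${\pi^{\BB}}^{*}\omega\cong \omega$. Uniqueness is immediate: $\Shtil$ is dense in $\Shtil^{\BB}$ and $\Sh^{\BB}$ is separated (being projective), so any two morphisms $\Shtil^{\BB}\to \Sh^{\BB}$ agreeing with $\pi$ on $\Shtil$ must coincide everywhere.

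For existence, I would exploit the Proj description: in the Hodge-type setting (as in the integral-model references cited in the paper), both $\Sh^{\BB}$ and $\Shtil^{\BB}$ are obtained as $\Proj$ of the finitely generated graded rings $R_{-} = \bigoplus_{k\geq 0} H^{0}(-,\omega^{\otimes k})$, with $\omega$ ample on each. Because $K'\subset K$ corresponds to a finer level structure on the universal abelian scheme, and $\omega = \det e^{*}\Omega^{1}_{\abvar^{\uni}/-}$, one has a canonical isomorphism $\pi^{*}\omega_{\Sh}\cong \omega_{\Shtil}$ on the open part. Pullback of global sections therefore gives a graded ring homomorphism $R_{\Sh}\to R_{\Shtil}$, inducing a rational map of $\Proj$-schemes that restricts to $\pi$ on $\Shtil$. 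To upgrade this to a genuine morphism $\pi^{\BB}$, I would invoke ampleness of $\omega$ on $\Sh^{\BB}$: base-point-freeness of sufficiently high tensor powers downstairs implies base-point-freeness of the pulled-back linear system on $\Shtil^{\BB}$, so the rational map is defined everywhere. The identification ${\pi^{\BB}}^{*}\omega \cong \omega$ is then tautological, since both line bundles arise as $\mathcal{O}(1)$ of their respective $\Proj$ constructions and pullback through a morphism of $\Proj$-schemes induced by a graded ring map sends $\mathcal{O}(1)$ to $\mathcal{O}(1)$.

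Finiteness is the last step. The morphism $\pi^{\BB}$ is proper (a morphism between projective schemes). The identification ${\pi^{\BB}}^{*}\omega \cong \omega$ shows that the pullback of the ample bundle $\omega_{\Sh^{\BB}}$ remains ample on $\Shtil^{\BB}$. A proper morphism whose pullback of an ample line bundle is ample must have zero-dimensional fibres (since otherwise the ample bundle restricted to a positive-dimensional fibre would be trivial, contradicting ampleness), and hence is finite.

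The step requiring the most care is confirming that the pulled-back sections are base-point-free on all of $\Shtil^{\BB}$, not merely on the open stratum $\Shtil$. In the interior this is automatic because $\pi$ itself is a morphism; at the boundary, one needs that the pullback of a base-point-free system via the rational map remains base-point-free. This ultimately rests on ampleness of $\omega$ on $\Sh^{\BB}$ together with the known structure of the Baily--Borel boundary strata for Hodge-type integral models. Modulo invoking these standard inputs, each individual step is routine.
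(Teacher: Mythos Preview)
The paper does not actually supply a proof of this proposition: it is stated as a known fact, with the construction of the minimal compactification (and its functoriality in the level) deferred to Madapusi~Pera's work \cite{keerthi}, already cited just before. So there is no ``paper's own proof'' to compare against; you are filling in something the authors take from the literature.

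Your outline is the right one, and the uniqueness, the identification ${\pi^{\BB}}^*\omega\cong\omega$, and the deduction of finiteness from ampleness are all clean. The one genuine soft spot is exactly the one you flag: showing that the graded ring map $R_{\Sh}\to R_{\Shtil}$ induces an everywhere-defined morphism of $\mathrm{Proj}$'s, i.e.\ that the pulled-back sections have no common zero on the boundary of $\Shtil^{\BB}$. Your justification (``ampleness downstairs implies base-point-freeness of the pulled-back system upstairs'') is circular as stated, since pulling back to $\Shtil^{\BB}$ presupposes the map $\pi^{\BB}$.

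The standard way to close this without simply citing \cite{keerthi} is a normalization argument: let $X$ be the normalization of $\Sh^{\BB}$ in the function field of $\Shtil$. Then $f\colon X\to \Sh^{\BB}$ is finite, $X$ is normal projective, $X$ contains $\Shtil$ as a dense open (since $\pi$ is finite \'etale and $\Sh$ is normal), and $f^*\omega$ is ample on $X$ and restricts to $\omega$ on $\Shtil$. Because the boundary $\Sh^{\BB}\setminus\Sh$ has codimension $\ge 2$ (the relevant Koecher-type input), the same holds for $X\setminus\Shtil$, so by normality $H^0(X,f^*\omega^{\otimes k})=H^0(\Shtil,\omega^{\otimes k})=H^0(\Shtil^{\BB},\omega^{\otimes k})$. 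Hence $X$ and $\Shtil^{\BB}$ are both $\mathrm{Proj}$ of the same graded ring, giving $X\cong\Shtil^{\BB}$ and the map $\pi^{\BB}=f$. This also makes the finiteness immediate, so your ample-implies-finite step becomes redundant (though it is correct).
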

\begin{defn}
    Let $K$ be a level subgroup as in Assumption \ref{assumption: shim}, with associated Shimura variety $\Sh$. Let $X\subset \Sh$ be an irreducible subvariety. Let $n$ denote an integer such that $\omega^{\otimes n}$ is very ample, and let $\Sh^{\BB} \rightarrow \mathbb{P}^N$
    denote a 
projective embedding defined by $\omega^{\otimes n}$. We define \emph{degree} of $X$ as $\frac{1}{n^{\dim X}} \deg \bar{X}$, where $\bar{X}$ is the closure of $X$ in $\Sh^{BB}$, and the degree is with respect to the projective embedding in $\mathbb{P}^N$. Note that this is a positive rational number. 
\end{defn}

\begin{proposition}\label{prop:degree_fec}
    Let  $K'\subset K$ be level subgroups corresponding to $\pi: \Shtil\rightarrow \Sh$. Let   $X\subset \Sh$ be a subvariety and $Y$ an irreducible component of $\pi^{-1}(X)$. Then $\deg(Y)=\deg(\pi|_{Y})\deg(X)$. 
\end{proposition}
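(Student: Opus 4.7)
The plan is to reduce the claim to the standard projection formula for finite morphisms between projective varieties of the same dimension, applied to the Baily--Borel compactifications.

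First I would pass to the compactifications. Let $\bar X \subset \Sh^{\BB}$ and $\bar Y \subset \Shtil^{\BB}$ denote the Zariski closures of $X$ and $Y$ respectively. By \autoref{prop: pullbackhodge} the morphism $\pi$ extends to a finite morphism $\pi^{\BB}:\Shtil^{\BB}\to\Sh^{\BB}$ with $(\pi^{\BB})^*\omega\simeq\omega$. Since $\pi^{\BB}$ is finite, $\pi^{\BB}(\bar Y)$ is closed and contains $\pi(Y)=X$, so it equals $\bar X$; and $\bar Y$ is an irreducible component of $(\pi^{\BB})^{-1}(\bar X)$ because any larger irreducible closed subset mapping to $\bar X$ would, after intersecting with $\Shtil$, give an irreducible subvariety of $\pi^{-1}(X)$ strictly containing $Y$, contradicting $Y$ being an irreducible component. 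Hence $\pi^{\BB}$ restricts to a surjective finite map $\bar\pi:\bar Y\to\bar X$ whose degree equals $\deg(\pi|_Y)$ (the degree is computed on the common dense open $Y\subset\bar Y$, $X\subset\bar X$).

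Next I would unwind the definition of degree and use the projection formula. Fix $n$ such that $\omega^{\otimes n}$ is very ample on $\Sh^{\BB}$, giving a projective embedding $\Sh^{\BB}\hookrightarrow\mathbb{P}^N$, and similarly pick $n$ large enough that $\omega^{\otimes n}$ is very ample on $\Shtil^{\BB}$ (the same $n$ works after possibly enlarging). Then
\[
\deg X \;=\; \tfrac{1}{n^{\dim X}}\bigl(c_1(\omega^{\otimes n})^{\dim X}\cdot[\bar X]\bigr)\;=\;c_1(\omega)^{\dim X}\cdot[\bar X],
\]
and likewise for $\deg Y$ (using $\dim Y=\dim X$, since $\pi$ is finite). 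Because $(\pi^{\BB})^*\omega\simeq\omega$, the pullback of $\omega|_{\bar X}$ under $\bar\pi$ is $\omega|_{\bar Y}$. The projection formula for the finite surjective morphism $\bar\pi$ between projective varieties of the same dimension yields
\[
c_1(\omega)^{\dim Y}\cdot[\bar Y]\;=\;\deg(\bar\pi)\cdot\bigl(c_1(\omega)^{\dim X}\cdot[\bar X]\bigr).
\]
Combined with $\deg(\bar\pi)=\deg(\pi|_Y)$ and the equalities above, this gives $\deg Y=\deg(\pi|_Y)\deg X$, as required.

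The main obstacle I would be careful about is the behaviour at the Baily--Borel boundary: verifying that the closure $\bar Y$ is an irreducible component of $(\pi^{\BB})^{-1}(\bar X)$ and that the restriction $\bar\pi:\bar Y\to\bar X$ is surjective of the expected degree (i.e.\ that no new components appear over the boundary that could inflate or deflate the top intersection number). Once this is in hand, everything else is formal from \autoref{prop: pullbackhodge} and the projection formula.
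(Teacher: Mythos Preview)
Your argument is correct: once you know that $(\pi^{\BB})^*\omega\simeq\omega$ and that $\bar\pi:\bar Y\to\bar X$ is finite surjective of degree $\deg(\pi|_Y)$, the projection formula $\bar\pi_*\bigl(c_1(\bar\pi^*\omega)^d\cap[\bar Y]\bigr)=c_1(\omega)^d\cap\bar\pi_*[\bar Y]=\deg(\bar\pi)\,c_1(\omega)^d\cap[\bar X]$ gives exactly what you want. Your worry in the last paragraph is not really a danger: the projection formula holds for any proper morphism and any line bundle, so no delicate analysis of the boundary is needed, and you do not even have to check that $\bar Y$ is an irreducible component of $(\pi^{\BB})^{-1}(\bar X)$---finiteness and surjectivity of $\bar\pi$ suffice.

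The paper takes a more concrete route. Rather than invoking the projection formula, it picks $d=\dim X$ general hyperplane sections $H_1,\ldots,H_d$ of $\omega^{\otimes n}$ on $\Sh^{\BB}$ so that $\bar X\cap H_1\cap\cdots\cap H_d$ is transverse and supported in the open part $X$. Then the pullbacks $H'_i=(\pi^{\BB})^{-1}H_i$ are again hyperplane sections (by $(\pi^{\BB})^*\omega\simeq\omega$), the intersection $\bar Y\cap H'_1\cap\cdots\cap H'_d$ lands in $Y\subset\Shtil$, and one counts points directly using that $\pi|_Y:Y\to X$ is finite \'etale. This sidesteps your boundary concern by design: the hyperplanes are chosen so that everything happens over the \'etale locus, where point-counting is transparent. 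Your approach is cleaner intersection-theoretically; the paper's is more elementary and makes the role of the finite-\'etale property of $\pi$ explicit.
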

\begin{proof}
We pick an integer $n$ such that $\omega^{\otimes n}$ is very ample  on both $\Sh$ and $\Shtil$. Suppose that $\dim X = d$. We may choose $d$ hyperplane sections $H_1, \hdots, H_d$ of $\omega^{\otimes n}$ in $\Sh^{\BB}$ such that $H_1 \cap  \hdots \cap H_d $ intersects $\bar{X}$ transversely and the intersection is supported away from the boundary $\bar{X} \setminus X$. With this setup, the degree of $X$ is just $\frac{1}{n^d} \# X\cap H_1\cap  \hdots \cap H_d$. 

By Proposition \ref{prop: pullbackhodge}, the $H'_i = {\pi^{\BB}}^{-1}H_i$ are hyperplane sections of $\omega^{\otimes n}$ in $\Shtil^{\BB}$. Further, as the map $\pi$ is finite \'etale and the intersection $\bar{X}\cap H_1\cap  \hdots \cap H_n$ is supported in $\Sh$, we have that the intersection $\bar{Y}\cap H'_1 \cap \hdots \cap H'_d$ is transverse and supported in $\Shtil$. Therefore, the degree of $Y$ is just $\frac{1}{n^d} \# Y \cap H'_1 \cap \hdots \cap H'_d$. 

The proposition now follows from the fact that $Y \cap H'_1 \cap \hdots \cap H'_d = \pi^{-1} (X \cap H_1 \cap \hdots \cap H_d) \cap Y$ which  has cardinality $\deg(\pi|_Y) \cdot \# X \cap H_1 \cap \hdots\cap  H_d $, as $\pi|_{Y}$ is finite \'etale. 

%We have that $\dim (\bar{X} \setminus X) < d$. Consider the class of the co-dimension $d$ cycle given by $[\omega]^d$, where $[\omega]$ represents the class of $\omega$ in the (rational) Chow ring of $\Sh^{\BB}$. As $\Sh^{\BB}$ is projective, we may pick a representative of $[\omega]^d$ that does not intersect $\bar{X} \setminus X$ (note that the class of $[\omega]^d$ can be realized by taking the projective embedding of $\Sh^{BB}$ induced by $\omega$ and considering the intersection of $\Sh^{\BB}$ with a co-dimension $d$ plane).    
\end{proof}

We require the following results detailing how degrees interact with Hecke correspondences.

\begin{lemma}\label{lemma:degree_cover}
Suppose that $X \subset \Sh$ is a subvariety with big monodromy. Then there is a constant $\kappa >0$, depending only on the index of the monodromy of $X$,  such that the following holds. For $K'\subset  K$ any finite index subgroup,  let  $\Shtil$ be the associated Shimura variety and  $\pi: \Shtil \rightarrow \Sh$  the canonical map, and  $\tilde{X} \subset \Shtil$  any irreducible component of $\pi^{-1}X$. Then the degree of $\pi|_{\tilde{X}}$ satisfies $\kappa \deg \pi \leq \deg \pi|_{\tilde{X}} \leq \deg \pi$.

 %Let $X/\mb{F}_q$ be a variety and $i: X\rightarrow \Sh$  be a map. Suppose that $X$ has big monodromy. %Let $H$ be the image of $\rho: \pi_1(X_{\Fpbar})\rightarrow G(\mb{Z}_{\ell})$, which we assume to be finite index. 
 %There exists a constant $\kappa>0$ such that the following holds: for any Hecke operator $\tau$ at $\ell$ given by the diagram 
  %  \begin{equation}\begin{tikzcd}[column sep=small]
%& \widetilde{\Sh} \arrow[dl, "\pi_1"] \arrow[dr, "\pi_2"] & \\
%\Sh  & & \Sh
%\end{tikzcd}
%\end{equation}
%let $\tilde{X}\subset \widetilde{\Sh}$ be any irreducible component of $\pi_2^{-1}(X)$. Then the degree of $\pi_2|_{\tilde{X}}: \tilde{X}\rightarrow X$ is at least $\kappa \deg(\tau)$.
\end{lemma}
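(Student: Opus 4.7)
\medskip
\noindent\textbf{Proof plan.} The upper bound $\deg \pi|_{\widetilde{X}} \leq \deg \pi$ is immediate: the geometric fiber of $\pi|_{\widetilde{X}}$ over a point $x\in X$ is a subset of the fiber of $\pi$ over $x$, which has cardinality $\deg\pi$. So the content of the lemma is the lower bound, and the plan is to derive it from a standard orbit-counting estimate for the monodromy of a finite \'etale cover.

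My approach is to work with the \'etale covering-theoretic description of $\pi$ and its restriction to $X$. Let $\Sh_0$ be a connected component of $\Sh$ containing (the image of) $X$. By the assumptions on $K$ inherited from \autoref{setup: curve-hecke-closure} and \autoref{prop: geometric connected components}, any component of $\pi^{-1}(\Sh_0)$ maps irreducibly onto $\Sh_0$ as a finite \'etale cover. Fixing such a component, let $F$ denote its geometric fiber above a base point $x_0\in X^{\mathrm{sm}}$; then the image $H\subset \mathrm{Sym}(F)$ of $\pi_1(\Sh_0,x_0)$ acts transitively on $F$, and $|F|=\deg\pi$. Let $H_X\subset H$ denote the image of $\pi_1(X^{\mathrm{sm}},x_0)$ (this is well-defined up to $H$-conjugation). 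The components of $\pi^{-1}(X)$ are then in bijection with the $H_X$-orbits on $F$, and for the component $\widetilde{X}$ corresponding to an orbit $O$, one has $\deg(\pi|_{\widetilde{X}})=|O|$.

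The key group-theoretic input is elementary: if $H'\subset H$ has index $N$ and $H$ acts transitively on a finite set $F$, then every $H'$-orbit on $F$ has size at least $|F|/N$. Indeed, for $y\in F$, one has
\[
|H'\cdot y| \;=\; [H':\mathrm{Stab}_{H'}(y)] \;\geq\; \frac{[H':1]}{[\mathrm{Stab}_H(y):1]} \;=\; \frac{|H'|}{|H|}\cdot|F| \;=\; \frac{|F|}{N},
\]
with the analogous computation for indices of profinite subgroups using Haar measure. Applying this with $H'=H_X$, where the hypothesis ``big monodromy with index $N$'' on $X$ exactly says $[H:H_X]\leq N$, gives $\deg(\pi|_{\widetilde{X}})=|O|\geq (\deg\pi)/N$. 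Setting $\kappa=1/N$ completes the proof.

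I do not anticipate a significant obstacle here; the argument is a routine orbit-stabilizer calculation applied to the monodromy action on fibers. The only care needed is the book-keeping of components, for which the hypothesis on $K$ (via \autoref{prop: geometric connected components}) ensures that the fiber of $\pi$ over $\Sh_0$ forms a single $H$-orbit of size equal to $\deg\pi$, so that the bound $|O|/N$ is genuinely a bound in terms of $\deg\pi$ rather than the degree of some possibly smaller intermediate cover.
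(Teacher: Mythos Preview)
Your argument is correct and matches the paper's: both prove the lower bound via the elementary orbit--stabilizer fact that an index-$N$ subgroup of a group acting transitively on a finite set $F$ has every orbit of size at least $|F|/N$, applied to the monodromy action on the fiber of $\pi$. The only minor wrinkle is your appeal to \autoref{prop: geometric connected components} to ensure $|F|=\deg\pi$; that proposition is stated only for $K'=K\cap hKh^{-1}$, whereas the lemma allows arbitrary finite-index $K'\subset K$. The paper sidesteps this by working directly with the tautological transitive action of $K$ on the fiber $K/K'$ (rather than with the image $H$ of $\pi_1(\Sh_0)$) and setting $\kappa=[K:\pi_1(X,x)]^{-1}$, which goes through for any $K'$.
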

\begin{proof}
    We have that $\pi$ is a finite \'etale map. For brevity, we let $n = \deg \pi$. The second inequality is trivial. 
    
    For the first inequality, let $x\in X$ be some geometric point, and let $\{ x_1, \hdots, x_n\}$ be the fiber of $\pi$ above $x$. There is a transitive action of $K$ on $\{x_1 \hdots x_n \}$ with stabilizer (conjugates of) $K' \subset K$. Suppose that $x_1 \in \tilde{X}$. Then, we have that $\pi_1(X,x) \cdot x_1 \subset K\cdot x_1 = \{x_1, \hdots, x_n \}$ is contained in $\tilde{X}$, and $\deg \pi|_{\tilde{X}} = \# \pi_1(X,x)\cdot x_1$. We may now choose $\kappa = [K:\pi_1(X,x)]^{-1}$, and the lemma follows.   
\end{proof}

\subsection{Hecke stable subvarieties}

We let $V_{i,0}$ denote the irreducible component of $\tau_i(V)$ that contains $C$. We have the following key proposition.

\begin{proposition}\label{prop:key}
   \begin{enumerate}
        %\item $V= \tau_i(V_{i,0})$;
    \item $V_{i,0}$ has maximal (geometric) monodromy. Further, the field of definition of $V_{i,0}$ has degree  bounded independently of $i$.

    \item $\deg(V_{i,0})$ is bounded independently of $i$;
   
    \end{enumerate} 
\end{proposition}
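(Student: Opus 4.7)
The plan is to derive both parts from two ingredients: the fixed curve $C \subset V_{i,0}$ has maximal monodromy, and the Hecke diagram translates monodromy information into degree and Galois-orbit bounds via Propositions \ref{prop:degree_fec}, \ref{prop: hecke same degree} and Lemma \ref{lemma:degree_cover}. The first half of (1) is immediate: since $C \subset V_{i,0}$, the inclusion induces $\rho_\ell(\pi_1(C)) \subset \rho_\ell(\pi_1(V_{i,0})) \subset \rho_\ell(\pi_1(\Sh))$ for each $\ell \neq p$, and the outer terms coincide by the maximal-monodromy hypothesis on $C$, forcing equality throughout.

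For the field-of-definition bound, let $L$ denote the (fixed) field of definition of $V$ over $\F_q$. I would first observe that $V$ itself has big monodromy: each $\tau_i(C) \subset V$ is geometrically irreducible by Corollary \ref{cor: can reduce level structure to get hecke irreducible} and inherits Zariski-dense monodromy in $G^{\der}(\Q_\ell)$ from $C$. By \cite[Theorem 1.2]{cadoret2017geometric}, the index of the monodromy of $V$ in that of $\Sh$ is a constant $N$ depending only on $V$, not on $i$. Combined with transitivity of the $\Sh$-monodromy on the covering fiber $K/K''_i$ (Proposition \ref{prop: geometric connected components}), this bounds the number of geometric irreducible components of $\pi_{h_i^{-1}}^{-1}(V)$, and hence of $\tau_i^\vee(V)$, by $N$. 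Now for $\sigma \in \Gal(\Fpbar/L)$ the conjugate $\sigma(V_{i,0})$ is a component of $\tau_i^\vee(V)$ still containing $\sigma(C) = C$, while for $\sigma \notin \Gal(\Fpbar/L)$ one has $\sigma V \neq V$ so $\sigma(V_{i,0})$ is not a component of $\tau_i^\vee(V)$ at all. This bounds the Galois orbit of $V_{i,0}$ by $N \cdot [L:\F_q]$, uniform in $i$.

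For (2), I would write $\tau_i^\vee$ as the diagram $S_K \xleftarrow{\pi_1} S_{K''_i} \xrightarrow{\pi_{h_i^{-1}}} S_K$ and pick $\tilde{V}_{i,0}$, an irreducible component of $\pi_1^{-1}(V_{i,0}) \cap \pi_{h_i^{-1}}^{-1}(V)$ of dimension $\dim V$, so that $\tilde{V}_{i,0}$ is simultaneously a component of each individual preimage. Applying Proposition \ref{prop:degree_fec} to both projections yields
\[
\deg(\pi_1|_{\tilde{V}_{i,0}}) \cdot \deg(V_{i,0}) = \deg(\tilde{V}_{i,0}) = \deg(\pi_{h_i^{-1}}|_{\tilde{V}_{i,0}}) \cdot \deg(V).
\]
By the maximal monodromy of $V_{i,0}$ just established and Lemma \ref{lemma:degree_cover}, $\deg(\pi_1|_{\tilde{V}_{i,0}}) \geq \kappa \cdot \deg(\pi_1)$ for an explicit $\kappa > 0$, while the trivial inequality $\deg(\pi_{h_i^{-1}}|_{\tilde{V}_{i,0}}) \leq \deg(\pi_{h_i^{-1}}) = \deg(\pi_1)$ (Proposition \ref{prop: hecke same degree}) controls the numerator. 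Combining these gives $\deg(V_{i,0}) \leq \kappa^{-1} \deg(V)$, bounded uniformly in $i$. The main obstacle I anticipate is quantitative control of the monodromy of $V$: since $V$ has only big rather than maximal monodromy, one cannot invoke irreducibility of Hecke preimages directly, and the uniform bound $N$ must be extracted from Cadoret's theorem. Once that is in hand, the remaining arguments are routine bookkeeping with the Hecke diagram and the degree formalism.
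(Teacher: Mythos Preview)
Your proposal is correct and follows essentially the same route as the paper: maximal monodromy of $V_{i,0}$ comes from the inclusion $C\subset V_{i,0}$; the bound on the field of definition comes from the fact that $V$ has big monodromy, so $\tau_i^{\vee}(V)$ has a uniformly bounded number of geometric components; and part (2) is exactly the degree identity from Proposition~\ref{prop:degree_fec} together with Lemma~\ref{lemma:degree_cover} applied to $V_{i,0}$ (which has maximal monodromy) on one leg of the Hecke diagram and the trivial bound on the other. One small cleanup: your assertion that $\sigma(V_{i,0})$ is not a component of $\tau_i^{\vee}(V)$ for $\sigma\notin\Gal(\Fpbar/L)$ is neither justified nor needed---the $\Gal(\Fpbar/L)$-orbit already has size at most $N$, which suffices (in fact, since $V_{i,0}$ is the \emph{unique} component containing $C$ and $\sigma(C)=C$, every $\sigma\in\Gal(\Fpbar/L)$ fixes $V_{i,0}$, so $V_{i,0}$ is already defined over $L$).
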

\begin{proof}
By definition, $V_{i,0}$ is the irreducible component of $\tau_{i}(V)$ containing $C$, and therefore has maximal geometric monodromy since $C$ does. %The hypotheses on $C$ implies that $V_{i,0}$ has big monodromy with index bounded indepdendent of $i$. %The Galois group of $\F_q$ acts transitively on the geometric irreducible components of $\tau_i(V)$. 
The number of irreducible components of $\tau_i(V)$ is bounded independently of $i$ as $V$ has big monodromy. It follows that every irreducible component of $\tau_i(V)$ is defined over a field of degree bounded independently of $i$, whence follows the first part. 

 Abusing notation, we suppose $\tau_i$ is associated with some  element $h\in G(\Afp)$, and  let $\Shtil$ be the correspondence variety for $\tau_i$. Since  $\tau_i(V)$ contains $V_{i,0}$,  there exists an irreducible subvariety  $W_{i,0}\subset \Shtil$  such that 
\[
\pi_1(W_{i,0})=V, \ \pi_2(W_{i,0})=V_{i,0}.
\]
For brevity, let $n_1, n_2$ denote the degrees $\deg(\pi_1|_{W_{i,0}}), \ \deg(\pi_2|_{W_{i,0}})$, respectively. By \autoref{prop:degree_fec}, we have 
\begin{equation}\label{eqn:degrees}
    n_1\deg(V)  = \deg(W_{i,0})= n_2\deg(V_{i,0}).
\end{equation}

On the other hand, by \autoref{lemma:degree_cover} applied to $V_{i,0}$, there exists $\kappa>0$, independent of $i$, such that 
\[\kappa \deg(\tau_i)<n_2, \ n_1 \leq \deg(\tau_i).\]
Combining with \eqref{eqn:degrees}, we deduce 
$\kappa \deg(\tau_i) \deg(V_{i,0}) < \deg(\tau_i)\deg(V)$,
as required.

\end{proof}

\begin{proof}[Proof of \autoref{thm: hecke fixed}]
    Let $\bar{V}_{i,0}\subset \Sh^{\BB}$ denote the closure of $V_{i,0}$. By \autoref{prop:key}, the degree and field of definition of $\bar{V}_{i,0}$ are both bounded independently of $i$. Note that for each $N$, there are only finitely many $\mb{F}_q$-subvarieties of $\mb{P}^N$ with bounded degree (see for example  \cite[Exercise 3.28]{kollar2013rational}).

By the pigeonhole principle, passing to a subsequence,  we may assume  that  the $\bar{V}_{i,0}$ are the same variety for all $i$. Letting $\sigma_i\defeq \tau_1^{\vee}\tau_i$, we obtain that $V \subset \sigma_i(V)$. The theorem follows from Lemma \ref{lem:productoftaunoidentity} below.

%    For the first point, we first observe that $V\subset \tau_i(V_{i,0})$: indeed, this follows immediately from the self-adjointness of $T_i$. On the other hand,  $V_{i,0}$ has maximal monodromy since it  contains $P$, and therefore $\tau_i(V_{i,0})$ is irreducible; this implies that $V=\tau_i(V_{i,0})$.

%The second point follows from \autoref{lemma:degree} applied to $S=V_{i,0}.$
    
\end{proof}
The following result must be well known, but we include a proof for completeness.
\begin{lemma}\label{lem:productoftaunoidentity}
    Let $\tau_1$ be a fixed Hecke correspondence and let $\tau_i$ be a sequence of Hecke correspondences with degrees that go to infinity. Then the degree of the minimal irreducible Hecke correspondence in $\sigma_i \defeq \tau_1 \tau_i$ grows with $i$; more precisely, it is bounded below by $\frac{\deg \tau_i}{\deg \tau}$.
\end{lemma}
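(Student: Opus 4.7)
The plan is to exploit the multiplicativity of degree under composition of Hecke correspondences, together with a symmetry via the dual correspondence $\tau^{\vee}$. Recall that an irreducible (i.e., single double-coset) Hecke correspondence $\tau_g$ on $\ShimK$ has degree $[K : K \cap gKg^{-1}] = |KgK/K|$, and that composition is multiplicative: $\deg(\tau_g \tau_h) = \deg(\tau_g)\deg(\tau_h)$, with $\tau_g\tau_h$ decomposing into irreducible Hecke correspondences $\tau_x$ according to the decomposition of $KgK \cdot KhK$ into a disjoint union of double cosets $KxK \subset G(\Afp)$.

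Write $\tau_1 = \tau_g$ and $\tau_i = \tau_h$, and suppose $\tau_x$ is any irreducible component of $\sigma_i = \tau_g\tau_h$, so that $KxK \subseteq KgK \cdot KhK$. The key observation is to ``invert'' this by multiplying by $\tau_g^{\vee} = \tau_{g^{-1}}$. Concretely, pick $g' \in KgK$ and $h' \in KhK$ with $x = g'h'$; then $h' = g'^{-1}x$ lies in $Kg^{-1}K \cdot \{x\}$, and since $KhK$ is $K$-bi-invariant while $Kg^{-1}K \cdot KxK$ is too, we obtain $KhK \subseteq Kg^{-1}K \cdot KxK$. In correspondence language, $\tau_i = \tau_h$ appears as an irreducible component of the composition $\tau_g^{\vee}\tau_x$.

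The conclusion is now immediate from multiplicativity of degree together with \autoref{prop: hecke same degree} (which gives $\deg(\tau_g^{\vee}) = \deg(\tau_g)$): we have
\[
\deg(\tau_i) \;\leq\; \deg(\tau_g^{\vee}\tau_x) \;=\; \deg(\tau_1)\deg(\tau_x),
\]
so every irreducible component of $\sigma_i$ satisfies $\deg(\tau_x) \geq \deg(\tau_i)/\deg(\tau_1)$. Since $\deg(\tau_1)$ is fixed while $\deg(\tau_i) \to \infty$, the degree of the minimal irreducible component grows without bound.

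There is essentially no serious obstacle here; the only thing to be careful about is the standard bookkeeping for the Hecke-algebra product and the identification of degrees with cardinalities of right-coset decompositions, both of which are routine and independent of the Shimura-variety context.
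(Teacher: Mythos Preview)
Your proof is correct, and it takes a genuinely different route from the paper's. The paper fixes representatives $KhK=\bigsqcup_a Kh_a$ and $KgK=\bigsqcup_b Kg_b$, writes each irreducible component of $\sigma_i$ as $\tau_{h_ag_b}$, and then bounds $[K:K\cap h_ag_bK(h_ag_b)^{-1}]$ from below by a short chain of subgroup-index inequalities, eventually reducing to $[K:K\cap g_bKg_b^{-1}]/[K:K\cap h_a^{-1}Kh_a]=\deg\tau_i/\deg\tau_1$. Your argument instead exploits the Hecke-algebra structure: from $KxK\subset KgK\cdot KhK$ you invert to get $KhK\subset Kg^{-1}K\cdot KxK$, so $\tau_h$ occurs in $\tau_g^{\vee}\tau_x$, and then the total-degree identity $\deg(\tau_g^{\vee}\tau_x)=\deg\tau_g^{\vee}\cdot\deg\tau_x$ (together with $\deg\tau_g^{\vee}=\deg\tau_g$ from \autoref{prop: hecke same degree}) gives the bound immediately. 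Your approach is shorter and more conceptual, essentially packaging the index chase into the single observation that degree is multiplicative under composition; the paper's direct computation is more hands-on but makes the inequality completely explicit at the level of coset indices. Both are valid and yield the identical bound $\deg\tau_x\ge \deg\tau_i/\deg\tau_1$.
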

\begin{proof}
Suppose that $\tau_1 = \tau_h$ and $\tau_i = \tau_g$, where $g,h \in G(\Afp)$. Write $KhK = \bigsqcup_{a = 1}^n K h_a$ and $KgK = \bigsqcup_{b=1}^m K g_m$. We have that $n$ and $m$ are the degrees of $\tau$ and $\tau_i$ respectively. Then, the irreducible components of $\sigma_i$ each have the form $\tau_{h_a g_b}$. The degree of this correspondence equals $[K: K \cap c_{a,b}Kc_{a,b}^{-1}]$ where $c_{a,b} = h_ag_b$. We have % $[K: K \cap c_{a,b}Kc_{a,b}^{-1}] \geq [K \cap g_b K g_b^{-1}: K \cap g_b K g_b^{-1} \cap c_{a,b}Kc_{a,b}^{-1}] = \frac{[K :K \cap g_b K g_b^{-1} \cap c_{a,b}Kc_{a,b}^{-1}]}{[K : K \cap g_b K g_b^{-1}]} \geq \frac{[K: K \cap c_{a,b}Kc_{a,b}^{-1}]}{[K: K \cap g_b K g_b^{-1}]} = \frac{\deg \tau_i}{\deg \tau}$. 

\begin{align*}[K: K \cap c_{a,b}Kc_{a,b}^{-1}] &\geq [K \cap h_a K h_a^{-1}: K \cap h_a K h_a^{-1} \cap c_{a,b}Kc_{a,b}^{-1}] \\
&= \frac{[h_aKh_a^{-1} :K \cap h_a K h_a^{-1} \cap c_{a,b}Kc_{a,b}^{-1}]}{[h_aKh_a^{-1} : K \cap h_a K h_a^{-1}]} \\
&\geq \frac{[h_aKh_a^{-1}: h_aKh_a^{-1} \cap c_{a,b}Kc_{a,b}^{-1}]}{[h_aKh_a^{-1} : K \cap h_a K h_a^{-1}]}\\
&= \frac{[K: K \cap g_bKg_b^{-1}]}{[K: K \cap h_a^{-1} K h_a]} \\
&= \frac{\deg \tau_i}{\deg \tau},
\end{align*}
as required.
 
\end{proof}

We finish this section by proving \autoref{cor: heckefixedsubvariety}.
\begin{proof}
Let $W_{i,0}$ be as in the proof of Proposition \ref{prop:key}. We have that $W_{i,0} \subset \pi_1^{-1} V_{i,0}$ where $\pi_1$ is the first projection map for the level cover induced by $\tau^\vee$. As $V_{i,0}$ has maximal monodromy at $\ell$, we see that the inclusion must be an equality. Therefore, the infinite sequence of Hecke correspondences $\tau_i^{\vee}$ satisfies $\tau_i^\vee (V_{i,0}) = V$. The same argument as in the last paragraph of the proof of Theorem \ref{thm: hecke fixed} yields the corollary.
\end{proof}

%\begin{lemma}\label{lemma:comparable_deg}
 %   Let $X \rightarrow  \Sh$ be as above, and suppose that $X$ has big monodromy. Then there exists constants $\alpha, \beta>0$ such that for any Hecke operator $\tau$, and  $Y$ any irreducible component of $\tau(X)$, the following inequalities hold:
  %  \[\deg(X)<\alpha \deg(Y)< \beta \deg(X).\]
%\end{lemma}

\section{Tate-linearity and proof of \autoref{thm: intro curve}}

We keep the notation as in \autoref{setup: curve-hecke-closure}. We fix the level $K$ throughout and thereore, for brevity, we write simply $\Sh$ for $\ShimK$ and $\Sh_W$ for the model over $W$. Recall that, by \autoref{cor: heckefixedsubvariety}, we may assume $V_{i,0} = \tau_i(V)$ for an infinite sequence of Hecke correspondences $\tau_i$. Proving that $V_{i,0}$ is an entire connected component of $\Sh$ implies \autoref{thm:maximmonodromy} (and therefore, \autoref{thm: intro curve}) and so we may assume that $V$ has maximal geometric monodromy and that there is an infinite sequence of Hecke correspondences $\tau_i$ such that $\tau_i(V) = V$. In fact, we may assume that $\tau_1 = \tau_h$ and $\tau_n = \tau_{h^n}$ for $h\in G(\Afp)$. The main technical theorem that we prove in this section is: 
\begin{theorem}\label{thm:Tate-linear}
    The subvariety $V$ is Tate linear. 
\end{theorem}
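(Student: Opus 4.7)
The plan is to exhibit a Weyl special point $x \in V(\Fpbar)$ that is fixed by the Hecke correspondence $\tau = \tau_h$, and then invoke the rigidity theorem of Chai \cite{Chairigidity} at this point. Granted such an $x$, the formal completion $\Sh^{/x}$ carries its canonical Serre--Tate formal torus structure, the action of $\tau$ preserves this structure, and the $\tau$-stable formal subscheme $V^{/x} \subset \Sh^{/x}$ will be forced by rigidity to be a formal subtorus. This is precisely the statement that $V$ is Tate-linear at $x$, and since Tate-linearity can be checked at a single ordinary point, we will conclude.

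Producing $x$ is the main content. First I would use \autoref{thm: Heckefixed criterion} to obtain a non-empty $K$-conjugation stable open subset $U \subset K$ such that any $y \in \Sh(\F_q)$ with Frobenius class $\phi_y \in U$ satisfies $\tau^n(y)(\F_q) \neq \emptyset$ for every $n \geq 1$; inspection of the proof of that theorem shows we may take $U$ to lie inside the regular semisimple locus. The Weyl condition on a special point---that Frobenius generates a maximal torus whose Galois action realises the full Weyl group---is a further open condition, cutting out a $K$-conjugation stable open $U_W \subset K$, which one checks meets $U$ non-trivially (and we may even arrange the intersection to lie in the ordinary locus of $K$). Next, since $V$ has maximal geometric prime-to-$p$ monodromy, Deligne-style Chebotarev equidistribution applied to the $\ell$-adic local systems $T_\ell|_V$ shows that the Frobenius conjugacy classes of points in $V(\F_{q^n})$ equidistribute in $K$ (with respect to Haar measure on conjugacy classes) as $n \to \infty$. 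Hence for large enough $n$ there exist $y \in V(\F_{q^n})$ with $\phi_y \in U \cap U_W$; any such $y$ is an ordinary Weyl special point of $V$, and $\tau^n(y)$ contains an $\F_{q^n}$-point for every $n$. Since the Hecke orbit of $y$ lies in the finite set of CM points of $V$ with the same CM algebra, a pigeonhole argument yields $n_1 < n_2$ and a single point $x \in V$ fixed by $\tau^{n_2 - n_1}$. Up to replacing $\tau$ by this power (and using that Tate-linearity with respect to a power of a Hecke correspondence is equivalent to Tate-linearity with respect to the correspondence itself, by taking Zariski closures of $\tau$-orbits), we may assume $x$ is fixed by $\tau$.

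With $x$ in hand, I would apply Chai's rigidity theorem: the $h$-action on the Serre--Tate torus $\Sh^{/x}$ induces an endomorphism of its character lattice whose eigenvalues are forced, by the Weyl condition at $x$, into sufficiently generic position that the only $h$-stable formal subschemes of $\Sh^{/x}$ are the formal subtori. Applied to $V^{/x}$, which is $h$-stable at the formal level, this identifies $V^{/x}$ with a formal subtorus, and hence shows that $V$ is Tate-linear at $x$.

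The hard part will be the production of $x$: coordinating the Chebotarev equidistribution (which uses the maximal monodromy of $V$), the openness of the Weyl condition, and the Hecke-fixedness criterion of \autoref{thm: Heckefixed criterion} so as to produce a single point simultaneously ordinary, Weyl-special, and fixed by a power of $\tau$. Once such an $x$ is secured, the appeal to Chai's rigidity is essentially formal, though care will be needed to verify that the Weyl condition at $x$ is strong enough to imply the ``genericity'' hypothesis entering the version of Chai's theorem that one uses.
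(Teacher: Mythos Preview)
Your overall architecture matches the paper's: produce an ordinary Weyl special point $x\in V$ fixed by a power of $\tau$, then feed $V^{/x}$ into Chai's rigidity theorem. But two of your key steps, as written, do not go through.

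\textbf{The Weyl condition is not an open condition at a single prime.} You assert that ``Weyl-special'' cuts out a $K$-conjugation stable open $U_W\subset K$, to be intersected with $U$ and hit by Chebotarev at the single prime $\ell$. This is false: the Weyl property of $T_x$ is that the \emph{global} Galois action on $X^*(T_{x,\Qbar})$ contains $W(G_{\Qbar},T_{x,\Qbar})$, and knowing the Frobenius image at one prime tells you only one Weyl element (up to conjugacy), never the whole group. The paper's construction (Theorem~\ref{weylCMexists}) is fundamentally multi-prime: for each conjugacy class $w\in W$ one chooses a split prime $\ell_w$ and a torus $T_w\subset G_{\F_{\ell_w}}$ with Frobenius acting by $w$ (Proposition~\ref{prop:localtorus}); one then uses the independence theorem of B\"ockle--Gajda--Petersen together with Cadoret--Hui--Tamagawa (Proposition~\ref{prop: indep+cadoret}) to find $x\in V$ whose Frobenius is simultaneously conjugate to a prescribed regular element of $T_w$ modulo $\ell_w^2$ for every $w$, \emph{and} lies in $U$ at the prime carrying $\tau$. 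The Weyl-torus criterion (Proposition~\ref{prop: Weyl desiderata}) then forces the centralizer $Z_x$ of $T_x$ to be a Weyl torus, and the absence of proper $\Q$-subtori in a Weyl torus of $G^{\ad}$ forces $T_x=Z_x$. None of this can be compressed into a single open condition in $K$.

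\textbf{The input to Chai's rigidity is not ``generic eigenvalues of $h$''.} Chai's criterion (Theorem~\ref{thm:chai-criterion}) needs an automorphism of $\Sh^{/x}$ with no invariants on the tangent space. A single $\alpha\in T_x(\Q)$ coming from $\tau$ has no reason to avoid eigenvalue $1$ on every root space. The paper does not try to show this. Instead it argues: the Zariski closure $T'_x\subset T_x$ of $\langle\alpha\rangle$ is a $\Q$-subtorus, and since $T_x$ is Weyl its image in $G^{\ad}$ has no proper nonzero $\Q$-subtori, so $T'_x$ surjects onto $T_x^{\ad}$ and hence contains $T_x^{\der}=T_x\cap G^{\der}$. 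Thus all of $T_x^{\der}(\Z_p)$ stabilizes $V^{/x}$, and one checks the no-invariants hypothesis for the full maximal torus $T_x^{\der}$ acting on the unipotent radical by its roots---which is immediate. Your sketch skips precisely the step (``no $\Q$-subtori in a Weyl torus'') that converts one stabilizing element into a stabilizing maximal torus; without it, you cannot verify Chai's hypothesis.
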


We will first prove the following result:  
\begin{proposition}\label{Heckefixedpoint}
With notation as in \autoref{thm: Heckefixed criterion}, let $x$ be an $\F_q$-rational point in the smooth locus of $V$ that satisfies $\phi_x \in U$. Then there exists a non-trivial Hecke correspondence $\tau'$ and $x\in V$ such that $x\in \tau'(x)$ and $V^{/x} \subset \tau'(V^{/x}) $. 
\end{proposition}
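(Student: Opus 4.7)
The plan is to combine \autoref{thm: Heckefixed criterion} with a pigeonhole argument on the finite set of $\mathbb{F}_q$-rational points of $V$, and then transfer the resulting Hecke fixed-point relation to the formal germ of $V$ using étaleness of the Hecke projections.

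First, I apply \autoref{thm: Heckefixed criterion} to the given $x$ (with $\phi_x \in U$) to produce, for each $n \geq 1$, an $\mathbb{F}_q$-rational point $y_n \in \tau_{h^n}(x)$. Since the reductions described at the start of this section give $\tau_{h^n}(V) = V$ for every $n \geq 1$, and $x \in V$, we have $\tau_{h^n}(x) \subseteq V$, so each $y_n$ lies in $V \cap \Sh(\mathbb{F}_q)$. Because $x$ lies in the smooth locus of $V$ and both projections of each $\tau_{h^n}$ are finite étale, the smooth locus of $V$ is $\tau_{h^n}$-stable, and hence each $y_n$ is smooth on $V$. The set $V \cap \Sh(\mathbb{F}_q)$ is finite (since $\Sh$ is of finite type over $\mathbb{F}_q$), so by pigeonhole there exist indices $m < n$ and a single point $y \in V \cap \Sh(\mathbb{F}_q)$, smooth on $V$, with $y = y_m = y_n$.

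From $y \in \tau_{h^m}(x) \cap \tau_{h^n}(x)$, I extract two isogenies $\phi_m, \phi_n : \abvar_x \to \abvar_y$ whose induced maps on prime-to-$p$ Tate modules align the level structures via representatives in $K h^m K$ and $K h^n K$ respectively. The composite quasi-isogeny $\phi_n \circ \phi_m^{-1} : \abvar_y \to \abvar_y$ then aligns $t_y$ with itself via an element of $K h^{n-m} K$, which translates to the existence of a point $w$ of the correspondence variety $\Sh_{K'}$ associated with $\tau_{h^{n-m}}$ satisfying $\pi_1(w) = \pi_{h^{n-m}}(w) = y$. Setting $\tau' := \tau_{h^{n-m}}$, which is non-trivial since $n - m \neq 0$ and $\tau_h$ is non-trivial, gives $y \in \tau'(y)$.

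For the formal-germ statement, both $\pi_1$ and $\pi_{h^{n-m}}$ are finite étale and so induce isomorphisms on formal completions at $w$, realizing $\tau'$ at the pair $(y,y)$ as an automorphism $\varphi$ of $\Sh^{/y}$. The global identity $\tau'(V) = V$, together with smoothness of $V$ at $y$, gives $\varphi(V^{/y}) \subseteq V^{/y}$; applying the same reasoning to the inverse correspondence $\tau'^{\vee} = \tau_{h^{m-n}}$ (which also preserves $V$) yields the reverse inclusion, so $\varphi$ stabilizes $V^{/y}$. In particular $V^{/y} \subseteq \tau'(V^{/y})$, and relabelling $y$ as $x$ completes the proof.

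The main obstacle I expect is the moduli-theoretic translation in the third paragraph: verifying that the composite quasi-isogeny of type $h^{n-m}$ really produces a point of the correspondence variety for the single Hecke correspondence $\tau_{h^{n-m}}$, rather than of some more complicated composite correspondence. This rests on $h$ being semisimple and lying, at each prime, in a maximal torus whose intersection with $K$ is the maximal compact subgroup, which ensures the identity $K h^n K \cdot K h^{-m} K = K h^{n-m} K$ in the spherical Hecke algebra.
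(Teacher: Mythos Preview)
Your overall strategy --- apply \autoref{thm: Heckefixed criterion} to get $\F_q$-points $y_n \in \tau_{h^n}(x)$ and then pigeonhole on $V(\F_q)$ --- is exactly what the paper does. The gap is in your third paragraph, and you correctly flag it yourself at the end: the identity $Kh^nK \cdot Kh^{-m}K = Kh^{n-m}K$ is \emph{false}, even when $h$ is semisimple and lies in a maximal torus $T$ with $T(\Q_\ell)\cap K_\ell$ maximal compact. For instance, take $G=\SL_2(\Q_\ell)$, $K=\SL_2(\Z_\ell)$, $h=\mathrm{diag}(\ell,\ell^{-1})$, and $w=\begin{psmallmatrix}0&1\\-1&0\end{psmallmatrix}\in K$: then $h\cdot(wh^{-1})$ lies in $KhK\cdot Kh^{-1}K$ but has Cartan type $h^2$, so $KhK\cdot Kh^{-1}K$ contains both $K$ and $Kh^2K$. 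Thus your composite quasi-isogeny $\phi_n\circ\phi_m^{-1}$ only exhibits $y$ as a fixed point of \emph{some irreducible component} of the composite correspondence $\tau_{h^n}\circ\tau_{h^m}^\vee$, not of $\tau_{h^{n-m}}$ itself, and that component could a priori be trivial.

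The paper's repair is simple but essential. From $y\in\tau_{h^{n_i}}(x)$ for infinitely many $n_i$ one gets $x\in\tau_{h^{n_1}}^\vee\tau_{h^{n_i}}(x)$ by duality, and then one \emph{defines} $\tau'$ to be whichever irreducible component of $\tau_{h^{n_1}}^\vee\tau_{h^{n_i}}$ witnesses $x\in\tau'(x)$. Non-triviality of $\tau'$ is not automatic: it comes from \autoref{lem:productoftaunoidentity}, which shows that the minimal degree among irreducible constituents of $\tau_{h^{n_1}}^\vee\tau_{h^{n_i}}$ tends to infinity with $i$, so for $i$ large every component is non-trivial. Finally $\tau'(V)=V$ because $V$ has maximal monodromy (this is the standing reduction at the start of the section), and the formal-germ statement follows. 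Note also that this route lands back at the \emph{given} point $x$ rather than at a relabelled $y$; this matters downstream, since one eventually needs the fixed point to be the Weyl-special point produced by \autoref{weylCMexists}.
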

%\begin{proposition}\label{Heckefixedpoint}
 %   Let $V$ be as above, and assume that $C$ has maximal monodromy. Let $x\in V$ be some ordinary point such that the $\ell$-adic Frobenius at $x$ generates a split maximal torus of $G_{\Q_{\ell}}$. Then there exists a Hecke correspondence $\tau_i$ such that $x\in \tau(x)$ and $V^{/x} \subset \tau(V^{/x}) $. 
%\end{proposition}

\begin{proof}
As $\phi_x \in U$, we have that $\tau_{h^n}(x)$ contains an $\F_q$-rational point for every $n$ by \autoref{thm: Heckefixed criterion}. Therefore, there is an increasing sequence of integers $n_i$ and a point $y\in V(\F_q)$ such that $y\in \tau_{h^{n_i}}(x)$ for every $n_i$. Therefore, we have that $x\in \tau_{h^{n_1}}^{\vee}\tau_{h^{n_i}}(x)$. For $i$ large enough, we have that every irreducible component of this correspondence is non-trivial. For some such $i$, let $\tau'$ be the irreducible component of $\tau_{h^{n_1}}^{\vee}\tau_{h^{n_i}}$ that satisfies $x\in \tau'(x)$. As $V$ has maximal monodromy, we have that $\tau'(V) = V$. The result now follows.

\end{proof}

To prove \autoref{thm:Tate-linear}, the idea is to use a rigidity result due to Chai \cite[Theorem]{Chairigidity} in conjunction with Proposition \ref{Heckefixedpoint}. However, the hypothesis required to apply Chai's theorem does not automatically hold in our setting and is rather non-trivial to check. The key then is to check this hypothesis; to address this problem, we work with Weyl-special points - a notion introduced by Chai and Oort in \cite[Section 5]{ChaiOortJacobian}.

\subsection{Weyl-special points}

Let $x\in \Sh(\F_q)$ be an ordinary point, which, via the Hodge embedding $\Sh\rightarrow \mc{A}_g$, is equivalently an abelian variety $A_x$ equipped with $G$-structure. Let $\tilde{x}\in \Ag(W(\mb{F}_q))$ denote the moduli point induced by the canonical lift of $A_x$. By \cite{Noot} (also proved using different methods in \cite{ShankarHecke}), we have that $\tilde{x} \in \Sh(W(\mb{F}_q))$%\josh{notation}. 
As the canonical lift is a CM abelian variety, the point $\tilde{x}$ is a special point, and is therefore induced by zero-dimensional Shimura variety $(T_x,h)$. We therefore have a map of $\Q$-groups $T_x\rightarrow G$. Note that elements $\alpha \in T_x(\Q)$ induce (quasi)-isogenies $A_x \rightarrow A_x$ that respect $G$-structure. Further, we have that $T_x$ contains $Z(G)$, the center of $G$.

We first recall from \cite[Defn. 5.3]{ChaiOortJacobian} the definition of a Weyl sub-torus of $G$. Note that for any field $k$ with algebraic closure $\bar{k}$,  and a  reductive group $\mscr{G}/k$, with a $k$-maximal torus $\mscr{T}$, there is a natural action of $\Gal_k$ on the character group $X^*(\mscr{T}_{\bar{k}})$.
\begin{defn}\label{defn: Weyl torus}
    A maximal $\Q$-torus $T\subset G$ is said to be a Weyl subtorus if the image of the natural action of $\Gal_{\Q}$ on the character lattice of $T_{\Qbar}$, contains the Weyl group $W(G_{\Qbar},T_{\Qbar})$. 
\end{defn}
We now define the notion of an ordinary Weyl special point:
\begin{defn}[Weyl-special points]\label{defn: Weyl-special}
    We say that an ordinary point $x\in \Sh(\F_q)$ is a Weyl special point if the torus $T_x \subset G$ is a Weyl subtorus. 
\end{defn}

%The definition of Weyl-special points applies to characteristic-zero special points. However, this notion also applies to ordinary points defined over finite fields. 

%\begin{defn}\label{defn: Torus associated to ordinary point}
 %   Let $x\in S(\F)$ be an ordinary point, and let $\tilde{x} \in S$. We define $T_x \subset G$ as ... 
%\end{defn}

The main result of this subsection is to show that ordinary Weyl special points are  abundant in $\Sh$. More specifically, we prove: 

\begin{theorem}\label{weylCMexists}
    Let $V\subset \Sh$ be as above. Then there exist infinitely many Weyl special points $x\in V^{\ord}$. Further, we may assume that $\phi_x \in U$ where $U$ is as in Proposition \ref{Heckefixedpoint}.  
\end{theorem}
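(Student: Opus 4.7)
The strategy is to combine an equidistribution theorem for Frobenius conjugacy classes with an openness argument to produce infinitely many ordinary $x$ with $\phi_x \in U$ and associated CM torus $T_x$ a Weyl torus. First, translate the Weyl-special condition into a condition on $\phi_x$ at a fixed prime $\ell \neq p$. For an ordinary $\F_{q^n}$-point $x$, the endomorphism algebra of the canonical lift $\tilde{x}$ is the $\Q$-algebra $\Q[\phi_x]$ generated by Frobenius, and the CM torus $T_x \subset G$ is the $\Q$-torus it cuts out. The Galois action on the character lattice $X^*(T_{x,\overline{\Q}})$ is governed by the action of $\Gal_\Q$ on the roots of the characteristic polynomial of $\phi_x$ acting on the symplectic representation $W$; since this polynomial has $\Z$-coefficients, the splitting field --- and hence the Weyl-special condition --- can be read off from $\phi_{x,\ell}$ alone. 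Explicitly, $x$ is Weyl-special if and only if $\phi_{x,\ell}$ is regular semisimple and the Galois group of the splitting field of its characteristic polynomial contains the Weyl group $W(G,T)$, which is an open, $K_\ell$-conjugation-invariant condition.

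Second, define $\Omega \subset K_\ell$ to be the intersection of $U \cap K_\ell$ with the Weyl-genericity condition of the previous paragraph. Both sets are open and $K_\ell$-conjugation-stable, and the second is the complement (pulled back via the Chevalley map $G \to T/W(G,T)$) of a finite union of proper closed subvarieties of the Chevalley base, so has positive Haar measure. Non-emptiness of $\Omega$ follows because $U$ is non-empty, open, and conjugation-stable, and therefore meets any maximal torus in a non-empty open subset, which necessarily contains a regular element whose characteristic polynomial has full Weyl-group Galois action (such elements being Zariski-dense in any maximal torus).

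Finally, invoke equidistribution. Since $V$ has maximal geometric monodromy and is generically ordinary, for $\ell$ sufficiently large the image of $\rho_\ell$ restricted to $\pi_1(V^{\ord} \otimes \Fpbar)$ is an almost-hyperspecial subgroup of $G^{\der}(\Q_\ell)$, and the arithmetic image $H \subset K_\ell$ is open. By Deligne's equidistribution theorem applied to the compatible system of finite congruence quotients of $H$, Frobenius conjugacy classes of $V^{\ord}(\F_{q^n})$ equidistribute in $H$ with respect to Haar measure as $n \to \infty$. Combined with the Lang--Weil bound $|V^{\ord}(\F_{q^n})| \sim q^{n \dim V}$ and positivity of the Haar measure of $\Omega \cap H$, a positive proportion of $\F_{q^n}$-points satisfy $\phi_x \in \Omega$, and for $n$ large these cannot all descend to proper subfields, yielding infinitely many distinct Weyl-special $x$ with $\phi_x \in U$. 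The main obstacle is the rigorous formulation of equidistribution in the $\ell$-adic setting: Deligne's theorem produces equidistribution in finite quotients of $H$, so one must argue that the Weyl-genericity condition is detectable at some finite congruence level. This follows from the Weil bound on Frobenius eigenvalues, which gives a priori bounds on the coefficients of the characteristic polynomial, so that the splitting field of the polynomial is determined by its reduction mod $\ell^k$ for $k$ sufficiently large.
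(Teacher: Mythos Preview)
Your argument has a genuine gap at its central step: the claim that the Weyl-special condition defines an open, $K_\ell$-conjugation-invariant subset of $K_\ell$ at a single prime $\ell$ is false. The Weyl-torus property of $T_x$ is a statement about the action of $\Gal_\Q$ on $X^*(T_{x,\Qbar})$, and this global Galois action cannot be read off continuously from the $\ell$-adic element $\phi_{x,\ell}$. A general $\gamma \in K_\ell$ has characteristic polynomial in $\Z_\ell[t]$, not $\Z[t]$, so ``splitting field over $\Q$'' is undefined for it; and on the locus where the polynomial happens to be integral, its Galois group over $\Q$ is wildly discontinuous in the $\ell$-adic topology (e.g.\ $t^2-2$ and $t^2-(2+\ell^N)$ are arbitrarily $\ell$-adically close but have unrelated splitting fields over $\Q$). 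Your description of this locus as the pullback along the Chevalley map of a Zariski-open set is likewise incorrect: integral polynomials with maximal Galois group form the complement of a thin set, not of a Zariski-closed set. Hence $\Omega$ is not open, and equidistribution does not apply to it. The proposed repair via Weil bounds fails for the same reason: for $x \in V(\F_{q^n})$ the coefficients of the characteristic polynomial have size $q^{O(n)}$, so the congruence level $\ell^k$ needed to pin down the polynomial grows with $n$, whereas equidistribution at a \emph{fixed} finite level is an asymptotic in $n \to \infty$; you cannot let both tend to infinity together. Even granting the repair, congruence data at a single $\ell$ can only detect the Frobenius-at-$\ell$ conjugacy class in the global Galois group, never the whole Weyl group.

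The paper's proof is structurally different and uses \emph{several} auxiliary primes, one for each Weyl conjugacy class. For each $w \in W$ it chooses a prime $\ell_w$ at which $G$ is split and uses Lang's theorem (Proposition~\ref{prop:localtorus}) to construct a maximal torus $T_w \subset G^{\SC}_{\F_{\ell_w}}$ on which Frobenius acts through $w$. The criterion of Proposition~\ref{prop: Weyl desiderata} then says: any maximal $\Q$-torus whose mod-$\ell_w$ reduction is $\F_{\ell_w}$-conjugate to $T_w$ for every $w$ is a Weyl torus. To produce $x \in V^{\ord}$ whose Frobenius lands in the prescribed regular class modulo $\ell_w^2$ for all $w$ simultaneously (and also in $U$), the paper invokes the independence of the $\ell$-adic monodromy representations due to B\"ockle--Gajda--Petersen together with the maximal-image theorem of Cadoret--Hui--Tamagawa, and then Chebotarev density (packaged as Proposition~\ref{prop: indep+cadoret}). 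A final step shows $T_x$ equals its own centraliser, using that a Weyl torus in $G^{\ad}$ has no proper $\Q$-subtori. The key structural point you are missing is that forcing $\Gal_\Q$ to contain $W$ requires exhibiting every Weyl conjugacy class in the global Galois image, and this is done by imposing \emph{independent local} conditions at \emph{distinct} primes rather than by an openness argument at one prime.
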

We note that this recovers Chai--Oort's result which produces Weyl special points on Shimura varieties using different methods (Chai--Oort's argument uses Hilbert irreducibility -- a variant of Deligne's argument that produces special points on Shimura varieties). In \cite[Theorem 5.5]{ChaiOortJacobian}, Chai--Oort prove  that Weyl special points can be constructed to avoid finitely many sub-Shimura varieties of the ambient Shimura variety. Our method is also robust enough to recover \cite[Theorem 5.5]{ChaiOortJacobian}. However, as we do not need this stronger result, we will be content with merely proving Theorem \ref{weylCMexists}. 

The main property of Weyl tori which we will use to both prove Theorem \ref{weylCMexists} and to deduce the Tate-linearity of $V$ is that the adjoint of Weyl sub-tori do not contain non-trivial $\Q$-subtori (for example, see \cite[Proof of (5.8)]{ChaiOortJacobian}, and recall that we are assuming $G^{ad}$ is $\mb{Q}$-simple). 

We will need the following setup and preliminary propositions.
Let $T,T'$ be two split maximal tori inside a reductive group $G$, defined over an algebraically closed field. Then, there is an isomorphism between their Weyl groups $c: W(T) \rightarrow W(T')$, induced by the fact that $T$ and $T'$ are conjugate to each other. The isomorphism $c$ is only well defined up to conjugation by an element of $W(T)$, as it is induced by an isomorphism $T\rightarrow T'$ which is well defined only up to an element of $W(T)$. Therefore, an element $w'\in W(T')$ defines an element of $w\in W(T)$ up to conjugation: we say that the elements $w\in W(T)$ and $w'\in W(T')$ are \emph{conjugate} to each other.
%Here, the term conjugate refers to the fact that the Weyl-groups of $T_{\overline{\F}_{\ell'}}$ and $T'_{\F_{\ell'}}$ can be identified as the two maximal tori are conjugate. However, this identification is defined only up to conjugacy by $W$. 

\begin{proposition}\label{prop:localtorus}
     Let $\ell$ be a prime number, and  $G$  a split reductive group over $\F_{\ell}$. Suppose that $T'\subset G$ is a split maximal torus defined over $\F_{\ell}$ such that $G(\F_{\ell})$ contains a full set of coset representatives of $W(T')$. Then there is a maximal torus $T_w \subset G$ defined over $\F_{\ell}$ such that the Galois action of $\Gal(\overline{\F}_{\ell}/\F_{\ell})$ on the co-character lattice of $T_{w,\overline{\F}_{\ell}}$ contains an element of $W(T_{w,\overline{\F}_{\ell}})$ conjugate to $w \in W(T')$.
\end{proposition}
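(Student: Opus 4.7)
The plan is to construct $T_w$ as a twist of the split torus $T'$ by a Weyl-group cocycle, using Lang's theorem to realize the twist as an interior conjugate inside $G$. This is the standard correspondence between $\F_\ell$-conjugacy classes of maximal tori in a connected reductive $\F_\ell$-group and Frobenius-conjugacy classes in its Weyl group, but it is convenient to give the construction explicitly.

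First, I would lift the given Weyl element. By hypothesis, every coset in $N_G(T')(\overline{\F}_\ell)/T'(\overline{\F}_\ell) = W(T')$ has a representative in $G(\F_\ell)$, so we may pick $n_w \in N_G(T')(\F_\ell)$ whose image in $W(T')$ is $w$. Let $\sigma$ denote the Frobenius on $\overline{\F}_\ell$. Because $G$ is connected and defined over $\F_\ell$, the Lang--Steinberg theorem applied to the Lang map $L(h) = h^{-1}\sigma(h)$ produces $g \in G(\overline{\F}_\ell)$ with $g^{-1}\sigma(g) = n_w$. I will then set $T_w \defeq gT'g^{-1}$, which is a maximal torus of $G_{\overline{\F}_\ell}$.

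Next I would check that $T_w$ descends to $\F_\ell$. Using $\sigma(g) = gn_w$ and the fact that both $T'$ and $n_w$ are Frobenius-stable (indeed $n_w \in N_G(T')(\F_\ell)$), we compute
\[
\sigma(T_w) \;=\; \sigma(g)\,T'\,\sigma(g)^{-1} \;=\; g n_w T' n_w^{-1} g^{-1} \;=\; g T' g^{-1} \;=\; T_w,
\]
so $T_w$ is defined over $\F_\ell$.

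Finally, I would identify the Galois action on the cocharacter lattice. Conjugation by $g$ gives an $\overline{\F}_\ell$-isomorphism $\psi : T' \xrightarrow{\sim} T_w$, $t \mapsto gtg^{-1}$, hence an isomorphism $\psi_* : X_*(T'_{\overline{\F}_\ell}) \xrightarrow{\sim} X_*(T_{w,\overline{\F}_\ell})$. Transferring the $\sigma$-action on $T_w$ back to $T'$ via $\psi$ gives $t \mapsto \psi^{-1}(\sigma(\psi(t))) = n_w \sigma(t) n_w^{-1}$. Since $T'$ is split over $\F_\ell$, $\sigma$ acts trivially on $X_*(T'_{\overline{\F}_\ell})$, so the transferred action is pure conjugation by $n_w$, i.e.\ the action of $w \in W(T')$. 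Via the identification $\psi_*$ (which is one choice of the canonical-up-to-$W$-conjugation isomorphism between the Weyl groups of $T'$ and of $T_w$), this exhibits the Frobenius on $X_*(T_{w,\overline{\F}_\ell})$ as an element of $W(T_{w,\overline{\F}_\ell})$ conjugate to $w$, as required. There is no real obstacle beyond the bookkeeping that the Weyl-group isomorphism between $T'$ and $T_w$ is only defined up to inner conjugation, which is harmless because the conclusion is phrased up to Weyl conjugacy.
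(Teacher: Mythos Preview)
Your proof is correct and is essentially identical to the paper's own argument: both lift $w$ to an $\F_\ell$-rational representative in $N_G(T')$, apply Lang's theorem to write it as $g^{-1}\sigma(g)$, set $T_w = gT'g^{-1}$, and then compute the Frobenius action on cocharacters via the conjugation identification. You are slightly more explicit in verifying that $T_w$ is $\sigma$-stable, but otherwise the approaches coincide.
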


\begin{proof}
    We abuse notation and let $w \in G(\F_{\ell})$ denote a coset representative of $w\in W$. By Lang's theorem, there exists $\alpha \in G(\overline{\F}_{\ell})$ that satisfies $\alpha^{-1}\sigma(\alpha) = w$. We define  $T_w :=  \alpha T' \alpha^{-1}$: it is straightforward to check that it is a maximal torus of $G$ defined over $\F_{\ell}$. 

    We now compute the action of $\sigma$ on the co-character lattice of $T_w$. We may identify the co-character lattices $X_*(T')$ and $X_*(T_w)$ by conjugating by $\alpha$. Let $\mu: \G_m \rightarrow T'$ be some co-character. Then, $\sigma(\alpha \mu \alpha^{-1}) = \sigma(\alpha)\mu \sigma(\alpha^{-1}) = \alpha \alpha^{-1} \sigma(\alpha) \mu \sigma(\alpha)^{-1} \alpha \alpha^{-1} = \alpha (w\cdot \mu) \alpha^{-1}$. The proposition follows. 
    
    \end{proof}

We now give a  criterion that forces a torus to be a Weyl-torus. 
\begin{proposition}[Weyl-torus criterion]\label{prop: Weyl desiderata}
    Let $G$ be a reductive group defined over $\Z[1/N]$ and let $T\subset G$ be a maximal torus defined over $\Z[1/N]$. Let $S$ be the set of primes  not dividing $N$ at which $G$ splits. %and for each $\ell\in S$, fix $\ell'$ a power of $\ell$ such that $G(\F_{\ell'})$ contains a full set of coset representatives for $W(T')$ for a split maximal torus $T'$. For each $w\in W$, let  $T_w$ be a maximal torus output by \autoref{prop:localtorus}. 

    Suppose that for every $w$ in the Weyl group of $G_{\bar{\Q}}$, there is a prime $\ell_w \in S$  
    %$G\times \F_{\ell_w}$  satisfies the hypothesis of \autoref{prop:localtorus}, and let    
    and a maximal torus $T_w\subset G\otimes \mb{F}_{\ell_w}$ such that the Frobenius action on $X_*(T_{w, \overline{\mb{F}}_{\ell_w}})$ is conjugate to $w$.
    
    Suppose moreover that    $T \bmod \ell_w$ is conjugate over $\F_{\ell_w}$ to $T_w$. Then $T$ is a Weyl-torus of $G$. 
    
    %Let $\rho: \Gal_{\Q} \rightarrow \GL(X_*(T_{\bar{\Q}}))$ be the representation of the Galois group on the co-character lattice of $T$, with image $F'$. Given a conjugacy class $C\subset W$ of the Weyl group, suppose that there is a prime $\ell'$ such that $G$ is split at $\ell'$ and that the Frobenius element of $\Gal_{\Q_{\ell'}}$ acts on $X_*(T_{\bar{\Q}})$ through an element of $C$. Then, $T$ is a Weyl-torus. 
\end{proposition}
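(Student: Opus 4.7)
The goal is to show that the image $H \subseteq \mathrm{Aut}(X_*(T_{\Qbar}))$ of the representation $\psi \colon \Gal_\Q \to \mathrm{Aut}(X_*(T_{\Qbar}))$ contains the Weyl group $W := W(G_{\Qbar}, T_{\Qbar})$. Since $G$ is reductive over $\Q$ and $T$ is a $\Q$-torus, $\Gal_\Q$ permutes the roots in $X^*(T_{\Qbar})$, and dualising shows that $H$ normalises $W$ inside $\mathrm{Aut}(X_*(T_{\Qbar}))$; in particular $H \cap W$ is a well-defined subgroup of $W$. The strategy is to produce, for every $w \in W$, an element of $H \cap W$ which is $W$-conjugate to $w$. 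The proof then concludes by the classical observation that a proper subgroup of a finite group cannot meet every conjugacy class (since its conjugates all share the identity and therefore cover strictly fewer than $|W|$ elements); hence $H \cap W = W$, which is precisely the Weyl-torus condition.

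To realise $w \in W$ in $H \cap W$ up to $W$-conjugacy, I would fix the prime $\ell_w \in S$ and the torus $T_w/\F_{\ell_w}$ supplied by the hypothesis. Since $T$ spreads out to a torus over $\Z[1/N]$ and $\ell_w \nmid N$, the cocharacter lattice of $T_{\Z_{\ell_w}}$ is an unramified $\Gal_{\Q_{\ell_w}}$-module. Choosing a prime $\lambda \mid \ell_w$ of $\Qbar$ yields a canonical identification $X_*(T_{\Qbar}) = X_*((T \bmod \ell_w)_{\overline{\F}_{\ell_w}})$ under which the image $\sigma_w \in H$ of any Frobenius lift is precisely the geometric Frobenius action on the special fibre.

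Next, the $\F_{\ell_w}$-conjugating element $g \in G(\F_{\ell_w})$ provided by the hypothesis induces a $\Gal(\overline{\F}_{\ell_w}/\F_{\ell_w})$-equivariant isomorphism $T \bmod \ell_w \xrightarrow{\sim} T_w$, and hence a Frobenius-equivariant identification of their cocharacter lattices. Fixing a split maximal torus $T_0 \subset G_{\Z_{\ell_w}}$ (possible since $\ell_w \in S$) and any $\overline{\F}_{\ell_w}$-conjugating element from $T_{w, \overline{\F}_{\ell_w}}$ to $T_{0, \overline{\F}_{\ell_w}}$ then identifies $X_*(T_{w, \overline{\F}_{\ell_w}})$ with $X_*(T_{0, \overline{\F}_{\ell_w}})$ canonically up to the action of the Weyl group. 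The hypothesis that Frobenius on $X_*(T_{w, \overline{\F}_{\ell_w}})$ is $W$-conjugate to $w$ then translates, via the chain of identifications above, into the statement that $\sigma_w$ lies in $W$ and is $W$-conjugate to $w$, completing the construction.

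The main obstacle I anticipate is the careful bookkeeping of the Weyl-conjugacy indeterminacy in the previous paragraph: three separate identifications of cocharacter lattices are in play (between generic and special fibres of $T$; between $T \bmod \ell_w$ and $T_w$; and between $T_w$ and the reference torus $T_0$), and the final assertion that ``$\sigma_w$ is $W$-conjugate to $w$'' must be verified to be intrinsic. This reduces to the compatibility that $W(G_{\Qbar}, T_{\Qbar})$ and $W(G_{\overline{\F}_{\ell_w}}, T_{0, \overline{\F}_{\ell_w}})$ both canonically identify, up to inner conjugation, with the abstract Weyl group of the root datum of $G/\Z[1/N]$. Once this compatibility is secured, the subgroup-union lemma applied to $H \cap W \leq W$ finishes the proof.
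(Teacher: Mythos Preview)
Your proposal is correct and follows essentially the same approach as the paper's own proof: both produce, via the Frobenius at $\ell_w$, an element of the Galois image lying in the $W$-conjugacy class of each $w$, and then conclude by the classical lemma that a subgroup of a finite group meeting every conjugacy class must be the whole group. Your write-up is considerably more careful than the paper's about tracking the identifications of cocharacter lattices and Weyl groups across generic and special fibres (the paper simply asserts ``by our hypothesis, $F$ intersects every conjugacy class of $W(T_{\Qbar})$''), and your observation that $H$ normalises $W$ is true but not actually needed, since $H\cap W$ is automatically a subgroup of $W$ as an intersection of subgroups of $\GL(X_*(T_{\Qbar}))$.
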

\begin{proof}
    Let $\rho: \Gal_{\Q} \rightarrow \GL(X_*(T_{\overline{\Q}}))$ be the representation of the Galois group on the co-character lattice of $T_{\Qbar}$, say with image $F$. By our hypothesis, $F$ intersects every conjugacy class of $W(T_{\overline{\Q}})$. Therefore, $F \cap W(T_{\overline{\Q}}) \subset W(T_{\overline{\Q}})$ intersects every conjugacy class; here we view $W(T_{\Qbar})$ as a subgroup of $\GL(X_*(T_{\overline{\Q}}))$. It is a well known fact about finite groups that this forces $F \cap W(T_{\overline{\Q}}) = W(T_{\overline{\Q}})$, i.e. $T$ is a Weyl torus, as required.
\end{proof}
We  make the following convenient
\begin{defn}
Let $S_{\spl}$ be the set of primes $\ell$ such that $G$ is split at $\ell$.
    %\begin{enumerate}
     %   \item $H$ and $G$ are both  split at $\ell$, and,
       % \item $H^{\SC}(\mb{Z}_{\ell})\rightarrow H(\mb{Z}_{\ell})$ is surjective.
    %\end{enumerate}
Note that $S_{\spl}$ is infinite by a straightforward application of Chebotarev density.    
\end{defn}
\begin{proposition}\label{prop: indep+cadoret}
    Let $Z/\mb{F}_q$ be a smooth, geometrically connected variety, $f: Z\rightarrow \Sh$ a map with big monodromy, and fix an integer $k\geq 1$. Then there exists $N=N(f)$ such that the following holds: for any $n\geq 1$, prime numbers $N<\ell_1< \cdots< \ell_n$ with $\ell_j\in S_{\spl}$  for each $j$, and elements $x_j\in \imag (G^{\SC}(\mb{Z}/\ell_n^k)\rightarrow G(\mb{Z}/\ell_n^k))$, there exists a  closed point $z\in Z$ whose Frobenius element is conjugate to $x_j$ in $G(\mb{Z}/\ell_n^k)$ for each $j$.
\end{proposition}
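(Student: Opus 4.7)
The plan is to combine a big-monodromy/independence statement (giving control of the joint mod-$\prod\ell_j^k$ image of $\pi_1(Z_{\Fpbar})$) with Chebotarev density on a suitable finite \'etale cover of $Z$.

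First, I would identify the integer $N=N(f)$. By the results of Cadoret--Hui--Tamagawa (\cite{cadoret2017geometric}) applied to $f: Z \to \Sh$, there exists $N_0$ such that for every prime $\ell > N_0$ the geometric $\ell$-adic monodromy of $Z$ coincides with the geometric $\ell$-adic monodromy of $\Sh$; for $\ell \in S_{\spl}$ exceeding a further constant, this common group is exactly the image of the hyperspecial $G^{\SC}(\Z_\ell)$ in $G(\Q_\ell)$, and its reduction mod $\ell^k$ is precisely $\imag(G^{\SC}(\Z/\ell^k)\to G(\Z/\ell^k))$ (this uses smoothness of $G^{\SC}$ and splitness over $\Z_\ell$, making $G^{\SC}(\Z_\ell)\to G^{\SC}(\Z/\ell^k)$ surjective for $\ell$ large). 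Next, I would invoke B\"ockle--Gajda--Petersen (\cite{Bockle-Gajda-Petersen}, as recalled in \autoref{section: shim-set-up}) to obtain, after passing to a finite \'etale cover if necessary, independence of the $\ell$-adic representations: the geometric prime-to-$p$ monodromy splits as the product of the geometric $\ell$-adic monodromies. Taking $N$ larger than $N_0$ and the thresholds above, for any $N<\ell_1<\cdots<\ell_n$ in $S_{\spl}$ the joint reduction mod $\prod_j \ell_j^k$ of $\pi_1(Z_{\Fpbar})$ equals $\prod_j \imag(G^{\SC}(\Z/\ell_j^k) \to G(\Z/\ell_j^k))$.

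Having established this, the remainder is a standard Chebotarev argument on the finite quotient. Let $H\subset \prod_j G(\Z/\ell_j^k)$ denote the arithmetic image of $\pi_1(Z)$ and $H^{\geom}\subset H$ the geometric image identified above; the exact sequence $1\to \pi_1(Z_{\Fpbar})\to \pi_1(Z)\to \Gal(\Fpbar/\F_q)\to 1$ gives that $H/H^{\geom}$ is cyclic, generated by the image $\bar\Phi$ of the arithmetic Frobenius $\Frob_q$, of some finite order $m$. The mod-$\prod\ell_j^k$ representation of $\pi_1(Z)$ factors through a finite Galois \'etale cover $\widetilde Z \to Z$ with Galois group $H$. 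The target tuple $(x_1,\ldots,x_n)\in \prod_j \imag(G^{\SC}(\Z/\ell_j^k)\to G(\Z/\ell_j^k)) = H^{\geom}$ lies in the trivial coset of $H^{\geom}$ in $H$, so by Chebotarev density applied to $\widetilde Z \to Z$ there exist infinitely many closed points $z\in Z$ of degree divisible by $m$ whose Frobenius conjugacy class in $H$ equals that of $(x_1,\ldots,x_n)$; the $j$-th coordinate then provides the desired conjugacy in $G(\Z/\ell_j^k)$.

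The main obstacle is the independence step: one must know that for \emph{every} finite tuple $\ell_1<\cdots<\ell_n$ beyond a single uniform threshold $N=N(f)$, the joint geometric mod-$\prod\ell_j^k$ image has the full product form, rather than merely for each fixed $n$ with a bound depending on $n$. This is exactly the content of B\"ockle--Gajda--Petersen for the full prime-to-$p$ monodromy combined with Cadoret's uniform openness (which ensures that the $\ell$-adic monodromy of $Z$ fills out the $\ell$-adic monodromy of $\Sh$ for all $\ell>N$, with $N$ independent of the tuple). Once independence is in hand, the Chebotarev argument is routine and the coset bookkeeping poses no difficulty because of our freedom to choose the degree of the closed point $z$.
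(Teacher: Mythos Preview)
Your proposal is correct and follows essentially the same approach as the paper: invoke B\"ockle--Gajda--Petersen for independence (passing to a finite-index subgroup of $\pi_1^{\mathrm{geo}}(Z)$), invoke Cadoret's uniform maximality to get that the $\ell$-adic image is $\imag(G^{\SC}(\Z_\ell)\to G(\Z_\ell))$ for all $\ell>N$, deduce that the joint mod-$\prod\ell_j^k$ geometric image is the full product, and conclude by Chebotarev. The paper's proof is more terse---it simply notes the surjectivity of $\pi_1^{\mathrm{geo}}(Z)\to\prod_{\ell>N}\imag(G^{\SC}(\Z_\ell)\to G(\Z_\ell))$ and says ``by Chebotarev density, we deduce that this choice of $N$ suffices''---whereas you spell out the arithmetic/geometric coset bookkeeping. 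That extra care is fine but not strictly needed: since $(x_1,\ldots,x_n)$ already lies in the arithmetic image $H$, Chebotarev directly produces closed points whose Frobenius is $H$-conjugate (hence $\prod_j G(\Z/\ell_j^k)$-conjugate) to it, and the degree-divisibility condition falls out automatically.
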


\begin{proof}
%Let $\pi_1^{geo}(Z)$ denote  geometric \'etale fundamental group of $Z$.

%Since $\ell_j\in S_{\spl}$, the natural map 
   % \[
    %G^{sc}(\mb{Z}_{\ell_j})\rightarrow G(\mb{Z}_{\ell_j})
    %\]
    %is surjective, and hence by \cite[Thm. 1.2]{cadoret2017geometric}, there exists $N_1$ such that  the map 
   % \[
   % \pi_1(Z) \rightarrow G(\mb{Z}_{\ell_j})
    %\]
    %is surjective for all $\ell>N_1$.
\begin{comment}
 By \cite[Thm. 1.2]{Bockle-Gajda-Petersen}, there exists $N_2>N_1$ such that the image of 
\[
\pi_1^{geo}(Z)\rightarrow \prod_{\ell>N_2} G(\mb{Z}_{\ell})
\]
is surjective
\end{comment}
 By \cite[Thm. 1.2]{Bockle-Gajda-Petersen}, there exists a finite index subgroup $\Pi \leq \pi_1^{geo}(Z)$ such that the image of 
 \[
 \rho_{\infty}: \Pi \rightarrow \prod G(\mb{Z}_{\ell})
 \]
 is $\prod \rho_{\ell}(\Pi)$. 

 By \cite[Thm. 1.2]{cadoret2017geometric}, there exists $N$ such that $\rho_{\ell}(\Pi)=(\imag G^{\SC}(\Z_\ell) \rightarrow G(\mb{Z}_{\ell}))$ for all $\ell>N$. Therefore the representation
 \[
 \pi_1^{geo}(Z) \rightarrow \prod_{\ell>N} (\imag G^{\SC}(\Z_\ell) \rightarrow G(\mb{Z}_{\ell}))
 \]
is surjective. By Chebotarev density, we deduce  that this choice of $N$ suffices.

\end{proof}

We are now ready to prove that Weyl-special points are abundant in $V$. 

\begin{proof}[Proof of Theorem \ref{weylCMexists}]
    
    For each element $w\in W$, by an application of Chebotarev density, we can find  $\ell_w\in S_{\spl}$ satisfying  the hypothesis of Proposition \ref{prop:localtorus} for the reduction of $H^{\SC}$ mod $\ell_w$; let therefore  $T_w \subset H^{\SC}_{\F_{\ell_w}}$ be a maximal torus  output by Proposition \ref{prop:localtorus}. Let $y_w \in T_w(\F_{\ell_w})$ be a regular element. Let $x^{sc}_w \in G^{\SC}(\Z/\ell_w^2\Z)$ be a lift of $y_w$ %such that any element of $G(\Z_{\ell_w})$ reducing to $x_w$ must be non-torsion, 
    and finally let $x_w$ be the image of $x^{sc}_w$ in $G(\Z/\ell_w^2\Z)$.
    
    Further, as $y_w$ is a regular element, the centralizer of any element of $G^{\SC}(\Z_{\ell_w})$ that reduces to $x^{sc}_w$ must be a maximal torus, and therefore the mod $\ell_w$ centralizer must be $T_w$. 

    By \autoref{prop: indep+cadoret}, further increasing the $\ell_w$'s if necessary,  there exist infinitely many closed points $x\in V$ such that the action of Frobenius at $x$ on the $\ell_w$-adic Tate module of $A_x$ is conjugate to $x_w$ in $G(\Z/\ell_w^2 \Z)$, for each $w\in W$. We may also require that the action of Frobenius at $x$ is in $U$. Let $T_x \subset G$ be the $\mb{Q}$-torus defined by $A_x$. We have that the centralizer $Z_x$ of $T_x$ is a maximal torus of $G$, and is necessarily defined over $\Q$. By construction, the mod $\ell_w$ reduction of $Z_x$  contains a torus conjugate over $\F_{\ell_w}$ to $p^{sc}(T_w)$, where we recall that $p^{sc}$ denotes the map  $G^{\SC}\rightarrow H$ and, by  abuse of notation, any $\bmod \ \ell$ reduction of it. 
    
    By Proposition \ref{prop: Weyl desiderata}, we find that $Z_x$ is a Weyl torus.

    On the other hand,  we have that $T_x \subset Z_x$ is a $\Q$-rational subtorus of a Weyl subtorus. Further, we have that $T_x$ contains the center $Z(G)$. Therefore, we must have $T_x = Z(G)$ or $T_x = Z_x$. The former is impossible (as the Shimura datum induced by $T_x$ embedes in $G,X$, and so can't map to the center), and so the theorem follows. 
    
    %It follows that $T_x \bmod \ell_w$ is conjugate to $T_w$ for every $w\in W$, and therefore $T_x$ is a Weyl-torus, and therefore $x$ is a Weyl-special point. The theorem follows.
    
\end{proof}

\subsection{Tate-linearity of $V$}

We will first recall Chai's work. For an ordinary point $x\in \Sh(\mb{F}_q)$, recall that the formal completion  $\Sh^{\slash x}$ has the structure of a formal $p$-divisible group. Let  $\mathrm{Aut}(\Sh^{\slash x})$ denote the automorphism group of $\Sh^{\slash x}$ preserving the formal group structure on the latter. 

\begin{theorem}[Chai]\label{thm:chai-criterion}
    Suppose $\hat{V}\subset \Sh^{\slash x}$ is a formal subscheme stable under $\gamma \in \mathrm{Aut}(\Sh^{\slash x})$. Moreover, suppose that   $\gamma$ acts  with no invariants on $T_x\Sh^{\slash x}$. Then $\hat{V}$ is a formal subgroup of $\Sh^{\slash x}$.
\end{theorem}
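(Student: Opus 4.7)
The plan is to reduce Chai's rigidity theorem to a linearization statement in Serre--Tate coordinates. Since $x$ is ordinary, Serre--Tate theory equips $\Sh^{/x}$ with the structure of a formal $p$-divisible group $\hat{G}$; for Hodge-type Shimura varieties at ordinary points, $\hat{G}$ is an extension of an \'etale by a multiplicative-type formal group, with connected part a formal torus of some rank $n$. Choose Serre--Tate-type coordinates so that $R := \hat{\mc{O}}_{\Sh,x} \cong W(\F_q)[[X_1,\dots,X_n]]$ with formal group law coming from a cocharacter lattice $\Lambda$. The automorphism $\gamma$ corresponds to some $A \in \GL(\Lambda)$, and the induced action on $T_x\hat{G} = \Lambda \otimes W$ is via $A$; the no-invariants hypothesis becomes the statement that $A - I$ is invertible.

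I would first argue that $\hat{V}$ must pass through $x$. Since $\gamma - \mathrm{id}$ is invertible on the tangent space, a formal implicit-function argument shows that $x$ is the unique fixed point of $\gamma$ acting on the formal scheme $\hat{G}$. For any formal point $y$ of $\hat{V}$, the forward orbit $\{\gamma^{-n}(y)\}$ converges to $x$ in the formal topology, so closedness of $\hat{V}$ forces $x \in \hat{V}$. Let $I \subset R$ be the defining ideal of $\hat{V}$; by $\gamma$-stability of $\hat{V}$, the ideal $I$ is preserved by the induced ring endomorphism $\gamma^* \colon R \to R$.

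The main step is to show that any $\gamma^*$-stable ideal $I \subset \mathfrak{m}$ is \emph{group-theoretic}, i.e.\ generated by elements of the form $\chi(t) - 1$ for $\chi$ ranging over a saturated sublattice of the character lattice $\Lambda^\vee$. The strategy is induction along the filtration $I \cap \mathfrak{m}^k$: the linearized ideal $I/(I \cap \mathfrak{m}^2) \subset \mathfrak{m}/\mathfrak{m}^2$ is $A$-stable, and the invertibility of $A - I$ on each symmetric power should allow one to solve the cohomological equation $(\gamma^* - \mathrm{id})f = g$ perturbatively, correcting any initial choice of generators to genuine characters-minus-one. Once this is achieved, $\hat{V}$ is realized as the vanishing locus of a sublattice of characters, hence as a formal subgroup.

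The principal obstacle is the possible presence of \emph{resonances}, i.e.\ eigenvalue products of $A$ equal to $1$ on higher symmetric powers of $\Lambda^\vee$, which could block the perturbative correction and allow a priori non-toric $\gamma$-stable ideals. I would dispatch this by passing to sufficiently divisible iterates $\gamma^{p^N}$, whose action on $\Lambda$ becomes arbitrarily close to the identity $p$-adically while still satisfying the no-invariants condition (since $A^{p^N} - I$ remains invertible). The closed formal subgroup of $\mathrm{Aut}(\hat{G})$ generated by the orbit of $\gamma$ and its $p$-adic limits then contains a positive-dimensional formal group of automorphisms, under which $\hat{V}$ is forced to be invariant; combining this with the one-point fixed locus of $\gamma$ and a standard argument on closed formal subsets stable under a formal group action identifies $\hat{V}$ with a formal subgroup of $\hat{G}$.
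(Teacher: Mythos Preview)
The paper does not supply a proof of this statement: it is quoted as Chai's rigidity theorem with a citation to \cite{Chairigidity}, and is used as a black box in the proof of \autoref{thm:Tate-linear}. So there is no ``paper's own proof'' to compare against; what follows is an assessment of your sketch on its own merits.

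Two concrete problems stand out.

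First, your argument that $\hat V$ contains $x$ is both unnecessary and incorrect. It is unnecessary because $\Sh^{/x}=\Spf k[[X_1,\dots,X_n]]$ has a unique closed point, and any nonempty closed formal subscheme (cut out by a proper ideal $I\subset\mathfrak m$) automatically contains it. It is incorrect because your convergence claim $\gamma^{-n}(y)\to x$ presupposes a contraction, but $\gamma$ is an automorphism of the complete local ring and hence preserves the $\mathfrak m$-adic filtration; the orbit of an $R$-point for a fixed Artinian $R$ stays in the same infinitesimal neighbourhood forever.

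Second, and more seriously, your resonance-dispatch step contains a false assertion. You claim that $A^{p^N}$ becomes arbitrarily close to the identity $p$-adically while $A^{p^N}-I$ remains invertible. These two statements are mutually exclusive. In fact the no-invariants hypothesis says exactly that $A-I$ is a unit in $M_n(\Z_p)$, i.e.\ $A\not\equiv I\pmod p$; since in characteristic $p$ the only $p$-power root of unity is $1$, each eigenvalue $\lambda\neq 1$ of $A\bmod p$ satisfies $\lambda^{p^N}\neq 1$, so $A^{p^N}\not\equiv I\pmod p$ for every $N$. Thus $A^{p^N}$ is bounded \emph{away} from $I$, and the ``closed formal group of automorphisms containing the orbit'' you invoke need not be positive-dimensional nor contain anything beyond the closure of $\langle\gamma\rangle$ itself. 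The cohomological-equation/linearization heuristic in the middle of your sketch is reasonable in spirit, but without a working mechanism to kill resonances on higher symmetric powers it does not yield a proof. Chai's actual argument in \cite{Chairigidity} is substantially more delicate than a perturbative linearization.
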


We will now apply this to our setting and prove Theorem \ref{thm:Tate-linear}. 
\begin{proof}[Proof of \autoref{thm:Tate-linear}]
By  \autoref{weylCMexists}, there exists a Weyl special point $x\in V$ such that $x \in \tau(x)$ and $\tau_1(V^{/x}) \subset V^{/x}$ where $\tau_1$ is a local branch of $\tau$ at $x$. Let $\alpha \in \End(A_x)$ be the endomorphism induced by $\tau$ - we treat $\alpha$ as an element of $T_x(\Q)$. The assumption that $\tau_1(V^{\slash x}) \subset V^{\slash x}$ implies that $\alpha(V^{\slash x}) \subset V^{\slash x}$. Therefore, we have that $T'_x(\Z_p) \cdot V^{\slash x} \subset V^{\slash x}$, where $T'_x$ is the subtorus of $T_x$ generated by $\alpha$. As $\alpha$ is a $\Q$-point of $T_x$, we have that $T'_x$ is a subtorus defined over $\Q$. Further, the non-triviality of $\tau_1$ implies that the image of $T'_x$ in $G^{\ad}$ is non-trivial and therefore equals the image of $T_x$ as the latter is a Weyl-torus. Therefore, we may assume that $T'_x$ contains $T^{\der}_x := T_x \cap G^{\der}$.

By \autoref{thm:chai-criterion}, it suffices to show that the action of $T^{\der}_x$ does not have the trivial representation as a sub-representation. The action of $T^{\der}_x$ on the co-character lattice is induced by the conjugation action of $T^{\der}_x$ on the unipotent subgroup of $G$ induced by the Hodge-cocharacter associated with the canonical lift of $x$. The result follows as $T^{\der}_x$ is a maximal torus of $G^{\der}$.

\end{proof}

\subsection{Proof of \autoref{thm:maximmonodromy}}
We are now ready to prove our main theorem. The main inputs are Tate-linearity, big monodromy, D'Addezio's work on the parabolicity conjecture, and work of Chai (see also \cite{Pol}, \cite{MarcoPol}).

\begin{proof}
    The subvariety $V\subset \Sh$ has maximal prime-to-$p$ monodromy, and is Tate-linear by \autoref{thm:Tate-linear}. By \cite{companions}, the F-isocrystal $D[1/p]$ restricted to $V^{\ord}$ has geometric monodromy $G^{\der}$ in the overconvergent category, and therefore has monodromy $P\subset G^{\der}$ in the non-overconvergent category. Note that $D[1/p]|_{V^{ord}}$ has a natural slope filtration. By \cite[Theorem 1.1.1]{parabolicity}, $P$ is the parabolic induced by the slope filtration. Let $x\in V$ denote an ordinary point. By \cite{Chai} (also see \cite{Pol}), the smallest formal subtorus of $\Sh^{/x}$ containing $V^{/x}$ has rank equal to the dimension of the unipotent radical of $P$, and therefore 
is $\Sh^{/x}$ itself. The theorem now follows from the Tate-linearity of $V$. 
\end{proof}

\section{Function field formulation}
Let $F :=\F_p(t)$, $F_1 = \Fpbar(t)$, and let $\Fbar$ be an algebraic closure of $F$. As usual, let $(G, X)$ be a Shimura datum, $K$ a level subgroup which is hyperspecial at $p$, and $\ShimK$  denote the $\bmod \ p$ fiber of the associated Shimura variety. For brevity, we write $\Sh$ for $\ShimK$, and $\Sh_{\Fbar}$ its basechange to $\Fbar$. We say that a point $x\in \Shfbar(\Fbar)$ has big monodromy if it has a model over a finite extension of $F_1$ such that the associated Galois representation has big image.

\begin{theorem}\label{thm: function-field-version}
    Let $x\in \Shfbar(\Fbar)$ be a point with big monodromy, and $y_1, y_2, \cdots$ be an infinite sequence of distinct points isogenous to $x$ by prime-to-$p$ Hecke operators $\tau_1, \tau_2, \cdots$, respectively. Then the Zariski closure $V_{\Fbar}$ of $\{y_1, y_2, \cdots \}$ is a union of the connected components of  $\Shfbar$. 
\end{theorem}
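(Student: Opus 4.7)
The plan is to deduce \autoref{thm: function-field-version} from the geometric curve statement \autoref{thm: intro curve} via a spreading-out argument that transports function-field data to curves in $\Sh$ over a finite field. Since $x$ has big monodromy, it is defined over some finite extension $H$ of $F_1 = \Fpbar(t)$, and we may write $H = \kappa(B)$ for a smooth, geometrically integral curve $B$ over $\Fpbar$. Spreading out the morphism $x\colon \Spec H \to \Sh$ to some dense open $U\subset B$ and taking its scheme-theoretic image produces an irreducible curve $\tilde{C}\subset\Sh$, which, after further descent, we may assume is defined over a finite field $\mathbb{F}_q$. The big monodromy of $x$ at the generic point lifts to big geometric monodromy for $\tilde{C}$, and (taking $x$ to be ordinary, matching the implicit hypothesis of \autoref{thm: intro function field}) $\tilde{C}$ meets the ordinary locus of $\Sh$.

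Each $y_i\in\tau_i(x)$ is defined over a finite extension of $H$, since the prime-to-$p$ Hecke correspondences are finite, and the same spreading-out procedure produces an irreducible curve $\tilde{C}_i\subset\Sh$ whose generic point is the image $[y_i]$. Because $\tau_i$ is algebraic, the Hecke relation $y_i\in\tau_i(x)$ spreads out to the containment of curves $\tilde{C}_i\subset\tau_i(\tilde{C})$, where $\tilde{C}_i$ is realized as a union of irreducible components of the latter. Since for each fixed $d$ there are only finitely many prime-to-$p$ Hecke correspondences of degree $d$ and each has finite Hecke orbit on $x$, the infinitude and distinctness of the $y_i$ force $\deg\tau_i\to\infty$ along a subsequence, which we henceforth assume. \autoref{thm: intro curve} then applies to $\tilde{C}$, the sequence $\{\tau_i\}$, and the components $\tilde{C}_i\subset\tau_i(\tilde{C})$, yielding that $W:=\overline{\bigcup_i \tilde{C}_i}\subset\Sh$ is a finite union of connected components of $\Sh$. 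Base-changing, $W_{\bar{F}}$ is a finite union of connected components of $\Sh_{\bar F}$, and since each $y_i$ lies in $(\tilde{C}_i)_{\bar F}\subset W_{\bar F}$, this identifies $V_{\bar F}$ with $W_{\bar F}$ via the scheme-theoretic images of the $y_i$'s.

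The main delicate points are: (i) ensuring that each $y_i$ spreads to a genuine irreducible curve $\tilde{C}_i$ and that the Hecke relation spreads to a containment $\tilde{C}_i\subset\tau_i(\tilde{C})$ of one-dimensional subvarieties, which follows from the finite \'etale structure of the correspondence diagram $\Sh\xleftarrow{\pi_1} S_{K'}\xrightarrow{\pi_h}\Sh$ together with dimensional compatibility; (ii) tracking fields of definition so that $\tilde{C}$ and all relevant $\tilde{C}_i$ sit inside $\Sh$ over a common $\mathbb{F}_q$, which is a routine descent argument. If instead one insists on interpreting $V_{\bar F}$ as the topological Zariski closure of the $\bar F$-rational points $\{y_i\}$ in $\Sh_{\bar F}$, the extra step of showing density inside $W_{\bar F}$ becomes the hard part; it is handled by a Pink-type argument, which uses big monodromy to bound the number of $\Gal(\bar F/H)$-orbits in each $\tau_i(x)$ independently of $i$, so that the Galois orbit of $y_i$ captures a positive fraction of $\tau_i(x)$, and then by descending the Galois-closed enlargement to $\Sh_H$ and re-spreading one recovers the full curves $\tilde{C}_i$ in the closure, whence the desired density in $W_{\bar F}$.
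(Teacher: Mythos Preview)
Your spreading-out of $x$ to a curve $\tilde C \subset \Sh$ and the application of \autoref{thm: intro curve} are correct up to the point where you obtain that $W := \overline{\bigcup_i \tilde C_i}$ is a union of connected components of $\Sh$. The gap is exactly the one you flag, and your proposed fix does not close it. The closure $V_{\bar F}$ is taken inside $\Sh_{\bar F}$, where each $y_i$ is a \emph{closed point}; your sentence ``this identifies $V_{\bar F}$ with $W_{\bar F}$ via the scheme-theoretic images'' is simply false, since the scheme-theoretic image of $y_i \colon \Spec \bar F \to \Sh_{\bar F}$ is the single closed point $y_i$, not the curve $(\tilde C_i)_{\bar F}$. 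To see that the implication $W=\Sh \Rightarrow V_{\bar F}=\Sh_{\bar F}$ genuinely fails at the level of your argument, take $\Sh = \mathbb{A}^2_{\bar{\mathbb{F}}_p}$ with coordinates $(u,v)$, $H = \bar{\mathbb{F}}_p(t)$, and $y_i = (t,\, i\cdot t)$ for $i \in \bar{\mathbb{F}}_p$: then $\tilde C_i = \{v = i u\}$ and $\overline{\bigcup_i \tilde C_i} = \mathbb{A}^2$, yet every $y_i$ lies on the line $\{u = t\} \subset \mathbb{A}^2_{\bar F}$, so $V_{\bar F}$ is this proper curve. Your Pink-type rescue also fails here: each $y_i$ is already $H$-rational, so its Galois orbit is a singleton, and ``re-spreading'' the Galois-closed $V_{\bar F}$ over $B = \mathbb{A}^1_t$ yields fibers $\{u = b\}$, which do not contain the $\tilde C_i$.

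The paper's proof proceeds along a genuinely different line and does \emph{not} reduce to \autoref{thm: intro curve}. One first shows that $V_{\bar F}$ itself has big geometric monodromy (\autoref{prop: v-big-monodromy}), and then proves function-field analogues of the degree and field-of-definition bounds of \autoref{prop:key} for the components $V_{i,0,\bar F} \subset \tau_i^\vee(V_{\bar F})$ containing $x$ (\autoref{prop: key-function-field}); this step uses in an essential way that the arithmetic monodromy of $x$ controls the Galois orbits on $\pi_h^{-1}(x)$. One then spreads out $V_{\bar F}$ (not the individual $y_i$) to a family $V_C \subset \Sh \times C$ over a curve $C/\mathbb{F}_q$, and specializes to a closed fiber $V_c \subset \Sh$ at some $c \in C(\mathbb{F}_q)$. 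The bounds on degree and field of definition descend to the $V_{i,0,c}$, and pigeonhole produces a nontrivial Hecke correspondence fixing $V_c$. At this point one invokes the Tate-linearity machinery of Section~4 (i.e.\ \autoref{thm:Tate-linear} together with the argument proving \autoref{thm:maximmonodromy}) directly on $V_c$ to conclude that $V_c$ is a full connected component of $\Sh$; comparing dimensions then forces $V_{\bar F}$ to be a component of $\Sh_{\bar F}$. In short, the argument re-runs Sections~3--4 for the unknown $V_{\bar F}$ and its specialization, rather than attempting to read off density from the curve theorem applied to $\tilde C$.
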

With notation as in \autoref{thm: function-field-version}, for each $i$, let $V_{i,0, \Fbar}$ be the irreducible component of $\tau_i(V_{\Fbar})$ containing $x$.

\begin{proposition}\label{prop: v-big-monodromy}
    In the notation of \autoref{thm: function-field-version}, $V_{\Fbar}$ has big monodromy.
\end{proposition}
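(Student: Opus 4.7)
The plan is to show that every $y_i$ inherits big monodromy from $x$ and then transfer this pointwise information to $V_{\Fbar}$ via a specialization argument on a model.

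First I would verify that each $y_i$ inherits big monodromy from $x$. Since $y_i = \tau_i(x)$ for a prime-to-$p$ Hecke correspondence $\tau_i$, the abelian varieties $A_x$ and $A_{y_i}$ are prime-to-$p$ isogenous over some common finitely generated field of definition $H_i \subset \Fbar$. For every $\ell \neq p$, this isogeny induces a $G_{H_i}$-equivariant $\Ql$-linear isomorphism $T_\ell(A_x) \otimes \Ql \cong T_\ell(A_{y_i}) \otimes \Ql$, so the images of $\rho_\ell|_{G_{H_i}}$ at $x$ and at $y_i$ are conjugate inside $G(\Ql)$. The big monodromy of $x$ therefore passes to every $y_i$.

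Next I would apply the specialization argument on a model of $V_{\Fbar}$. Pick an irreducible component $V_0 \subset V_{\Fbar}$ containing infinitely many $y_i$ and descend it to a model $V_{0,H}$ over a finitely generated extension $F_1 \subset H \subset \Fbar$ chosen large enough that some $y_{i_0}$ is an $H$-rational point of the smooth locus $V_{0,H}^{\mathrm{sm}}$. This $H$-point induces a section $s\colon G_H \to \pi_1^{\et}(V_{0,H}^{\mathrm{sm}})$ with $\rho_\ell \circ s = \rho_\ell|_{y_{i_0}}$, whose image is Zariski dense in $G^{\der}(\Ql)$ by the previous step. Hence the arithmetic $\ell$-adic monodromy of $V_{0,H}^{\mathrm{sm}}$ is Zariski dense in $G^{\der}(\Ql)$.

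Finally I would upgrade arithmetic to geometric monodromy. Writing $M_{\mathrm{geo}}$ for the image of $\pi_1^{\et}(V_{0,H}^{\mathrm{sm}} \otimes_H \bar H)$ under $\rho_\ell$, the subgroup $M_{\mathrm{geo}}$ is normal in the arithmetic image, so its Zariski closure is a $\Ql$-algebraic normal subgroup of $G^{\der}_{\Ql}$. For two distinct $y_i, y_j \in V_0(H)$, the composite prime-to-$p$ isogeny $A_{y_i} \dashrightarrow A_{y_j}$ yields $g_{ij} \in G(\Ql)$ conjugating $\rho_\ell|_{y_i}$ to $\rho_\ell|_{y_j}$, and comparing the two corresponding sections $s_i, s_j$ of $\pi_1^{\et}(V_{0,H}^{\mathrm{sm}}) \to G_H$ gives the cocycle identity
\[
\rho_\ell|_{y_j}(\sigma)\,\rho_\ell|_{y_i}(\sigma)^{-1} \;=\; [g_{ij},\,\rho_\ell|_{y_i}(\sigma)] \;\in\; M_{\mathrm{geo}} \qquad \text{for all } \sigma \in G_H.
\]
Zariski-closing in $\sigma$ shows that the closure of $M_{\mathrm{geo}}$ contains the commutator set $[g_{ij}, G^{\der}(\Ql)]$. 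Since the $y_i$ are distinct and the Hecke correspondences $\tau_i$ are non-trivial, some $g_{ij}$ is non-central; invoking the $\Q$-simplicity of $G^{\der}$ and the classification of normal $\Ql$-algebraic subgroups of $G^{\der}_{\Ql}$ (whose $\Ql$-simple factors are Galois-permuted transitively) then forces $M_{\mathrm{geo}}$ to be Zariski dense in $G^{\der}_{\Ql}$.

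The hard part will be this final step. Specialization at a single rational point only witnesses the arithmetic monodromy via its splitting of the homotopy exact sequence $1 \to \pi_1^{\et}(V_{0,H}^{\mathrm{sm}} \otimes_H \bar H) \to \pi_1^{\et}(V_{0,H}^{\mathrm{sm}}) \to G_H \to 1$, and one genuinely needs the infinite supply of distinct $y_i$ — via the cocycle identity above — combined with $\Q$-simplicity of $G^{\der}$ to rule out $M_{\mathrm{geo}}$ dropping to a proper $\Ql$-algebraic normal subgroup of $G^{\der}_{\Ql}$.
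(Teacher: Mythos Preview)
Your first two steps are exactly the paper's argument: descend $V_{\Fbar}$ and one $y_i$ to a finitely generated extension of $F_1$, and use that the Galois representation at $y_i$ (being isogenous to $x$) already witnesses Zariski-dense \emph{arithmetic} monodromy for $V$.

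The divergence is in the upgrade from arithmetic to geometric monodromy. The paper's proof here is much shorter and takes a different route: it simply notes that $\overline{M_{\mathrm{geo}}}$ is a normal algebraic subgroup of $G^{\der}_{\Ql}$ and invokes $\Q$-simplicity of $G^{\der}$ to assert a dichotomy (all of $G^{\der}$ or trivial), then rules out the trivial case by observing it would force $V$ to be zero-dimensional, in contradiction with \autoref{thm: intro curve}. In other words, the paper leans on the main geometric theorem already proved, rather than arguing ab initio.

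Your cocycle argument, by contrast, has a gap. The identity $[g_{ij},(\rho_\ell\!\circ\! s_i)(\sigma)]\in M_{\mathrm{geo}}$ is correct, and passing to Zariski closures does give $[g_{ij},G^{\der}]\subset\overline{M_{\mathrm{geo}}}$. But this containment carries information only if $g_{ij}$ lies \emph{outside} $\overline{M_{\mathrm{geo}}}$; when $g_{ij}\in\overline{M_{\mathrm{geo}}}$ it follows automatically from normality. You assert that ``some $g_{ij}$ is non-central'', yet (i) non-central in $G$ is not the relevant condition---you need non-central modulo $\overline{M_{\mathrm{geo}}}$; and (ii) even non-centrality is unjustified: if every $\tau_i$ is supported at primes $\ell'\neq\ell$, the isogeny acts trivially on $\ell$-adic Tate modules, and the element conjugating $\rho_\ell\!\circ\! s_i$ to $\rho_\ell\!\circ\! s_j$ is then a pure path contribution, hence already in $M_{\mathrm{geo}}$. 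Finally, your closing appeal to $\Q$-simplicity does not do what you want over $\Ql$: when $G^{\der}$ is $\Q$-simple, $G^{\der}_{\Ql}$ typically factors as a product of several $\Ql$-simple groups, and any subproduct is a legitimate normal $\Ql$-subgroup---nothing in your argument excludes $\overline{M_{\mathrm{geo}}}$ being such an intermediate factor. One still needs either an $\ell$-independence input or the paper's shortcut through \autoref{thm: intro curve}.
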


\begin{proof}
    There is a finite extension $\widetilde{F}/F_1$ such that $V_{\Fbar}$ and $y_1\in V_{\Fbar}(\Fbar)$ both descend to $\widetilde{F}$: let us call these $V_{\widetilde{F}}$ and $y_{1, \widetilde{F}}\in V_{\widetilde{F}}(\widetilde{F})$ respectively. Since $x$ has big monodromy by assumption, the Galois representation of $G_{\widetilde{F}}$ attached to $y_{1, \widetilde{F}}$ has big image. 

    Since $G$ is $\mb{Q}$-simple by assumption, the geometric monodromy of $V_{\widetilde{F}}$ either has the same Zariski closure as the monodromy of $V_{\widetilde{F}}$, or is trivial. The latter isn't possible as that would imply that $V$ is zero-dimensional, which contradicts Theorem \ref{thm: intro curve}. 
\end{proof}

In order to prove \autoref{thm: function-field-version}, it suffices to work in the following setting. %Let $E/F$ be a finite extension.         

\begin{proposition}\label{prop: key-function-field}
    \begin{enumerate}
        %\item $V= \tau_i(V_{i,0})$;
    \item \label{item-big-monodromy}
    
    The field of definition of $V_{i,0, \Fbar}$ is bounded independently of $i$. Moreover, $V_{i,0, \Fbar}$ has big (geometric) monodromy with index bounded independently of $i$.

    \item \label{item-deg-bounded} $\deg(V_{i,0, \Fbar})$ is bounded independently of $i$;
   
    \end{enumerate} 
\end{proposition}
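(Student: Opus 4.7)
The plan is to adapt the proof of \autoref{prop:key} to the function field setting, with \autoref{prop: v-big-monodromy} playing the role of the maximal monodromy hypothesis on $C$ from \autoref{setup: curve-hecke-closure}. The main conceptual complication is that $V_{\Fbar}$ contains only a point $x$ with big monodromy rather than a positive-dimensional subvariety of maximal monodromy, so the big monodromy of $V_{i,0,\Fbar}$ cannot be read off by inclusion but must be argued through the arithmetic fundamental group of a suitable descent $V_{i,0,\widetilde{F}_i'}$.

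For item~(\ref{item-big-monodromy}): by \autoref{prop: v-big-monodromy}, the image of $\pi_1(V_{\Fbar})$ in $K^{\SC}$ has finite index $M$. The number of irreducible components of $\tau_i(V_{\Fbar})$ is then bounded by $M$ via the standard orbit-counting argument appearing in the proof of \autoref{cor: can reduce level structure to get hecke irreducible}, and the Galois action permutes these components, yielding a field of definition of degree at most $M$ for $V_{i,0,\Fbar}$. For the big monodromy of $V_{i,0,\Fbar}$: descend $V_{i,0,\Fbar}$ to $\widetilde{F}_i$ and further to $\widetilde{F}_i'$ over which $x$ is rational, both of bounded degree over $\widetilde{F}$. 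The splitting $G_{\widetilde{F}_i'} \to \pi_1(V_{i,0,\widetilde{F}_i'})$ induced by the section $x$ shows the arithmetic monodromy of $V_{i,0,\widetilde{F}_i'}$ contains the Galois monodromy of $x$ restricted to $G_{\widetilde{F}_i'}$, hence has index in $K^{\SC}$ uniformly bounded by some constant $M'$. The geometric monodromy of $V_{i,0,\Fbar}$ is then Zariski dense by the $\Q$-simplicity argument of \autoref{prop: v-big-monodromy}, with the trivial-monodromy alternative excluded by the positive-dimensionality of $V_{i,0,\Fbar}$. The uniform bound on its index in $K^{\SC}$ will follow by comparing it to the arithmetic monodromy via the common étale cover $W_{i,0}$ used below.

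For item~(\ref{item-deg-bounded}): let $W_{i,0} \subset \widetilde{\Sh}$ and $n_1, n_2$ be as in the proof of \autoref{prop:key}, so that $n_1\deg(V_{\Fbar}) = n_2\deg(V_{i,0,\Fbar})$. I would apply \autoref{lemma:degree_cover} twice. First, with the cover $\pi_1$ and the big monodromy of $V_{\Fbar}$, one has $n_1 \leq \deg(\tau_i)$ trivially. Second, with the cover $\pi_2$ and the arithmetic monodromy of $V_{i,0,\widetilde{F}_i'}$ of uniformly bounded index $M'$, the arithmetic degree of the $\widetilde{F}_i'$-component of $\pi_2^{-1}(V_{i,0,\widetilde{F}_i'})$ containing $W_{i,0}$ is at least $\deg(\tau_i)/M'$; this arithmetic component has at most $M$ geometric components over $\Fbar$ in its Galois orbit, giving $n_2 \geq \deg(\tau_i)/(MM')$. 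The degree formula then yields $\deg(V_{i,0,\Fbar}) \leq MM'\deg(V_{\Fbar})$, uniformly in $i$. The quantitative index bound in item~(\ref{item-big-monodromy}) now follows from the relation $[K^{\SC}:\pi_1(V_{i,0,\Fbar})^{\mathrm{im}}] \leq Mn_1/n_2$, which comes from observing that the two images of $\pi_1(W_{i,0})$ in $K^{\SC}$, obtained via $\pi_1$ and via $\pi_2$, differ by conjugation by the Hecke element and hence have the same index in $K^{\SC}$.

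The hard part will be the careful bookkeeping of arithmetic versus geometric fundamental groups under descent to $\widetilde{F}_i'$ and under the Hecke identification on the correspondence cover, including controlling the size of Galois orbits on geometric components of $\pi_2^{-1}(V_{i,0,\widetilde{F}_i'})$; modulo this technical care, the argument closely parallels \autoref{prop:key}.
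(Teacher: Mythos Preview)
Your approach is correct and reaches the same conclusion, but it is organized differently from the paper's proof, and it is worth noting the contrast.

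The paper first proves part~(\ref{item-big-monodromy}) in full, including the bounded index of the geometric monodromy, and only then deduces~(\ref{item-deg-bounded}) exactly as in \autoref{prop:key}. The key technical step is Claim~5.4: after fixing a finite extension $E_1$ over which \emph{all} geometric components of $\pi_1^{-1}(V_E)$ in the correspondence variety are defined (this uses the big monodromy of $V$ together with \autoref{cor: can reduce level structure to get hecke irreducible}), one shows that each $\pi_1(V_{i,0,\Fbar})$-orbit on the fiber $\pi_h^{-1}(x)$ is a union of $\Gal_{E_1}$-orbits. Since $x$ has big arithmetic monodromy, those $\Gal_{E_1}$-orbits are large, hence $\deg(\pi_h|_{W_i}) \geq \kappa_2 d$, and a short index calculation gives the geometric monodromy bound directly.

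Your route inverts this: you first bound the arithmetic monodromy of $V_{i,0}$ via the section $x$, then bound $n_2$ by combining this with the fact that the Galois orbit of $W_{i,0}$ lies among the at most $M$ geometric components of $\pi_1^{-1}(V_{\Fbar})$, and finally extract the geometric monodromy bound from the formula $[K^{\SC}:\pi_1(V_{i,0,\Fbar})] = M n_1/n_2$. This works, and the ``hard part'' you identify (controlling Galois orbits on geometric components of the correspondence cover) is precisely the content of the paper's Claim~5.4; the two arguments use the same pair of inputs (big arithmetic monodromy at $x$, big geometric monodromy of $V$) in a different order. Two points deserve care in your write-up: (i) the bound ``at most $M$ geometric components'' for the arithmetic $\widetilde{F}_i'$-component containing $W_{i,0}$ should be justified by noting that this Galois orbit consists of geometric components of $\pi_1^{-1}(V_{\Fbar})$, which is $\Gal_{\widetilde{F}_i'}$-stable and has at most $M$ components; (ii) your claim that the two images of $\pi_1(W_{i,0})$ in $K^{\SC}$ via $\pi_1$ and $\pi_h$ have the same index is correct, but since the conjugating element $h$ need not normalize $K^{\SC}$, the argument requires unimodularity of $G(\A^{f,p})$ as in \autoref{prop: hecke same degree}.
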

\begin{proof}
We first prove (\ref{item-big-monodromy}).    We may take a finite extension $E/F$ such that $\Sh_{\Fbar}, x, V_{\Fbar},$ and all Hecke correspondences are defined over $E$; in the following, we use the subscripts $E$ to refer to such descents to $E$, and similarly for finite extensions of $E$. Now, for each $i$, consider the Hecke correspondence $\tau_i^{\vee}$, which over $E$ is given by a diagram 
    \begin{equation}\label{diagram: hecke-function-field}
\begin{tikzcd}[column sep=small]
& S_{K_i}(G, X)_E=: \Sh_{K_i, E} \arrow[dl, "\pi_1"'] \arrow[dr, "\pi_h= \iota \circ \pi_2"] & \\
  S_K(G, X)_{E}=\Sh_{E}  &                         & S_K(G, X)_{E}=\Sh_{E}.
\end{tikzcd}
\end{equation}
As $V_{E}$ has big geometric monodromy, we claim  that there exists a finite extension $E_1/E$, independent of $i$, such that each irreducible component  of $\pi_1^{-1}(V_E)$ is defined over $E_1$. Indeed, there is a finite index $K'\subset K$ for which  there is a map 
\[
\iota: V_{\Fbar}\rightarrow S_{K'}(G, X)_{\Fbar}
\]
lifting the inclusion $V_{\Fbar}\subset \ShimK$, and 
whose image has maximal monodromy; let $E_1'$ be a field of definition of $\iota$ and  take $E_1$ to be the compositum of $E$ and $E_1'$. By \autoref{cor: can reduce level structure to get hecke irreducible}, there is a further finite index subgroup $K''\subset K$, with associated map 
\[
\pi_{K''}: S_{K''}(G, X)_{E_1}\rightarrow S_{K'}(G, X)_{E_1}
\]
such that for any $h\in G(\mb{A}^{f,p})$, we have that $\tau_h^{K''}(\pi_{K''}^{-1}(\iota(V_{\Fbar})))$ is irreducible, and hence defined over $E_1$; this implies that the original $\tau_h^{K}(V_{\Fbar})$ are also defined over $E_1$, as claimed.

Let $W_{i, E_1}$ be a geometrically irreducible component of $\pi_1^{-1}(V_E)$ which maps to $V_{i,0,\Fbar}$; this gives a descent of the latter to $E_1$, which we denote by $V_{i,0, E_1}$. This proves the first claim of (\ref{item-big-monodromy}).

To summarize, we obtain a commutative  diagram, defined over $E_1$: 
\[
\begin{tikzcd}
  W_{i, E_1} \arrow[r] \arrow[d]
    & \Sh_{K_i, E_1}\arrow[d, "\pi_h"] \\
  V_{i, 0, E_1} \arrow[r]
&  \Sh_{E_1}. 
\end{tikzcd}
\]
\begin{claim}\label{claim: big-deg-corr-function-field}
   Let $d=\deg \pi_h$. Then there exists a constant $\kappa_2>0$, independent of $i$, such that $\deg(\pi_h|_{W_{i, \Fbar}})\geq \kappa_2 d$.
\end{claim}
We first  show how to conclude the proof of (\ref{item-big-monodromy}), assuming this claim. Suppose that $\pi_h^{-1}(x)=\{x_1, \cdots, x_d\}$, so that we have a natural action of $\pi_1(\Sh_{\Fbar})$ on the latter, whose kernel is precisely $\pi_1(\Sh_{K_i, \Fbar})$. Wlog let $\{x_1, \cdots, x_{j}\}$ be the subset of those points contained in $W_{i, \Fbar}$, and we have a natural action of $\pi_1(V_{i,0, \Fbar})$ on this set, whose kernel is the image of $\pi_1(W_{i, \Fbar})$. We summarize this in the diagram
\[
\begin{tikzcd}
   \pi_1(W_{i, \Fbar}) \arrow[d]
    & \acts & \{x_1, \cdots, x_j\} \arrow[d, hook] \\
  \pi_1(V_{i, 0, \Fbar}) & \acts
&  \{x_1, \cdots, x_d\}. 
\end{tikzcd}
\]
Let $\kappa_1=[\pi_1(\Sh_{\Fbar}): \pi_1(V_{\Fbar})]$. Then 
\begin{align}
[\pi_1(\Sh_{\Fbar}): \pi_1(V_{i, 0, \Fbar})] 
&= \frac{[\pi_1(\Sh_{\Fbar}): \pi_1(W_{i, \Fbar})]}{j} \\
&\leq \kappa_1 \frac{[\pi_1(\Sh_{\Fbar}): \pi_1(\Sh_{K_i, \Fbar})]}{j} \\
& \leq \kappa_1\frac{d}{\kappa_2 d},
\end{align}
where the last inequality follows from \autoref{claim: big-deg-corr-function-field},
and so we are done.

\begin{proof}[Proof of \autoref{claim: big-deg-corr-function-field}]
    For a group $H$ acting on a finite set of size $d$, and a constant $\kappa>0$, we say that the action  is $\kappa$-big if every orbit has size $\geq \kappa d$. 
    
    In this language, it suffices to show that the action of $\pi_1(V_{i, 0, \Fbar})$ on $\{x_1, \cdots, x_d\}$ is $\kappa_2$-big, for some $\kappa_2>0$ independent of $i$, as $\{x_1, \cdots, x_j\}$ is a $\pi_1(V_{i, 0, \Fbar})$-orbit.

The action of $\Gal_{E_1}$ on $\{x_1, \cdots, x_d\}$ is manifestly $\kappa_2$-big for some $\kappa_2$ independent of $i$, since $x$ was assumed to have big monodromy. Since each geometric irreducible component of $\widetilde{V_{i, 0, E_1}}$ is defined over $E_1$, each $\pi_1(V_{i, 0, \Fbar})$-orbit  on $\{x_1, \cdots, x_d\}$ is a union of $\Gal_{E_1}$-orbits, so we are done.

\end{proof}

This concludes the proof of (\ref{item-big-monodromy}). The proof of (\ref{item-deg-bounded}) is identical to that given in the proof of \autoref{prop:key}.
    %We have the point  $x_{E_1}\in V_{i,0, E_1}(E_1)$, gotten by basechanging $x_E$; note that $x_{E_1}$ has big monodromy with index bounded independently of $i$. 
\end{proof}

\begin{proof}[Proof of \autoref{thm: function-field-version}]
    %Let $V_{\Fbar}$ be an irreducible component of the Zariski closure of $y_1, y_2, \cdots$. 

%    For each $i$, let $V_{i,0}\subset \Shfbar$ be an irreducible component of $\tau_i(V)$ which contains $y_i$. 

    Spreading out, we may find 
    \begin{itemize}
        \item a smooth, geometrically connected curve $C/\mb{F}_q$, with an embedding $\iota:  \mb{F}_q(C)\xhookrightarrow{} \Fbar$, and a map 
        \[
        x_C: C\rightarrow \Sh
        \]
        whose generic point, composed with  $\iota$, recovers $x$, 
        \item  $V_{C}$ an irreducible subscheme of $\Sh \times C$ whose basechange to $\Shfbar$ recovers $V$,
        \item for each $i$, a subscheme  $V_{i,0, C}\subset \Sh \times C$ which is a component of $\tau_i(V_C)$, and whose basechange to $\Shfbar$ recovers $V_{i,0}$. 
    
    \end{itemize}

By replacing $K$ with a finite index subgroup $K'$, we may assume that $V$ has maximal geometric monodromy.
By \autoref{prop: key-function-field}(\ref{item-deg-bounded}), the degrees of $V_{i,0}$ are absolutely bounded, and hence the same is true of degrees of $V_{i,0, c}$ for any closed point $c\in C$. Here, define  $V_{i,0, c}$ via the Cartesian square: 
\[
\begin{tikzcd}
  V_{i,0, c} \arrow[r] \arrow[d]
    & V_{i,0, C} \arrow[d] \\
  \Sh \times c \arrow[r]
&  \Sh \times C. 
\end{tikzcd}
\]

Increasing  $q$ if necessary, we may assume that $C$ has a point $c\in C(\mb{F}_q)$. Further, by Proposition \ref{prop: v-big-monodromy}, we have that $V_{c}$ also has maximal monodromy. Then, as follows from the corresponding statements for the subschemes $V_{i,0,C}$'s, the $V_{i,0,c}$ are defined over $\mb{F}_q$ and have bounded degrees. Hence we find $i, j$ such that $V_{i,0,c}=V_{j,0,c}$. As in the setting of \autoref{thm: intro curve}, we have a sequence of non-trivial Hecke correspondences which fix the subvariety $V_c$. We now directly apply \autoref{thm:Tate-linear} to conclude.

\end{proof}

\bibliographystyle{alpha}
\bibliography{biblio}

\end{document}